\newtheorem{theorem}{Theorem}[section]
\newtheorem{lemma}[theorem]{Lemma}
\newtheorem{corollary}[theorem]{Corollary}
\newtheorem{definition}[theorem]{Definition}
\newtheorem{fact}[theorem]{Fact}
\newtheorem*{theorem*}{Theorem}
\newtheorem{claim}{Claim}
\newcommand{\forceP}{\mathbb{P}}
\newcommand{\forceQ}{\mathbb{Q}}
\newcommand{\forceR}{\mathbb{R}}
\newcommand{\ZFP}{\mathsf{ZF}^-}
\newcommand{\CH}{\mathsf{CH}}
\def\undertilde#1{\mathord{\vtop{\ialign{##\crcr
$\hfil\displaystyle{#1}\hfil$\crcr\noalign{\kern1.5pt\nointerlineskip}
$\hfil\tilde{}\hfil$\crcr\noalign{\kern1.5pt}}}}}
\title{A Universe with a $\Delta^1_n$-definable well-order of the reals, $\CH$ and $\Pi^1_n$-Uniformization}
\author{ Stefan Hoffelner$^{1}$\footnote{This research was funded in whole by the Austrian Science Fund (FWF) Grant-DOI 10.55776/P37228. }  }
\date{
    $^1$TU Wien \\
    \today }
\begin{document}

\maketitle

\begin{abstract}
This paper constructs a universe where $\Pi^1_3$-uniformization is true, the Continuum Hypothesis holds yet it possesses a $\Delta^1_3$-definable well-order of its reals. The method can be lifted to canonical inner models with finitely many Woodin cardinals to produce universes of $\mathsf{CH}$, $\Pi^1_n$-uniformization and where additionally a $\Delta^1_n$-definable well-order of the reals exist.
\end{abstract}

\section{Introduction}

This paper explores the interaction between two important properties of the real numbers: the $\Pi^1_n$-uniformization property and the $\Delta^1_n$-definable well-orderings of the reals.

To recall, for a set \( A \subset \omega^\omega \times \omega^\omega \), we say that a function \( f \) is a uniformization (or uniformizing function) of \( A \) if \( f \) is a partial function \( f: \omega^\omega \to \omega^\omega \), with the domain of \( f \) being \( \text{pr}_1(A) \), and the graph of \( f \) being a subset of \( A \).

\begin{definition}
We say that a projective pointclass \( \Gamma \in \{ \Sigma^1_n, \Pi^1_n \mid n \in \omega \} \) has the uniformization property if every element of \( \Gamma \) has a uniformization that also belongs to \( \Gamma \).
\end{definition}

J. Addison observed in \cite{Addison} that a good  well-order of the reals, definable by a \( \Delta^1_n \)-formula, implies the \( \Sigma^1_m \)-uniformization property for every \( m \ge n \). A \( \Delta^1_n \)-definable well-order \( < \) of the reals is considered a \emph{good} \( \Delta^1_n \)-well-order if its order type is \( \omega_1 \), and the relation \( <_I \subset (\omega^\omega)^2 \) is defined as follows:
\[
x <_I y \Leftrightarrow \{(x)_n \mid n \in \omega\} = \{z \mid z < y\},
\]
where \( (x)_n \) represents a recursive partition of \( x \) into \( \omega \)-many reals, and this relation is \( \Delta^1_n \)-definable.

Novikov's result shows that uniformization cannot hold for both \( \Sigma^1_n \) and \( \Pi^1_n \) simultaneously for any \( n \in \omega \). Specifically, \( \Pi^1_n \)-uniformization and a good \( \Delta^1_n \)-definable well-order cannot coexist. It is natural to ask whether it is consistent with \( \mathsf{CH} \) that a \( \Delta^1_n \)-definable well-order of the reals can be combined with the \( \Pi^1_n \)-uniformization property. This paper demonstrates that it is indeed possible.

\begin{theorem*}
There exists a generic extension of \( L \) in which there is a \( \Delta^1_3 \)-definable well-order of the reals, \( \mathsf{CH} \) holds, and the \( \Pi^1_3 \)-uniformization property holds.
\end{theorem*}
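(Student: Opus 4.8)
The plan is to realize the model as a countable-support iteration $\forceP=\langle\forceP_\alpha,\dot{\forceQ}_\alpha:\alpha<\omega_1\rangle$ over $L$ whose iterands are all proper forcings of size $\aleph_1$. Such an iteration has size $\aleph_1$, preserves $\omega_1$, adds no new reals at limit stages of uncountable cofinality, and therefore preserves $\CH$; in particular every real of the final model $W=L^{\forceP}$ already occurs at some countable stage, which is what makes it possible to ``catch our tail''. In $L$ we fix a bookkeeping function which at each stage $\alpha<\omega_1$ presents a task — either an \emph{enumeration task}, consisting of a real $x$ that has appeared so far, or a \emph{uniformization task}, consisting of a pair $(\varphi,r)$ with $\varphi$ a $\Pi^1_3$-formula and $r$ a real that has appeared so far, with associated set $A_{\varphi,r}=\{(x,y):\varphi(x,y,r)\}$ — in such a way that each task recurs cofinally often. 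Against the backdrop of a fixed $L$-definable coding apparatus on $\omega_1$ (a canonical diamond-sequence plus an $L$-definable partition of $\omega_1$ into $\aleph_1$ stationary ``slots''), an enumeration task is answered by a localization/almost-disjoint coding forcing that writes $x$ into the next free slot, and a uniformization task reserves an $\omega_1$-block of slots into which we gradually code, over all stages at which the task recurs, a candidate uniformizing function $g_{\varphi,r}$; club-shooting is inserted where needed to keep the unused slots stationary.

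In $W$ one reads off the two objects. The well-order is defined by declaring $x<y$ iff some countable transitive model of a sufficient fragment of $\ZFP$, correct about the apparatus and containing the ordinal labels of $x$ and $y$, sees $x$ enumerated before $y$; the standard David trick makes the correctness clause $\Pi^1_2$, so $<$ is $\Sigma^1_3$, and, being a strict linear order, it is then automatically $\Delta^1_3$. (By the theorems of Addison and Novikov recalled above, it cannot be a \emph{good} $\Delta^1_3$-well-order; here the induced relation $<_I$ simply fails to be $\Delta^1_3$, and no extra work is needed to secure this.) For a $\Pi^1_3$-set $A_{\varphi,r}$, the uniformizing function is given by $f_{\varphi,r}(x)=y$ iff $y$ is the value coded for $x$ in the block of $g_{\varphi,r}$ — a $\Delta^1_3$ clause by the same locality — and $\varphi(x,y,r)$ holds, which is $\Pi^1_3$; the conjunction is $\Pi^1_3$. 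Hence, provided each $f_{\varphi,r}$ is genuinely total on the domain $\text{pr}_1(A_{\varphi,r})$ as computed in $W$, every $\Pi^1_3$-set of $W$ has a $\Pi^1_3$-uniformization.

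That proviso is the main obstacle. Because $\Pi^1_3=\forall^{\mathbb{R}}\Sigma^1_2$ is only downward absolute under forcing, a value coded for $x$ at a bounded stage $\beta$, witnessing $\varphi(x,y,r)$ in $L^{\forceP_\beta}$, may be falsified by reals added later, and $x$ may only enter $\text{pr}_1(A_{\varphi,r})$ at a late stage; thus the construction of $g_{\varphi,r}$ must be arranged so that its eventual values still witness $\varphi$ in $W$. This is handled by revisiting each uniformization task cofinally often and ``freezing'' a coded value only when an analysis of the relevant tail forcing guarantees that the corresponding $\Pi^1_3$-fact persists all the way to $W$ — roughly, a $\Pi^1_3$-level analogue of the correctness arguments familiar from constructions of definable well-orders over $L$. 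One must simultaneously check that the coding and club-shooting iterands are proper, so that $\omega_1$, the stationarity of the unused slots, and $\CH$ are preserved; this properness analysis, together with the persistence argument, is where the real work lies. Once these are in place, the verification that $W\models\CH$, carries a $\Delta^1_3$-well-order of its reals, and satisfies $\Pi^1_3$-uniformization is routine. Finally, the scheme lifts from $L$ to the canonical inner model $M_{n-3}$ with $n-3$ Woodin cardinals (so $L=M_0$ for $n=3$): Shoenfield $\Sigma^1_2$-absoluteness is replaced by the $\Sigma^1_{n-1}$-correctness of that mouse and all pointclasses are shifted up correspondingly, yielding a generic extension with $\CH$, a $\Delta^1_n$-well-order of the reals, and $\Pi^1_n$-uniformization.
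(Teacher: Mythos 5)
Your overall architecture (countable support iteration over $L$ of proper $\aleph_1$-sized forcings guided by bookkeeping, an $L$-definable coding apparatus on $\omega_1$, David's trick to localize the coding statements to $\Pi^1_2$, and a $\Sigma^1_3$/$\Delta^1_3$ read-off in the final model) matches the paper, and your well-order clause is essentially the paper's. But the proposal has a genuine gap exactly at the point you defer to ``an analysis of the relevant tail forcing.'' The paper's central device is a transfinite \emph{derivative} on the class of permitted iterations ($\alpha$-allowability): a candidate value $(x,y_0)$ may be frozen at stage $\beta$ only if no further $\zeta$-allowable extension of $\forceP_\beta$ forces $(x,y_0)\notin A_m$, and the whole construction only becomes coherent because this hierarchy of notions stabilizes at some ordinal $\alpha_0$ (since there are only set-many allowable forcings), after which one iterates with $\alpha_0$-allowable steps. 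Without some such fixed-point mechanism, ``freeze when persistence is guaranteed'' is circular: persistence is relative to the class of future forcings, which is itself being defined by the freezing rule. Moreover, you do not address the complementary case: if for some $x$ \emph{every} candidate $y$ can be forced out of $A_m$ by a permitted extension, then no value can ever be frozen, and the construction must actively arrange (as in the paper's case (2), using upward $\Sigma^1_3$-absoluteness) that the $x$-section of $A_m$ is empty in the final model. This dichotomy is the main lemma of the paper and is absent from your argument.

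There is also a definability error in your uniformization clause. You code the \emph{chosen} value $y$ positively and set $f(x)=y$ iff ``$y$ is coded for $x$'' and $\varphi(x,y,r)$. The coding clause, read off via David's trick, is $\Sigma^1_3$ (``there exists a witness real $r$ such that every countable transitive model containing $r$ \dots''), and a $\Sigma^1_3\wedge\Pi^1_3$ graph is not $\Pi^1_3$; your claim that the clause is ``$\Delta^1_3$ by the same locality'' does not transfer from the well-order (where totality gives the $\Pi^1_3$ side for free) to a partial function, because for $x$ outside the domain nothing is coded and the negative side fails to be $\Sigma^1_3$. The paper resolves this by coding the \emph{complement}: every pair $(x,z,m)$ with $z\neq f(m,x)$ is eventually written into the apparatus, and the graph is defined as $(x,y)\in A_m$ \emph{and} $(x,y,m)$ is \emph{not} coded --- a conjunction of $\Pi^1_3$ with $\lnot\Sigma^1_3$, hence $\Pi^1_3$. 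Your scheme can be repaired along these lines (``$(x,y)\in A_m$ and no $y'\neq y$ is coded for $x$''), but as written the computation is wrong. Finally, you would still need an analogue of the paper's ``no unwanted codes'' lemma, guaranteeing that the $\Sigma^1_3$ coding formula holds \emph{only} for the reals deliberately coded; the paper proves this by pulling individual Suslin-tree branches out of the product and using preservation of Suslin trees, and nothing in your proposal plays that role.
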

The proof can be adapted to apply to canonical inner models with Woodin cardinals, 

\begin{theorem*}
Let \( M_n \) be the canonical inner model with \( n \) Woodin cardinals. Then there exists a generic extension of \( M_n \) in which there is a \( \Delta^1_{n+3} \)-definable well-order of the reals, \( \mathsf{CH} \) holds, and the \( \Pi^1_{n+3} \)-uniformization property holds.
\end{theorem*}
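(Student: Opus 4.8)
The plan is to transport the proof of the first theorem essentially verbatim from $L$ to $M_n$, using that, from the point of view of fine structure and of an $n$-fold shift of the bottom of the projective hierarchy, $M_n$ behaves exactly as $L$ does at the base. First I would isolate the two components out of which the $L$-construction is built: a ``well-order coding'' iteration --- a reverse-Easton iteration of length $\omega_2$ assembled from Jensen-style coding and David's trick --- which writes into the reals a well-order that is $\Delta^1_3$ but deliberately \emph{not good} in the sense of Addison, so that (reading Addison's observation backwards) it propagates no $\Sigma^1_3$-scale and hence, by Novikov's theorem, does not obstruct $\Pi^1_3$-uniformization; and a ``uniformization module'', the iteration that forces $\Pi^1_3$ to have the scale and hence the uniformization property while preserving $\CH$. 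The whole content of the first theorem is that these two can be interleaved compatibly, and the lifting must preserve that compatibility; call the resulting iteration $\forceP$.

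Next I would collect the structural facts about $M_n$ that play, with every complexity bound raised by $n$, the roles that Shoenfield $\Sigma^1_2$-absoluteness, the $\Delta^1_2$-good well-order of $L$, and the fine structure of $L$ play in the base case. By the work of Steel and Schindler, $M_n$ is an iterable, acceptable, fine-structural premouse carrying Woodin cardinals $\delta_0 < \dots < \delta_{n-1}$; in particular the combinatorial and condensation principles supplied by the fine structure, together with the David-trick machinery, are available over $M_n$ exactly as over $L$. Moreover $\mathcal P(\omega)^{M_n}$ is a thin set, $M_n$ carries a $\Sigma^1_{n+2}$-good well-order of its reals, and $M_n$ is correct for the $\Sigma^1_{n+2}$ statements relevant to the construction; since every forcing occurring in $\forceP$ has size $\omega_2^{M_n} < \delta_0$, the Woodins of $M_n$ are preserved, so $\Sigma^1_{n+2}$-generic absoluteness holds in $M_n$ and in the extensions arising along $\forceP$. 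These are precisely the shifted analogues of the inputs the $L$-proof consumes.

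I would then run $\forceP$ over $M_n$ and re-trace the definability computations. Every place where the $L$-argument reflects a $\Sigma_1$-fact into an initial segment $L_\alpha$ and reads it off as a $\Sigma^1_2$ (or $\Pi^1_2$) statement is replaced by reflection into an initial segment of $M_n$ and a $\Sigma^1_{n+2}$ (or $\Pi^1_{n+2}$) read-off; the single extra existential real quantifier over a code for the iteration then raises the complexity by exactly one. Thus the resulting well-order is $\Delta^1_{n+3}$ --- with its ``good companion'' relation landing only at $\Delta^1_{n+4}$, which is exactly why goodness, and with it the Novikov obstruction, is avoided --- while the uniformization module now yields $\Pi^1_{n+3}$-uniformization; $\CH$ is preserved because $\forceP$ factors as before.

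I expect the main obstacle to be the quantifier bookkeeping. Uniformization is tight in the level --- a \emph{good} $\Delta^1_{n+3}$-well-order would, by Novikov, destroy $\Pi^1_{n+3}$-uniformization --- so one must check, inequality by inequality, that the shift is by exactly $n$ at every step and that no reflection or coding silently costs an additional quantifier, and one must re-verify that the fine-structural lemmas used in the $L$-proof (acceptability, local condensation, the behaviour of the $\Sigma^1_{n+2}$-good well-order under the coding) hold in $M_n$ in the exact forms required, citing Steel's and Schindler's fine-structural analyses of $M_n$. Once this is established, verifying that the small forcing $\forceP$ preserves the Woodins --- and hence the generic absoluteness actually invoked --- is routine.
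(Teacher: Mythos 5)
The paper itself gives no self-contained proof of this theorem: Section 7 consists of the remark that the length-$\omega_1$ countable-support construction of Sections 2--6 ``can be lifted to $M_n$ along the lines of \cite{Ho2}.'' Your high-level strategy --- rerun the same iteration over $M_n$, use the fine structure and $\Sigma^1_{n+2}$-correctness of $M_n$ in place of the corresponding facts about $L$, and observe that the forcing is small relative to the least Woodin so that the Woodins and the attendant generic absoluteness survive --- is indeed the intended route, and your quantifier count (a local $\Pi^1_{n+2}$ statement plus one existential real quantifier giving $\Sigma^1_{n+3}$, hence a $\Delta^1_{n+3}$ well-order and $\Pi^1_{n+3}$-uniformization via a $\Pi^1_{n+3}\wedge\neg\Sigma^1_{n+3}$ definition) comes out right.

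Two things need correcting, though. First, you misdescribe the object being lifted: the construction is not a reverse-Easton iteration of length $\omega_2$, and the uniformization is emphatically not obtained by forcing a $\Pi^1_3$-scale. It is a countable-support, $\omega_1$-length iteration over a prepared ground model $W$, built from Suslin-tree coding, club-shooting through stationary sets, and almost disjoint coding; the uniformizing function is defined combinatorially as ``the unique $y$ with $(x,y)\in A_m$ such that $(x,y,m)$ is \emph{not} coded into $\vec{S}$,'' with the $\infty$-allowability derivative guaranteeing uniqueness. A scale would give a good well-order-like structure at that level, which is exactly what the construction must avoid. Second, the genuine content of the lift --- the part your ``reflection into an initial segment of $M_n$'' gloss hides --- is the replacement, in the local statement $(\ast\ast\ast)$, of ``every countable transitive $\ZFP$-model $M$ with $\omega_1^M=(\omega_1^L)^M$'' by ``every countable iterable $M_n$-like premouse containing $r$.'' Such models are needed because only they compute $\vec{S}$ (and the almost disjoint family $D$, now taken in $M_n$) correctly via comparison/condensation, and it is the complexity of asserting iterability for premice with $n$ Woodins that is the precise source of the shift from $\Pi^1_2$ to $\Pi^1_{n+2}$. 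Without naming that mechanism the ``shift everything by $n$'' step is an assertion rather than an argument; with it, your sketch matches the cited construction.
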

The techniques used to obtain such a universe are based on the work of \cite{Ho1}, \cite{Ho2}, and \cite{Ho3}, to which this paper owes much. However, we employ a different coding machine, resulting in a countable support iteration rather than a finite (or mixed support iteration, depending on the ground model). We believe that this new machine is of independent interest.

Put in a bigger context, this article contributes to the study of the behaviour of separation, reduction and uniformization on the projective hierarchy in the absence of projective determinacy (see \cite{Ho5} for a recent and very different result).

\section{Preliminaries}

The forcings used in this construction are well-known, but we will briefly introduce them and highlight their key properties.

\begin{definition}(see \cite{BHK})
For a stationary set \( R \subset \omega_1 \), the club-shooting forcing for \( R \), denoted \( \forceP_R \), consists of conditions \( p \) which are countable functions from \( \alpha+1 < \omega_1 \) to \( R \) whose image is a closed set. The partial order \( \forceP_R \) is ordered by end-extension.
\end{definition}

The club-shooting forcing \( \forceP_R \) serves as a classic example of an \( R \)-\emph{proper forcing}. A forcing \( \forceP \) is \( R \)-proper if for every condition \( p \in \forceP \), every \( \theta > 2^{|\forceP|} \), and every countable \( M \prec H(\theta) \) such that \( M \cap \omega_1 \in R \) and \( p, \forceP \in M \), there exists a condition \( q < p \) that is \( (M, \forceP) \)-generic. A condition \( q \in \forceP \) is said to be \( (M, \forceP) \)-generic if \( q \Vdash ``\dot{G} \cap M \) is an \( M \)-generic filter," where \( \dot{G} \) is the canonical name for the generic filter. See also \cite{Goldstern}.

\begin{lemma}
Let \( R \subset \omega_1 \) be stationary and co-stationary. Then the club-shooting forcing \( \forceP_R \) generically adds a club through \( R \). Moreover, \( \forceP_R \) is \( R \)-proper, \( \omega \)-distributive, and thus preserves \( \omega_1 \). Additionally, \( R \) and all its stationary subsets remain stationary in the generic extension.
\end{lemma}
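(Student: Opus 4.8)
The plan is to derive $R$-properness and $\omega$-distributivity simultaneously from one construction of a master condition, and then to read off the remaining assertions from it.

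\emph{The generic club.} First I would check that $C:=\bigcup\{\operatorname{ran}(p)\mid p\in G\}$ is a club subset of $R$. Unboundedness follows from density of $D_\beta:=\{p\mid \operatorname{ran}(p)\not\subseteq\beta\}$: given $p$, since $R$ is stationary it is unbounded, so one may append a single value of $R$ above $\beta$ and above $\max\operatorname{ran}(p)$, and the image stays closed because the new point is isolated from above. For closure, if $\delta<\omega_1$ is a limit point of $C$, pick $p\in G$ with $\max\operatorname{ran}(p)>\delta$ (density); using that $G$ is directed and that extensions are end-extensions one gets $C\cap\delta\subseteq\operatorname{ran}(p)$, so $\delta$ is a limit point of the closed set $\operatorname{ran}(p)$, whence $\delta\in\operatorname{ran}(p)\subseteq C$.

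\emph{The master condition.} Fix $p\in\forceP_R$, a large $\theta$, and a countable $M\prec H(\theta)$ with $p,\forceP_R\in M$ and, crucially, $\delta:=M\cap\omega_1\in R$ --- such $M$ exist because $R$ is stationary. Enumerate the dense subsets of $\forceP_R$ that lie in $M$ as $\langle D_n\mid n\in\omega\rangle$, and fix an increasing sequence $\delta_n\to\delta$ (each $\delta_n\in M$). I would build a descending chain $p=p_0\geq p_1\geq\cdots$ with all $p_n\in M$, $p_{n+1}\in D_n$, $\operatorname{dom}(p_{n+1})>\operatorname{dom}(p_n)$, and $\max\operatorname{ran}(p_{n+1})\geq\delta_n$: at stage $n$ one first extends $p_n$ into $D_n$ and then pumps the maximum of the image up past $\delta_n$ with a value of $R$, so the set of admissible candidates is nonempty and definable from parameters in $M$, hence has a witness $p_{n+1}\in M$. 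Then $q_0:=\bigcup_n p_n$ has $\operatorname{dom}(q_0)=:\gamma$ a limit ordinal, $\operatorname{ran}(q_0)\subseteq\delta$ with $\sup\operatorname{ran}(q_0)=\delta$, and I claim $q:=q_0\cup\{(\gamma,\delta)\}$ is a condition: its image is $\operatorname{ran}(q_0)\cup\{\delta\}$, which is closed because every limit point below $\delta$ already lies in some closed $\operatorname{ran}(p_n)$ and $\delta$ itself has just been adjoined, and $\delta\in R$. Since $q\leq p_{n+1}\in D_n\cap M$ for every $n$, the condition $q\leq p$ is $(M,\forceP_R)$-generic.

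\emph{Consequences.} $R$-properness is now immediate, as a countable $M$ with $M\cap\omega_1\in R$ is precisely the hypothesis of the previous paragraph. For $\omega$-distributivity, given $p\in\forceP_R$ and a name $\dot f$ for an $\omega$-sequence of ordinals, choose $M$ as above with $\dot f\in M$; the dense sets $E_n:=\{r\mid r$ decides the value $\dot f(\check n)\}$ lie in $M$, so the master condition $q$ decides $\dot f(n)$ for all $n$, forcing $\dot f=\check g$ for the ground-model sequence $g$ of decided values; thus the conditions forcing $\dot f$ into $V$ are dense below $p$, and $\forceP_R$ adds no new $\omega$-sequence of ordinals, hence no new reals, and $\omega_1$ is preserved. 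For preservation of stationarity, let $S\subseteq R$ be stationary and $\dot C$ a name for a club; choose $M$ as above but additionally with $\dot C,S\in M$ and $\delta:=M\cap\omega_1\in S$ (possible since $S$ is stationary, and then $\delta\in S\subseteq R$ so the construction applies), and let $q$ be the master condition. A standard density argument inside $M$ shows $q$ forces $\dot C\cap\delta$ to be cofinal in $\delta$, hence by closure of $\dot C$ that $q\Vdash\delta\in\dot C$; as $\delta\in S$ we get $q\Vdash\dot C\cap S\neq\emptyset$, and since $p,\dot C$ were arbitrary, $S$ stays stationary.

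\emph{Main obstacle.} The one delicate point is the limit stage of the master-condition construction: $\bigcup_n p_n$ is not itself a condition, since its image is cofinal in $\delta$ but does not contain $\delta$, and it can be completed to a condition only because we arranged $\sup\operatorname{ran}(q_0)=\delta\in R$. Securing a countable $M$ with $M\cap\omega_1\in R$ --- which is exactly where the stationarity of $R$ is consumed --- is what makes this possible; for $M$ with $M\cap\omega_1\notin R$ there is in general no $(M,\forceP_R)$-generic condition, which is the reason $\forceP_R$ is only $R$-proper, and not proper, once $R$ is co-stationary.
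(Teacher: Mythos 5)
Your proof is correct and is the standard master-condition argument for club shooting; the paper itself omits the proof, deferring to the cited literature (Baumgartner--Harrington--Kleinberg), and your construction --- building a descending chain in $M$ whose ranges converge to $\delta=M\cap\omega_1\in R$ and capping it with $(\gamma,\delta)$ --- is exactly the intended one, correctly yielding $R$-properness, $\omega$-distributivity, and preservation of stationary subsets of $R$ in one stroke. Your closing remark about why the argument needs $M\cap\omega_1\in R$ (and hence only gives $R$-properness) identifies the right delicate point.
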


We will select a family of sets \( R_{\beta} \) such that we can shoot an arbitrary pattern of clubs through the elements of \( R_{\beta} \), with this pattern being readable from the stationarity of the \( R_{\beta} \)'s in the generic extension. It is crucial to recall that for a stationary, co-stationary set \( R \subset \omega_1 \), \( R \)-proper posets can be iterated with countable support, and the result will again be an \( R \)-proper forcing. This follows from the well-known results for plain proper forcings (see \cite{Goldstern}, Theorem 3.9, and the subsequent discussion).

\begin{fact}
Let \( R \subset \omega_1 \) be stationary and co-stationary. Suppose \( (\forceP_{\alpha} : \alpha < \gamma) \) is a countable support iteration, and let \( \forceP_{\gamma} \) denote the resulting partial order obtained through the countable support limit. If at each stage \( \alpha \), \( \forceP_{\alpha} \Vdash \dot{\forceP}(\alpha) \) is \( R \)-proper, then \( \forceP_{\gamma} \) is \( R \)-proper.
\end{fact}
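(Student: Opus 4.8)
The plan is to revisit the proof that a countable support iteration of proper forcings is proper (\cite{Goldstern}, Theorem 3.9) and localize every step to the stationary set $R$. As there, I would prove by induction on $\gamma$ the following strengthened statement, which is what actually closes the induction: for every sufficiently large regular $\theta$, every countable $M \prec H(\theta)$ with $R$ and $(\forceP_\alpha : \alpha \le \gamma)$ in $M$ and with $M \cap \omega_1 \in R$, every $\alpha \le \gamma$ with $\alpha \in M \cup \{\gamma\}$, every $(M, \forceP_\alpha)$-generic $q_\alpha \in \forceP_\alpha$, and every $\forceP_\alpha$-name $\dot p$ with $q_\alpha \Vdash$ ``$\dot p \in \forceP_\gamma \cap M[\dot G_\alpha]$ and $\dot p \restriction \alpha \in \dot G_\alpha$'', there is an $(M, \forceP_\gamma)$-generic $q \in \forceP_\gamma$ with $q \restriction \alpha = q_\alpha$, with $\operatorname{supp}(q) \setminus \alpha \subseteq M$, and with $q \Vdash \dot p \in \dot G_\gamma$. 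Taking $\alpha = 0$, $q_\alpha$ the trivial condition and $\dot p = \check p$ yields exactly the conclusion of the Fact.

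Two preliminary observations make the localization legitimate. First, any $R$-proper poset $\forceP$ preserves $\omega_1$: given $p \in \forceP$ and a name $\dot f$ for a function from $\omega$ into $\omega_1$, pick $\theta$ large and $M \prec H(\theta)$ countable with $p, \forceP, \dot f \in M$ and $M \cap \omega_1 \in R$ (possible since $R$ is stationary and the fitting $M$ form a club in $[H(\theta)]^{\omega}$); then an $(M, \forceP)$-generic $q \le p$ forces $\operatorname{ran}(\dot f) \subseteq M \cap \omega_1$. By the induction hypothesis, each $\forceP_\alpha$ with $\alpha \le \gamma$ therefore preserves $\omega_1$. Second, if $q$ is $(M, \forceP_\alpha)$-generic and $q \in G_\alpha$ is generic, then $M[G_\alpha] \prec H(\theta)^{V[G_\alpha]}$ and $M[G_\alpha] \cap \mathrm{Ord} = M \cap \mathrm{Ord}$; indeed any ordinal $\xi \in M[G_\alpha]$ equals $\dot\xi^{G_\alpha}$ for some name $\dot\xi \in M$, the antichain of conditions deciding $\dot\xi$ lies in $M$ and is met by $G_\alpha$ inside $M$, so $\xi \in M$. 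In particular $M[G_\alpha] \cap \omega_1 = M \cap \omega_1 \in R$, so in $V[G_\alpha]$ the model $M[G_\alpha]$ is of exactly the kind required to witness the forced $R$-properness of the next iterand.

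With this in hand, the induction runs just as in the classical theorem. At a successor $\gamma = \beta + 1$, factor $\forceP_{\beta+1} \simeq \forceP_\beta * \dot\forceP(\beta)$, apply the induction hypothesis at $\beta$ to get $q_\beta$, pass to $V[G_\beta]$ where $M[G_\beta]$ is suitable and $\dot\forceP(\beta)^{G_\beta}$ is $R$-proper, extend $\dot p(\beta)^{G_\beta}$ to an $(M[G_\beta], \dot\forceP(\beta)^{G_\beta})$-generic condition there, and verify that $q := q_\beta {}^\frown \dot r$ is $(M, \forceP_{\beta+1})$-generic; this is the only point where properness of an iterand is invoked, and it is precisely where $M[G_\beta] \cap \omega_1 \in R$ is needed. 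At a limit $\gamma$, put $\delta := \sup(M \cap \gamma)$ (so $\operatorname{cf}(\delta) = \omega$ since $M$ is countable, and $\alpha < \delta$), fix an increasing sequence $\gamma_n \to \delta$ with $\gamma_0 = \alpha$ and $\gamma_n \in M$, enumerate the dense subsets of $\forceP_\gamma$ lying in $M$ as $(D_n)_{n \in \omega}$, and recursively build $(M, \forceP_{\gamma_n})$-generic $q_n$ with $\operatorname{supp}(q_n) \setminus \alpha \subseteq M$ together with $\forceP_{\gamma_n}$-names $\dot p_n$ such that $q_n$ forces $\dot p_n \in \forceP_\gamma \cap M[\dot G_{\gamma_n}]$ with support below $\delta$ (using $M[\dot G_{\gamma_n}] \cap \gamma = M \cap \gamma$), $\dot p_n \restriction \gamma_n \in \dot G_{\gamma_n}$, $\dot p_{n+1} \le \dot p_n$, and $\dot p_{n+1} \in D_n$, each step being an instance of the induction hypothesis at the strictly shorter stage $\gamma_{n+1}$ applied to the name $\dot p_{n+1} \restriction \gamma_{n+1}$. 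Then $q := \bigcup_n q_n$ has countable support contained in $\operatorname{supp}(q_\alpha) \cup (M \cap \gamma)$, hence lies in $\forceP_\gamma$ (using that $\forceP_\gamma$ is the direct limit when $\delta < \gamma$), satisfies $q \restriction \gamma_n = q_n$ and $q \Vdash \dot p \in \dot G_\gamma$, and the bookkeeping through the $D_n$ shows $q$ is $(M, \forceP_\gamma)$-generic. I do not foresee a genuine obstacle: the entire new content is the observation isolated above --- that an $M$-generic extension moves neither $M \cap \mathrm{Ord}$ nor, hence, the value $M \cap \omega_1 \in R$ --- so that $R$-properness of the iterands can be invoked at each successor step exactly where plain properness would be; the rest is the standard fusion argument verbatim, the only real care being to state the induction hypothesis strongly enough (the ``two-step, with a name for the condition'' form above) for the limit step.
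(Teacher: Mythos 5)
Your proposal is correct and is exactly the argument the paper has in mind: the paper states this as a Fact with a pointer to the standard countable-support iteration theorem for proper forcing (Goldstern, Theorem 3.9), the point being that the classical fusion proof localizes to $R$ once one notes that an $(M,\forceP_\alpha)$-generic condition forces $M[\dot G_\alpha]\cap\omega_1=M\cap\omega_1\in R$, so the $R$-properness of each iterand can be invoked at the successor steps. Your strengthened two-step induction hypothesis with a name for the condition is the standard formulation needed to close the limit case, and the rest of your sketch matches the intended argument.
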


Once we decide to shoot a club through a stationary, co-stationary subset of \( \omega_1 \), this club will be contained in all \( \omega_1 \)-preserving outer models. This gives us a robust method of encoding arbitrary information into a suitably chosen sequence of sets, which has been used multiple times in previous work.

\begin{lemma}\label{coding with stationary sets}
Let \( (R_{\alpha} : \alpha < \omega_1) \) be a partition of \( \omega_1 \) into \( \aleph_1 \)-many stationary sets, let \( r \in 2^{\omega_1} \) be arbitrary, and let \( \forceP \) be a countable support iteration \( (\forceP_{\alpha} : \alpha < \omega_1) \), defined inductively by:
\[
\forceP(\alpha) := \dot{\forceP}_{\omega_1 \setminus R_{2 \cdot \alpha}} \text{ if } r(\alpha) = 1,
\]
and
\[
\forceP(\alpha) := \dot{\forceP}_{\omega_1 \setminus R_{(2 \cdot \alpha) + 1}} \text{ if } r(\alpha) = 0.
\]
Then in the resulting generic extension \( V[\forceP] \), we have for all \( \alpha < \omega_1 \):
\[
r(\alpha) = 1 \text{ if and only if } R_{2 \cdot \alpha} \text{ is nonstationary,}
\]
and
\[
r(\alpha) = 0 \text{ if and only if } R_{(2 \cdot \alpha) + 1} \text{ is nonstationary.}
\]
\end{lemma}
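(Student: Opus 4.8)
The plan is to peel off the ``positive'' implications, which fall out of the construction, and reduce the ``negative'' ones to a stationarity-preservation statement established by induction along the iteration. Write $\forceP=\forceP_{\omega_1}$ for the countable support limit and $\forceP_{\alpha'}$ for its initial segments. At each stage $\alpha$ there is a single \emph{target} $\gamma_\alpha\in\{2\alpha,2\alpha+1\}$ with $\forceP(\alpha)=\dot{\forceP}_{\omega_1\setminus R_{\gamma_\alpha}}$: namely $\gamma_\alpha=2\alpha$ when $r(\alpha)=1$ and $\gamma_\alpha=2\alpha+1$ when $r(\alpha)=0$. Call $\beta$ \emph{targeted} if $\beta=\gamma_\alpha$ for some $\alpha$, and \emph{untargeted} otherwise; thus $2\alpha$ is targeted iff $r(\alpha)=1$, $2\alpha+1$ is targeted iff $r(\alpha)=0$, and there are $\aleph_1$ untargeted indices. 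Granting for the moment that $\forceP$ preserves $\omega_1$: if $R_\beta$ is targeted, then the generic club through $\omega_1\setminus R_\beta$ added by $\forceP(\alpha)$ is disjoint from $R_\beta$ and remains a club in $V[\forceP]$, so $R_\beta$ is nonstationary there. This already delivers ``$r(\alpha)=1\Rightarrow R_{2\alpha}$ is nonstationary'' and ``$r(\alpha)=0\Rightarrow R_{2\alpha+1}$ is nonstationary''.

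For the converses it suffices to prove that every \emph{untargeted} $R_\beta$ stays stationary in $V[\forceP]$: if $R_{2\alpha}$ is nonstationary then $r(\alpha)\neq 0$ (otherwise $2\alpha$ would be untargeted, hence $R_{2\alpha}$ stationary), so $r(\alpha)=1$; symmetrically $R_{2\alpha+1}$ nonstationary forces $r(\alpha)=0$. This rests on the claim
\[
(\star_{\alpha'}):\qquad \forceP_{\alpha'}\text{ is }R_\delta\text{-proper for every }\delta\text{ not targeted at any stage }<\alpha',
\]
proved by induction on $\alpha'\leq\omega_1$. Because an $R_\delta$-proper forcing preserves the stationarity of $R_\delta$ (catch a name for a club inside a countable $M\prec H(\theta)$ with $M\cap\omega_1\in R_\delta$ and apply an $(M,\forceP_{\alpha'})$-generic condition), $(\star_{\alpha'})$ gives that $R_\delta$ is stationary in $V[\forceP_{\alpha'}]$ for each such $\delta$; in particular $(\star_{\omega_1})$ is exactly the needed statement and also shows $\forceP$ is $R_\delta$-proper for an untargeted $\delta$, hence $\omega_1$-preserving, justifying the assumption made above.

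For the inductive step, assume $(\star_\zeta)$ for all $\zeta<\alpha'$, fix $\delta$ not targeted before $\alpha'$, and fix $\eta<\alpha'$. Since $\gamma_\eta$ is targeted only at stage $\eta$, it is not targeted before $\eta$, so by $(\star_\eta)$ the set $R_{\gamma_\eta}$ is stationary in $V[\forceP_\eta]$; picking an untargeted $\delta_0\neq\gamma_\eta$ (there are $\aleph_1$ of them), $(\star_\eta)$ likewise makes $R_{\delta_0}\subseteq\omega_1\setminus R_{\gamma_\eta}$ stationary there, so $\omega_1\setminus R_{\gamma_\eta}$ is stationary and co-stationary in $V[\forceP_\eta]$. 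Hence there $\forceP(\eta)=\dot{\forceP}_{\omega_1\setminus R_{\gamma_\eta}}$ is $(\omega_1\setminus R_{\gamma_\eta})$-proper, and since $\delta\neq\gamma_\eta$ we get $R_\delta\subseteq\omega_1\setminus R_{\gamma_\eta}$, so $\forceP_\eta\Vdash$ ``$\dot{\forceP}(\eta)$ is $R_\delta$-proper''. As $\eta<\alpha'$ was arbitrary, the Fact on countable support limits of $R_\delta$-proper iterations yields $(\star_{\alpha'})$. Assembling $(\star_{\omega_1})$ with the positive implications then gives both biconditionals.

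The step I expect to be the genuine obstacle is precisely this interlocking of $(\star)$: to even assert that $\forceP(\eta)$ is club-shooting through a \emph{stationary co-stationary} set, and so to invoke $R$-properness and the Fact for it, one must already know that enough of the $R_\gamma$'s survived up to stage $\eta$ — so the $R_\delta$-properness of all initial segments has to be threaded through the induction rather than read off at the end. The rest (absoluteness of clubness under $\omega_1$-preserving extensions, $R_\delta$-properness passing to stationary subsets of $R_\delta$, the parity bookkeeping of targeted versus untargeted indices) is routine.
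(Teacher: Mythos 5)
Your proof is correct and follows essentially the same route as the paper's: the forward implications come from the shot clubs surviving in the $\omega_1$-preserving extension, and the converses from observing that an untargeted $R_\beta$ makes every factor, hence the whole countable support iteration, $R_\beta$-proper. The only difference is that you make explicit, via the induction $(\star_{\alpha'})$, the point that each factor is club-shooting through a set that is still stationary and co-stationary at that stage — a step the paper's proof leaves implicit.
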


\begin{proof}
Assume without loss of generality that \( r(0) = 1 \). Then the iteration will be \( R_1 \)-proper, hence \( \omega_1 \)-preserving. Now let \( \alpha < \omega_1 \) be arbitrary, and assume that \( r(\alpha) = 1 \) in \( V[\forceP] \). By the definition of the iteration, we must have shot a club through the complement of \( R_{2 \cdot \alpha} \), so \( R_{2 \cdot \alpha} \) is nonstationary in \( V[\forceP] \).

Conversely, if \( R_{2 \cdot \alpha} \) is nonstationary in \( V[\forceP] \), assume for contradiction that we did not use \( \forceP_{\omega_1 \setminus R_{2 \cdot \alpha}} \) in the iteration \( \forceP \). Note that for \( \beta \neq 2 \cdot \alpha \), every forcing of the form \( \forceP_{\omega_1 \setminus R_{\beta}} \) is \( R_{2 \cdot \alpha} \)-proper, as \( \forceP_{\omega_1 \setminus R_{\beta}} \) is \( \omega_1 \setminus R_{\beta} \)-proper, and \( R_{2 \cdot \alpha} \subset \omega_1 \setminus R_{\beta} \). Hence, the iteration \( \forceP \) would be \( R_{2 \cdot \alpha} \)-proper, and the stationarity of \( R_{2 \cdot \alpha} \) would be preserved. But this leads to a contradiction.
\end{proof}

The second forcing technique we employ is the \emph{almost disjoint coding forcing}, introduced by R. Jensen and R. Solovay (see \cite{JensenSolovay}). In this context, we identify subsets of \( \omega \) with their characteristic functions, and we use the term \emph{reals} both for elements of \( 2^\omega \) and for subsets of \( \omega \), respectively.

Let \( D = \{ d_{\alpha} \mid \alpha < \aleph_1 \} \) be a family of almost disjoint subsets of \( \omega \), meaning that for any two sets \( r, s \in D \), the intersection \( r \cap s \) is finite. Let \( X \subset \kappa \) be a set of ordinals, where \( \kappa \leq 2^{\aleph_0} \). There exists a ccc forcing, denoted \( \mathbb{A}_D(X) \), which adds a new real \( x \) that encodes \( X \) relative to the family \( D \). The encoding satisfies the following condition:

\[
\alpha \in X \text{ if and only if } x \cap d_{\alpha} \text{ is finite.}
\]

\begin{definition}
The almost disjoint coding forcing \( \mathbb{A}_D(X) \), relative to an almost disjoint family \( D \), consists of conditions \( (r, R) \in \omega^{<\omega} \times D^{<\omega} \). The partial order \( (s, S) \le (r, R) \) holds if and only if:
\begin{enumerate}
  \item \( r \subseteq s \) and \( R \subseteq S \),
  \item If \( \alpha \in X \) and \( d_{\alpha} \in R \), then \( r \cap d_{\alpha} = s \cap d_{\alpha} \).
\end{enumerate}
\end{definition}

For the remainder of this paper, we let \( D \in L \) be the definable almost disjoint family of reals obtained by recursively adding the \( <_L \)-least real to the family, ensuring that each new real is almost disjoint from all previously chosen reals. Whenever we use almost disjoint coding forcing, we assume that we code relative to this fixed almost disjoint family \( D \).

The next two forcings we briefly discuss are Jech's forcing for adding a Suslin tree with countable conditions, and the associated forcing that adds a cofinal branch through a Suslin tree \( S \). Recall that a set-theoretic tree \( (S, <) \) is a \emph{Suslin tree} if it is a normal tree of height \( \omega_1 \) and has no uncountable antichain. Forcing with a Suslin tree \( S \), where conditions are just nodes in \( S \), is a ccc forcing of size \( \aleph_1 \). Jech’s forcing to generically add a Suslin tree is defined as follows:

\begin{definition}
Let \( \forceP_J \) be the forcing whose conditions are countable, normal trees ordered by end-extension, i.e. \( T_1 \le T_2 \) if and only if there exists \( \alpha \leq \operatorname{height}(T_1) \) such that \( T_2 = \{ t \upharpoonright \alpha \mid t \in T_1 \} \).
\end{definition}

It is well-known that \( \forceP_J \) is \( \sigma \)-closed and adds a Suslin tree. In fact, \( \forceP_J \) is forcing equivalent to \( \mathbb{C}(\omega_1) \), the forcing for adding a Cohen subset to \( \omega_1 \) with countable conditions. The tree \( T \) generically added by \( \forceP_J \) has the additional property that for any Suslin tree \( S \) in the ground model, \( S \times T \) will also be a Suslin tree in \( V[G] \). This property can be used to develop a robust coding method (see also \cite{Ho} for further applications).

\begin{lemma}\label{1 preservation of Suslin trees}
Let \( V \) be a universe, and let \( S \in V \) be a Suslin tree. Suppose \( \forceP_J \) is Jech’s forcing for adding a Suslin tree. Let \( g \subset \forceP_J \) be a generic filter, and let \( T = \bigcup g \) denote the generic tree. If we let \( T \in V[g] \) be the forcing that adds an \( \omega_1 \)-branch \( b \) through \( T \), then:

\[
V[g][b] \models S \text{ is Suslin.}
\]
\end{lemma}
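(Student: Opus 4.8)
The plan is to reduce the statement to the product property of Jech's forcing recalled just before it, namely that $S \times T$ is a Suslin tree in $V[g]$. I would first note that $\forceP_J$ preserves $\omega_1$ (being $\sigma$-closed) and that the forcing $T$ over $V[g]$ is ccc (being a Suslin tree), so that $\omega_1^{V[g][b]} = \omega_1^{V}$; moreover $S \in V$ gains no new nodes in $V[g][b]$, so there it is still a normal tree of height $\omega_1$. Hence it suffices to show that $S$ has no uncountable antichain in $V[g][b]$ (and, $S$ being normal, this automatically also excludes an uncountable branch, so $S$ is Suslin in the strongest sense).

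Next I would argue by contradiction: assume $A \in V[g][b]$ is an uncountable antichain of $S$ and pass to $V[g]$. Fix a $T$-name $\dot A$ and a condition $t^\ast \in T$ with $\mathbb{1}_T \Vdash$ ``$\dot A$ is an antichain of $\check S$'' and $t^\ast \Vdash$ ``$\dot A$ is uncountable''. Put $B = \{ a \in S : \exists t \in T \ (t \Vdash \check a \in \dot A) \}$, an element of $V[g]$. Since $t^\ast \Vdash \dot A \subseteq \check B$, the set $B$ is uncountable, and as $|S| = \aleph_1$ I can fix an injective sequence $\langle a_\alpha : \alpha < \omega_1\rangle$ enumerating $\aleph_1$-many elements of $B$, together with conditions $t_\alpha \in T$ satisfying $t_\alpha \Vdash \check a_\alpha \in \dot A$.

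The key step is then to check that $\{(a_\alpha, t_\alpha) : \alpha < \omega_1\}$ is an uncountable antichain of $S \times T$ in $V[g]$, contradicting the product property. Suppose $(a,t)$ extends both $(a_\alpha, t_\alpha)$ and $(a_\beta, t_\beta)$ with $\alpha \neq \beta$. From $a \le_S a_\alpha$ and $a \le_S a_\beta$ the nodes $a_\alpha, a_\beta$ are $\le_S$-comparable. From $t \le_T t_\alpha, t_\beta$ and $t \le \mathbb{1}_T$ we get $t \Vdash$ ``$\check a_\alpha, \check a_\beta \in \dot A$ and $\dot A$ is an antichain'', hence $t \Vdash$ ``$\check a_\alpha, \check a_\beta$ are incomparable or equal''. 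Since $a_\alpha \ne a_\beta$ and comparability of nodes of $S$ is absolute, this is a contradiction, which completes the argument.

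The only real content here is this reflection step of the last two paragraphs — transferring an uncountable antichain of $S$ from the $T$-extension to an uncountable antichain of $S \times T$ over $V[g]$; the point that needs a little care is the bookkeeping that makes the $a_\alpha$ pairwise distinct and keeps separate the conditions responsible for ``$\dot A$ is an antichain'' and ``$\dot A$ is uncountable''. The rest — preservation of $\omega_1$ and the persistence of normality and of height $\omega_1$ for $S$ — is immediate, and I anticipate no genuine obstacle provided the product property recalled above is applied to precisely this $T = \bigcup g$ and this ground-model Suslin tree $S$.
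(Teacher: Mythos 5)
Your proof is correct, but it takes a genuinely different route from the paper's. The paper does not pass through the product property at all: it observes that the two-step iteration \( \forceP_J \ast \dot{T} \) has a dense \( \sigma \)-closed subset, namely \( \{ (p, \check{q}) : p \in \forceP_J,\ \operatorname{height}(p) = \alpha+1,\ \check{q} \text{ a node of } p \text{ at level } \alpha \} \), and then quotes the standard fact that \( \sigma \)-closed forcings preserve ground-model Suslin trees. You instead take as a black box the assertion (stated, but not proved, in the paragraph preceding the lemma) that \( S \times T \) is Suslin in \( V[g] \), and run the standard reflection argument for the implication ``\( S \times T \) ccc \( \Rightarrow\ \Vdash_T S \) is Suslin'': transferring an uncountable antichain of \( S \) in \( V[g][b] \) to an uncountable antichain of \( S \times T \) in \( V[g] \) via the set \( B \) and the pairs \( (a_\alpha, t_\alpha) \). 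That argument is sound as written (including the bookkeeping with \( \mathbb{1}_T \Vdash \text{``}\dot A \text{ is an antichain''} \) versus \( t^\ast \Vdash \text{``}\dot A \text{ is uncountable''} \), and the preservation of \( \omega_1 \)). Two trade-offs are worth noting. First, your proof shifts the entire burden onto the product property, which the paper only asserts; in some developments that property is itself derived from the dense-\( \sigma \)-closed-subset observation, so one should make sure an independent proof of it (e.g.\ the usual sealing argument over the \( \sigma \)-closed \( \forceP_J \)) is available to avoid circularity. Second, the paper's argument generalizes immediately to the forcing \( \forceP_J \ast \prod_{i<\omega_1} \dot{T} \) adding \( \omega_1 \)-many branches, which is used right after the lemma, whereas your antichain-transfer argument handles only the single-branch case as stated.
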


\begin{proof}
Let \( \dot{T} \) be the \( \forceP_J \)-name for the generic Suslin tree. We claim that \( \forceP_J \ast \dot{T} \) has a dense subset which is \( \sigma \)-closed. Since \( \sigma \)-closed forcings preserve ground model Suslin trees, this will be sufficient. To prove the claim, consider the following set:

\[
\{ (p, \check{q}) \mid p \in \forceP_J \land \operatorname{height}(p) = \alpha+1 \land \check{q} \text{ is a node of } p \text{ at level } \alpha \}.
\]

It is easy to check that this set is dense and \( \sigma \)-closed in \( \forceP_J \ast \dot{T} \).
\end{proof}

The fact that the two-step iteration \( \forceP_J \ast \dot{T} \) contains a dense, \( \sigma \)-closed subset also implies that the following two-step iteration:

\[
\forceP_J \ast \prod_{i < \omega_1} \dot{T}
\]
which first adds a Suslin tree and then adds \( \omega_1 \)-many cofinal branches through the generic tree, using a countably supported product over \( V[\forceP_J] \), has a countably closed, dense subset. In fact, the set:

\begin{align*}
\{ (p, \vec{q}) \mid p \in \forceP_J, &\text{ height}(p) = \alpha + 1, \,  \\& \vec{q} \text{ is a countable sequence of nodes of } p \text{ at level } \alpha \}
\end{align*}
is dense in \( \forceP_J \ast \prod_{i < \omega_1} \dot{T} \) and \( \sigma \)-closed.

The Suslin trees added by Jech’s forcing have an important property that we will exploit. First, we define:

\begin{definition}
Let \( T \) be a Suslin tree. We say that \( T \) is \emph{Suslin off the generic branch} if, after forcing with \( T \) to add a generic branch \( b \), the tree \( T_p \) remains Suslin for every node \( p \in T \) that is not in \( b \).

More generally, let \( T \) be Suslin and let \( n \in \omega \). We say that \( T \) is \emph{n-fold Suslin off the generic branch} if, after forcing with \( T^n := \prod_{i < n} T \) (which adds \( n \)-many generic branches \( b_0, \dots, b_{n-1} \)), the tree \( T_p \) remains Suslin in \( V[T^n] \) for any node \( p \) that is not in any of the \( b_i \)'s.
\end{definition}

The next result, due to G. Fuchs and J. Hamkins, shows that Suslin trees added by Jech’s forcing have this property:

\begin{theorem}
Let \( T \) be a \( \forceP_J \)-generic Suslin tree. Then \( T \) is \emph{n-fold Suslin off the generic branch} for every \( n \in \omega \).
\end{theorem}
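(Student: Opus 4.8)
\emph{Proof proposal.} This is the combinatorial heart of the Fuchs--Hamkins analysis of the Jech tree, and I would organise it as a single induction on $n$ that proves two assertions in an interleaved fashion: $(\ast)_n$, ``$T^n$ is Suslin in $V[\forceP_J]$'', and $(\ast\ast)_n$, ``$T$ is $n$-fold Suslin off the generic branch''. The small cases are immediate ($T^0$ is trivial, $T^1=T$ is the $\forceP_J$-generic Suslin tree, $(\ast\ast)_0$ is vacuous). At a typical stage one first upgrades $(\ast)_n$ to $(\ast)_{n+1}$, and then deduces $(\ast\ast)_n$ from $(\ast)_n$ together with $(\ast)_{n+1}$; so after stage $n+1$ one knows $(\ast)_m$ for $m\le n+1$ and $(\ast\ast)_m$ for $m\le n$.

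The implication $(\ast)_n\ \&\ (\ast)_{n+1}\Rightarrow(\ast\ast)_n$ is the clean part. Work in $V[\forceP_J][\vec b]$ with $\vec b=(b_0,\dots,b_{n-1})$ a $T^n$-generic over $V[\forceP_J]$; this is legitimate by $(\ast)_n$, which in particular gives that $T^n$ is ccc. Fix a node $p$ of $T$ lying on none of the $b_i$. Since $T$ is normal, $T_p$ has height $\omega_1$ with countable levels, so being Suslin amounts to having no uncountable branch and no uncountable antichain; the no-branch clause (that the only cofinal branches of $T$ in $V[\forceP_J][\vec b]$ are $b_0,\dots,b_{n-1}$, none through $p$) is handled by a variant of the extraction argument I now describe. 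Suppose $\dot A$ is a $T^n$-name and $\vec s$ a condition forcing ``$\dot A$ is an uncountable antichain of $\dot T_{\check p}$''. Using ccc of $T^n$, enumerate $\dot A$ by names $\dot a_\xi$ ($\xi<\omega_1$) that $\vec s$ forces pairwise distinct, and pick $\vec u_\xi\le\vec s$ deciding $\dot a_\xi=\check a_\xi$ for a node $a_\xi$ of $T_p$. Then $X:=\{(\vec u_\xi,a_\xi):\xi<\omega_1\}$ is an uncountable antichain of the product forcing $T^n\times T_p$ and lies in $V[\forceP_J]$: distinctness of the $(\vec u_\xi,a_\xi)$ follows from distinctness of the $\dot a_\xi$, and if $(\vec u_\xi,a_\xi)$ were compatible with $(\vec u_\eta,a_\eta)$ then a common extension $\vec w\le\vec u_\xi,\vec u_\eta$ below $\vec s$ would force $a_\xi$ and $a_\eta$ comparable (they would be comparable elements of the antichain $\dot A$), contradicting the absolute incomparability of these two fixed nodes. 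But $T^n\times T_p$ is simply $T^{n+1}$ restricted below the condition whose first $n$ coordinates are the root of $T$ and whose last coordinate is $p$, so $X$ is an uncountable antichain of $T^{n+1}$ in $V[\forceP_J]$, contradicting $(\ast)_{n+1}$.

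The upgrade $(\ast)_n\Rightarrow(\ast)_{n+1}$ is where the special structure of $\forceP_J$ is indispensable. By the proof of the preservation lemma for Suslin trees above and the remarks after it, $\forceP_J\ast\dot T^{n+1}$ has a dense $\sigma$-closed subset $Q_{n+1}$, whose conditions are pairs $(r,\vec t)$ with $r$ a countable normal tree of successor height $\alpha+1$ and $\vec t$ an $(n+1)$-tuple of level-$\alpha$ nodes of $r$. A $Q_{n+1}$-generic yields $T$ together with $n+1$ mutually generic cofinal branches $c_0,\dots,c_n$; choosing a level $\gamma$ above which $c_n$ has split off from $c_0,\dots,c_{n-1}$ and setting $q:=c_n(\gamma)$, one checks that $q$ lies off $c_0,\dots,c_{n-1}$ and that the tail of $c_n$ above $q$ is $T_q$-generic over $V[\forceP_J][c_0,\dots,c_{n-1}]$. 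Hence $V[Q_{n+1}\text{-generic}]$ is simultaneously a $\sigma$-closed forcing extension of $V$ and an extension of the $T^n$-generic model $V[\forceP_J][c_0,\dots,c_{n-1}]$ by the forcing $T_q$ for a node $q$ off the generic branches. I would then argue that $T^{n+1}$ has no uncountable antichain in $V[\forceP_J]$: such an antichain would persist into the $\sigma$-closed extension $V[Q_{n+1}\text{-generic}]$, where analysing it level by level against the branch $c_n$ funnels it into the countably many cones hanging off $c_0,\dots,c_{n-1}$; by the already available instances of $(\ast\ast)$ and the fact (used already in the preservation lemma) that $\sigma$-closed forcing preserves Suslin trees, each of those cones stays Suslin, so the antichain meets each in a countable set and is countable after all.

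The main obstacle is precisely this last step, i.e.\ making the induction close: the Suslinity of the finite powers $T^m$ is not a soft consequence of $\sigma$-closedness but genuinely depends on the homogeneity of $\forceP_J$ encoded in the dense $\sigma$-closed subsets $Q_m$, and it cannot be disentangled from the off-the-generic-branch property — which is why $(\ast)_n$ and $(\ast\ast)_n$ must be run as a single, carefully bookkept induction rather than proved separately. The remaining ingredients (the no-branch clauses, and the identification of $V[Q_{n+1}\text{-generic}]$ with the two-step extension above) are routine once the powers are known to be Suslin.
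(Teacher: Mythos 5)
The paper does not actually prove this theorem --- it is quoted from Fuchs and Hamkins \cite{FH} --- so your argument has to stand on its own, and it has a genuine gap: the inductive hypothesis $(\ast)_n$, that the full power $T^n$ is Suslin in $V[\forceP_J]$, is false for every $n\ge 2$. Indeed $T^2=T\times T$ is not even ccc: forcing with the first factor adds a cofinal branch $b$ through $T$, and in $V[\forceP_J][b]$ the nodes obtained by choosing, at each level $\alpha$, an immediate successor of the node $b(\alpha)$ that does not lie on $b$ form an uncountable antichain of $T$ (normality guarantees enough splitting), so $T$ is no longer ccc after the first factor and hence $T\times T$ is not ccc in $V[\forceP_J]$. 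This is exactly why the paper and the literature always speak of Suslinity of products of \emph{derived} trees $T_{p_0}\times\cdots\times T_{p_k}$ at suitably separated nodes (cf.\ Lemma \ref{ManySuslinTrees} and the notion of an independent sequence), never of the powers $T^n$ themselves.

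As a consequence both halves of your induction break. In the second paragraph you even invoke ccc-ness of $T^n$ to set up the extraction, and the antichain $X$ you extract lives in $T^n\times T_p$, which is not ccc, so no contradiction is obtained. What the extraction genuinely produces, after noting that a condition $\vec s$ forcing $p$ off all the branches must have every coordinate $s_i$ incomparable with $p$, is an uncountable antichain in $T_{s_0}\times\cdots\times T_{s_{n-1}}\times T_p$; so the correct inductive target is the \emph{freeness} of $T$ --- Suslinity of finite products of cones over appropriately chosen (pairwise splitting) nodes --- not Suslinity of $T^{n+1}$. And that statement is not obtained by a ccc-extraction over $V[\forceP_J]$ at all, but by an antichain-sealing density argument carried out in the dense $\sigma$-closed subset of $\forceP_J\ast\dot T^{n+1}$ over the ground model of $\forceP_J$, exploiting that the tree itself is generic: one extends a condition $(p,\vec t\,)$ countably often so that every node at the top level of the extended tree lies above some already-decided member of the named antichain, which seals it. Your third paragraph gestures at this but leaves precisely that step (``funnels it into the countably many cones'') unproved, and as written it is aimed at establishing the false statement $(\ast)_{n+1}$. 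The fix is to replace $(\ast)_n$ throughout by the freeness statement and to run the sealing argument over $V$ rather than over $V[\forceP_J]$.
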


Note that if \( T \) is n-fold Suslin off the generic branch, and \( \{ t_i \mid i < \omega \} \) is an antichain in \( T \), then the countably supported iteration \( \Asterisk_{i < \omega} T_{t_i} \) is proper. Moreover, if \( t \) is a node that is not on any of the generic branches added by \( \Asterisk_{i < \omega} T_{t_i} \), then \( T_t \) remains Suslin in the generic extension by \( \Asterisk_{i < \omega} T_{t_i} \).

Similarly, if \( T \) is n-fold Suslin off the generic branch, then any \( \delta < \omega_1 \)-iteration of \( T \) with countable support is proper and thus preserves \( \aleph_1 \). Moreover, the full \( \omega_1 \)-length iteration \( \Asterisk_{i < \omega_1} T \) is proper.

This can be seen by noting that any condition \( \vec{p} \in (\Asterisk_{i < \delta} \check{T}) \) can be strengthened to a condition \( \vec{q} < \vec{p} \), such that there exists a set \( (\dot{t}_i \mid i < \delta) \), where each \( \dot{t}_i \) is a \( \Asterisk_{j < i} T \)-name for a node in \( T \), and:

\[
1 \Vdash_{\Asterisk_{i < \delta} T_i} \{ \dot{t}_i \mid i < \omega_1 \} \text{ is an antichain in } T.
\]

The forcing \( \Asterisk_{i < \delta} T \) below \( \vec{q} \) is equivalent to the forcing \( \Asterisk_{i < \delta} T_{\dot{t}_i} \). Since each \( T \) is n-fold Suslin off the generic branch, this forcing is a countable support iteration of forcings with the ccc, making it proper. Thus, \( \Asterisk_{i < \delta} \check{T} \) is locally proper, meaning that below any condition there is a stronger condition such that the forcing below it is proper. Local properness implies properness, as locally proper forcings preserve stationarity in $[\lambda]^{\omega}$ to which properness is equivalent.

W emphasize that we need countable support for this argument. A finite support iteration of \( \Asterisk_{i < \delta} \check{T} \) would collapse \( \aleph_1 \).

Next, we turn to Jech’s forcing \( \forceP_J \) and the product of \( \forceP_J \)-forcing. We observe that a product of \( \forceP_J \)-forcings will add Suslin trees, and we can destroy these trees without unwanted interference.

\begin{lemma}\label{ManySuslinTrees}
Let \( S \) be a Suslin tree in \( V \), and let \( \forceP \) be a countably supported product of length \( \omega_1 \) of forcings \( \forceP_J \), with \( G \) as its generic filter. Then in \( V[G] \), there is an \( \omega_1 \)-sequence of Suslin trees \( \vec{T} = (T_{\alpha} \mid \alpha \in \omega_1) \) such that for any finite \( e \subset \omega \) with pairwise different members, the tree \( S \times \prod_{i \in e} T_i \) will be a Suslin tree in \( V[G] \).
\end{lemma}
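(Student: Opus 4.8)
The plan is to combine two facts already available: first, a countably supported product of $\sigma$-closed forcings is $\sigma$-closed, and $\sigma$-closed forcings preserve ground model Suslin trees; second, by Lemma~\ref{1 preservation of Suslin trees} together with the remark following the definition of $\forceP_J$, the generic tree $T$ added by a single copy of $\forceP_J$ has the property that $S' \times T$ is Suslin in the extension whenever $S'$ is a Suslin tree of the ground model. I would take $\vec{T} = (T_\alpha \mid \alpha < \omega_1)$ to be the sequence of generic trees read off coordinatewise from $G$, i.e. $T_\alpha = \bigcup G_\alpha$ for $G_\alpha$ the $\alpha$-th coordinate of $G$, and show this sequence works.

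Fix a finite $e = \{j_1,\dots,j_k\} \subset \omega$. Since $\forceP$ is a product, it factors (up to the obvious isomorphism) as $\forceP_J^{(j_1)} \times \cdots \times \forceP_J^{(j_k)} \times \forceP_{\mathrm{rest}}$, where $\forceP_{\mathrm{rest}}$ is the countably supported product of the copies of $\forceP_J$ indexed by $\omega_1 \setminus e$; correspondingly $G$ splits into mutually generic filters $G_{j_1},\dots,G_{j_k},G_{\mathrm{rest}}$. The first step is an induction on $1 \le m \le k$ proving that $S \times T_{j_1} \times \cdots \times T_{j_m}$ is Suslin in $V_m := V[G_{j_1}]\cdots[G_{j_m}]$. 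The base case $m=1$ is exactly the quoted property of $\forceP_J$ applied in $V$. For the inductive step, observe that $V_m$ is a $\sigma$-closed forcing extension of $V$ (a finite product of $\sigma$-closed forcings), so $\forceP_J$ is computed in $V_m$ exactly as in $V$, the filter $G_{j_{m+1}}$ is $\forceP_J$-generic over $V_m$ by mutual genericity of the coordinates, and $T_{j_{m+1}} = \bigcup G_{j_{m+1}}$; applying the property of $\forceP_J$ inside $V_m$ to the $V_m$-Suslin tree $S \times T_{j_1} \times \cdots \times T_{j_m}$ gives that $S \times T_{j_1} \times \cdots \times T_{j_{m+1}}$ is Suslin in $V_{m+1}$.

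After $k$ steps, $S \times \prod_{i \in e} T_i$ is Suslin in $V_k$. The second step passes from $V_k$ to $V[G]$: here $G_{\mathrm{rest}}$ is $\forceP_{\mathrm{rest}}$-generic over $V_k$, and $\forceP_{\mathrm{rest}}$, as a countably supported product of $\sigma$-closed forcings, is $\sigma$-closed in $V$, hence also in $V_k$ (the $\sigma$-closed extension $V_k$ adds no new $\omega$-sequences of conditions of the $V$-poset $\forceP_{\mathrm{rest}}$). Therefore forcing with $\forceP_{\mathrm{rest}}$ preserves the Suslin tree $S \times \prod_{i \in e} T_i$, and so it remains Suslin in $V_k[G_{\mathrm{rest}}] = V[G]$. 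Running the same argument with $e$ an arbitrary singleton $\{\alpha\}$, $\alpha < \omega_1$, shows in particular that each $T_\alpha$ is Suslin in $V[G]$, so $\vec{T}$ is as required.

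The argument is largely bookkeeping; the points to watch — none of them serious — are that the countably supported product genuinely factors into a finite product times a $\sigma$-closed co-finite remainder, and that in the induction the tree $T_{j_{m+1}}$ obtained by forcing over the intermediate model $V_m$ is literally the $(j_{m+1})$-th entry of $\vec{T}$, which is immediate from mutual genericity of the coordinates of a product. I would regard the induction iterating the preservation property of $\forceP_J$ as the conceptual core, though it is routine given that property.
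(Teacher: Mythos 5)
Your proof is correct; the paper states Lemma \ref{ManySuslinTrees} without proof, so there is nothing to compare against, but your argument --- reading off $T_\alpha$ coordinatewise, factoring the product into the finitely many coordinates in $e$ times a $\sigma$-closed remainder, iterating the ``$S\times T$ stays Suslin'' property of $\forceP_J$ through the finite part, and then invoking preservation of Suslin trees by $\sigma$-closed forcing --- is exactly the standard way to fill this in, and all the delicate points (mutual genericity, $\forceP_J$ and the remainder product being absolute between $V$ and the intermediate $\sigma$-closed extensions) are handled correctly.
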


We define a sequence of Suslin trees as follows:

\begin{definition}
Let \( \vec{T} = (T_{\alpha} \mid \alpha < \kappa) \) be a sequence of Suslin trees. We say that the sequence is an \emph{independent family of Suslin trees} if, for every finite set of pairwise distinct indices \( e = \{ e_0, e_1, \dots, e_n \} \subset \kappa \), the product \( T_{e_0} \times T_{e_1} \times \cdots \times T_{e_n} \) is a Suslin tree again.
\end{definition}

We summarize the previous results as follows:

\begin{theorem}
Let \( \forceP \) be the countably supported product of Jech’s forcing \( \forceP_J \). Then \( \forceP \) adds an \( \omega_1 \)-sequence \( \vec{S} = (S_{\alpha} \mid \alpha < \omega_1) \) of independent Suslin trees with the following properties:
\begin{enumerate}
  \item Every \( S_{\alpha} \in \vec{S} \) is \( n \)-fold Suslin off the generic branch for every \( n \in \omega \),
  \item For every \( \alpha < \omega_1 \) and every \( S_{\alpha} \)-antichain of nodes \( \{ s_i \mid i < \delta < \omega_1 \} \subset S_{\alpha} \), the set:

\[
(\vec{S} \setminus \{ S_{\alpha} \}) \cup \{ (S_{\alpha})_{s_i} \mid i < \delta \}
\]

is an independent set of Suslin trees.
\end{enumerate}
\end{theorem}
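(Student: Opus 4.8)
The plan is to let $\vec S = (S_\alpha \mid \alpha < \omega_1)$ be the sequence of generic Suslin trees added coordinatewise by $\forceP$: if $G$ is $\forceP$-generic with $\alpha$-th coordinate $G_\alpha$, put $S_\alpha := \bigcup G_\alpha$. The single observation that drives everything is that, for each fixed $\alpha$, the product $\forceP$ factors as $\forceP_J \times \forceP^{\neq\alpha}$ with $\forceP^{\neq\alpha} := \prod_{\beta\neq\alpha}\forceP_J$ (countable support) being $\sigma$-closed, and that this $\sigma$-closedness is \emph{robust}: the tower-union argument witnessing $\sigma$-closedness of $\forceP_J$ goes through verbatim over any model containing $\forceP_J$, so $\forceP^{\neq\alpha}$ stays $\sigma$-closed over any $\omega_1$-preserving (in particular any ccc) extension of $V[G_\alpha]$. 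Combined with the facts that $\sigma$-closed forcing preserves Suslin trees (used already in the proof of Lemma~\ref{1 preservation of Suslin trees}) and that $S_\alpha^n \in V[G_\alpha]$ commutes with $\forceP^{\neq\alpha}$, this means each $S_\alpha$ behaves in $V[G]$ exactly as it does in $V[G_\alpha]$ as far as Suslinity is concerned. That $\vec S$ is an independent family of Suslin trees is then exactly Lemma~\ref{ManySuslinTrees}.

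For (1), fix $\alpha$ and $n$. Forcing with $S_\alpha^n$ over $V[G] = V[G_\alpha][\forceP^{\neq\alpha}]$ yields $V[G_\alpha][S_\alpha^n][\forceP^{\neq\alpha}]$, since $S_\alpha^n$ lies in $V[G_\alpha]$. By the theorem of Fuchs and Hamkins quoted above, in $V[G_\alpha][S_\alpha^n]$ the tree $(S_\alpha)_t$ is Suslin for every node $t$ off all $n$ generic branches; as $\forceP^{\neq\alpha}$ is still $\sigma$-closed over that model it preserves this Suslin tree, so $(S_\alpha)_t$ remains Suslin in $V[G][S_\alpha^n]$. That is (1).

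For (2) the content lies in the interaction between subtrees of $S_\alpha$ and the other $S_\beta$'s, and here the $\sigma$-closedness of $\forceP^{\neq\alpha}$ is used a second, more essential time. Because every antichain of a Suslin tree is countable and $\forceP^{\neq\alpha}$ is $\sigma$-closed over $V[G_\alpha]$, the given antichain $\{s_i \mid i<\delta\} \subseteq S_\alpha$ in fact lies in $V[G_\alpha]$; this reflection is what lets us apply Fuchs--Hamkins directly to it. Now take a finite subfamily $\{S_\beta \mid \beta\in f\} \cup \{(S_\alpha)_{s_i} \mid i\in e'\}$ with $f \subseteq \omega_1\setminus\{\alpha\}$ and $e'\subseteq\delta$ finite, and set $U := \prod_{\beta\in f}S_\beta$, a Suslin tree in $V[G]$ by independence. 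Since $U$ is ccc in $V[G]$ it suffices to check that $\prod_{i\in e'}(S_\alpha)_{s_i}$ remains ccc in $V[G][h]$ for $h$ that is $U$-generic over $V[G]$ (a finite product of Suslin trees being ccc iff it is Suslin). Reindex $\forceP$ as $\forceP_J^{(\alpha)} \times \forceP_f \times \forceP'$, where $\forceP_f = \prod_{\beta\in f}\forceP_J$ and $\forceP' = \prod_{\beta\notin f\cup\{\alpha\}}\forceP_J$; a routine mutual-genericity computation, using that $U$ is a poset in $V[\forceP_f]$, gives $V[G][h] = V[\forceP_f][h][G_\alpha][g']$ where $G_\alpha$ is $\forceP_J$-generic over $V[\forceP_f][h]$ and $g'$ is $\forceP'$-generic (hence, by robustness, $\sigma$-closed) over $V[\forceP_f][h][G_\alpha]$. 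Applying Fuchs--Hamkins to $S_\alpha$ in $V[\forceP_f][h]$, the tree $S_\alpha$ is $k$-fold Suslin off the generic branch for every $k$ in $V[\forceP_f][h][G_\alpha]$; since the $s_i$ are pairwise incomparable and $\{s_i\}\in V[G_\alpha]$ is available there, the finite version of the remark following the definition of ``$n$-fold Suslin off the generic branch'' (peel off one factor $(S_\alpha)_{s_i}$ at a time) shows $\prod_{i\in e'}(S_\alpha)_{s_i}$ is ccc in $V[\forceP_f][h][G_\alpha]$. Finally the $\sigma$-closed $\forceP'$ preserves this, so $\prod_{i\in e'}(S_\alpha)_{s_i}$ is ccc in $V[G][h]$, and (2) follows.

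The step I expect to be the main obstacle is the bookkeeping in (2): turning the reindexing of the product into a genuine mutual-genericity statement, verifying that the various $\sigma$-closed factors stay $\sigma$-closed over the intermediate models, and — most importantly — keeping precise track of where the antichain $\{s_i\}$ lives. Reflecting $\{s_i\}$ down into $V[G_\alpha]$ by $\sigma$-closedness is the linchpin, since otherwise Fuchs--Hamkins cannot be applied over the reorganized model in which $S_\alpha$ is added last.
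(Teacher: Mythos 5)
The paper offers no proof of this theorem at all --- it is stated as a ``summary of the previous results'' (Lemma~\ref{ManySuslinTrees}, the Fuchs--Hamkins theorem and the remarks following it), so there is nothing to compare against line by line. Your architecture is the intended one: factor $\forceP$ as $\forceP_J\times\forceP^{\neq\alpha}$, apply Fuchs--Hamkins on the distinguished coordinate, let the complementary factor preserve what was established, and quote Lemma~\ref{ManySuslinTrees} for independence. The product-lemma rearrangements in (1) and (2) are correct, as is the ``peel off one subtree at a time'' reduction of Suslinity of $\prod_{i\in e'}(S_\alpha)_{s_i}$ to $|e'|$-fold Suslinity off the generic branch.

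There is, however, one genuinely false step: the claim that the $\sigma$-closedness of $\forceP^{\neq\alpha}$ (and of $\forceP'$) is ``robust'' over any $\omega_1$-preserving, \emph{in particular any ccc}, extension. The tower-union argument produces a lower bound built \emph{from} the descending sequence; if the sequence lives only in the outer model, that lower bound need not belong to $V$ and hence need not be a condition of $\forceP_J^V$. Indeed, after adding a Cohen real $c$ one can choose a strictly descending $\omega$-chain of conditions of $\forceP_J^V$ whose union codes $c$, and any end-extension of all of them computes $c$, so $\forceP_J^V$ has \emph{no} lower bound for that chain and is no longer $\sigma$-closed. So the step as justified would fail. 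It is reparable precisely because none of the intermediate forcings you pass through adds new $\omega$-sequences of ordinals: $\forceP_J$ is $\sigma$-closed; a Suslin tree, and a finite product of the $S_\beta$'s which is Suslin by independence, is $\omega$-distributive; and $S_\alpha^n$ is $\omega$-distributive over $V[G_\alpha]$ because the tuples with pairwise incomparable coordinates are dense in $S_\alpha^n$ and below such a tuple $S_\alpha^n$ is a Suslin tree by Fuchs--Hamkins plus the peeling argument. Hence every descending $\omega$-chain of conditions of $\forceP^{\neq\alpha}$ appearing in the relevant intermediate model already lies in $V$, and $\sigma$-closedness genuinely persists. You should replace the ``robustness'' claim by this distributivity argument. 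A minor further remark: in (2) you only ever use finite subsets $\{s_i\mid i\in e'\}$ of the antichain, and a finite set of nodes is trivially in $V[G_\alpha]$; so the reflection of the whole antichain into $V[G_\alpha]$, which you call the linchpin, is not actually load-bearing.
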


\subsection{The ground model $W$ of the iteration}
We have to first create a suitable ground model $W$ over which the actual iteration will take place. $W$ will be a generic extension of $L$ which has no new reals. Moreover $W$ has the crucial property that in $W$ there is an $\omega_1$-sequence $\vec{S}$ of $\omega_1$ trees which is $\Sigma_1(\{\omega_1\})$-definable over $H(\omega_2)^W$ (i.e. the definiton is a $\Sigma_1$-formula with $\omega_1$ as its only parameter) and which forms an independent sequence of Suslin trees in an inner model of $W$. The sequence $\vec{S}$ will enable a coding method we will use throughout this article all the time.

To form $W$, we start with G\"odels constructible universe $L$ as our 
ground model.
We first fix an appropriate sequence of stationary, co-stationary subsets of $\omega_1$ as follows.
Recall that $\diamondsuit$ holds in $L$, i.e. over $L_{\omega_1}$ there is a 
$\Sigma_1$-definable sequence $(a_{\alpha} \, : \, \alpha < \omega_1)$ of countable subsets of $\omega_1$
such that any set $A \subset \omega_1$ is guessed stationarily often by the $a_{\alpha}$'s, i.e.
$\{ \alpha < \omega_1 \, : \, a_{\alpha}= A \cap \alpha \}$ is a stationary and co-stationary subset of $\omega_1$.
The $\diamondsuit$-sequence can be used to produce an easily definable sequence of stationary, co-stationary subsets: we list the reals in $L$ in an $\omega_1$ sequence $(r_{\alpha} \, : \, \alpha < \omega_1)$, and let $\tilde{r}_{\alpha} \subset \omega_1$ be the unique element of $2^{\omega_1}$ which copies $r_{\alpha}$ on its first $\omega$-entries followed by $\omega_1$-many 0's. Then, identifying $\tilde{r}_{\alpha} \in 2^{\omega_1}$ with the according subset of $\omega_1$, we define for every $\beta < \omega_1$
a stationary, co-stationary set in the following way:
\[R'_{\beta} = \{ \alpha < \omega_1 \, : \, a_{\alpha}= \tilde{r}_{\beta} \cap \alpha \}.\] It is clear that $\forall \alpha \ne \beta (R'_{\alpha} \cap R'_{\beta} \in \hbox{NS}_{\omega_1})$ and we obtain a sequence of pairwise disjoint stationary sets as usual via setting for every $\beta < \omega_1$ \[R_{\beta}= R'_{\beta} \backslash \bigcup_{\alpha < \beta} R'_{\alpha}.\] and let $\vec{R}=(R_{\alpha} \, : \, \alpha < \omega_1)$. Via picking out one element of $\vec{R}$ and re-indexing we assume without loss of generality that there is a stationary, co-stationary $R \subset \omega_1$, which has pairwise empty intersection with every $R_{\beta} \in \vec{R}$. 
Note that for any $\beta < \omega_1$, membership in $R_{\beta}$ is uniformly $\Sigma_1$-definable over the model $L_{\omega_1}$, i.e. there is a $\Sigma_1$-formula $\psi(x,y)$ such that for every $\beta < \omega_1$
$\alpha \in R_{\beta} \Leftrightarrow L_{\omega_1} \models \psi(\alpha, \beta)$.

We proceed with adding $\aleph_1$-many Suslin trees using of Jech's Forcing $ \forceP_J$. We let 
\[\forceQ^0 = \prod_{\beta \in \omega_1} \forceP_J \] using countable support. This is a $\sigma$-closed, hence proper notion of forcing. We denote the generic filter of $\forceQ^0$ with $\vec{S}=(S_{\alpha} \, : \, \alpha < \omega_1)$ and note that by Lemma \ref{ManySuslinTrees} $\vec{S}$ is independent.  We fix a definable bijection between $[\omega_1]^{\omega}$ and $\omega_1$ and identify the trees in $(S_{\alpha }\, : \, \alpha < \omega_1)$ with their images under this bijection, so the trees will always be subsets of $\omega_1$ from now on. 

We shall single out the first tree of $\vec{S}$, as later we will use this tree in a different way than all the other trees, namely to generically produce sets of indices where some coding will take place. For this reason we will re-index $\vec{S}= (S_{\alpha} \mid \alpha < \omega_1)$ in defining $S'_{-1}:= S_0$ and $\vec{S}':= (S_{\alpha} \mid 1 \le \alpha < \omega_1)$. To ease notation we will write $\vec{S}$ for the just defined $\vec{S}'$ again. That is \[\vec{S}= (S_{\alpha} \mid \alpha < \omega_1)\] is an independent sequence of Suslin trees and $S_{-1}$ is another Suslin tree such that
\[\vec{S} \cup S_{-1}\] still forms an independent sequence.

We work in $L[\forceQ^0]$ and will define the second block of forcings as follows: we add a generic branch through each element in $\vec{S}$, but add no branch through $S_{-1}$, that is we let
\[\forceQ^1= \prod_{\beta < \omega_1} S_{\beta}. \]
Note that by the argument from the proof of lemma \ref{1 preservation of Suslin trees}, this forcing has a dense subset which is $\sigma$-closed. Hence $L[\forceQ^0][\forceQ^1]$ is a proper and $\omega$-distributive generic extension of $L$.

In a third step we code the trees from $\vec{S} \cup S_{-1}$ into the sequence of $L$-stationary subsets $\vec{R}$ we produced earlier, using Lemma \ref{coding with stationary sets}. It is important to note, that the forcing we are about to define does preserve Suslin trees, a fact we will show later.
The forcing used in the third step will be denoted by $\mathbb{Q}^2$ and will itself be a countable support iteration of length $\omega_1 \cdot \omega_1$ whose components are countable support iteration themselves. First, fix a definable bijection $h \in L_{\omega_2}$ between $\omega_1 \times \omega_1$ and $\omega_1 \cup \{-1\}$ and write $\vec{R}$ from now on in ordertype $\omega_1 \cdot \omega_1$ making implicit use of $h$, so we assume that $\vec{R}= (R_{\alpha} \, : \, \alpha < \omega_1 \cdot \omega_1)$. We let $\alpha \in \omega_1 \cup \{ -1 \}$ and consider the tree $S_{\alpha} \subset \omega_1$. Defining the $\alpha$-th factor of our iteration $\forceQ^2$, we let $\forceQ^2 (\alpha)$ be the countable support iteration which codes the characteristic function of $S_{\alpha}$ into the $\alpha$-th $\omega_1$-block of the $R_{\beta}$'s just as in Lemma \ref{coding with stationary sets}. So $\forceQ^2 (\alpha)$ is a countable support iteration whose factors, denoted by $\forceQ^2(\alpha) (\gamma)$ are defined via
\[ \forall \gamma < \omega_1 \,(\forceQ (\alpha) (\gamma)= \dot{\forceP}_{\omega_1 \backslash R_{\omega_1 \cdot \alpha + 2 \gamma +1}}) \text{ if } S_{\alpha} (\gamma) =0 \]
and
\[ \forall \gamma < \omega_1 \, (\forceQ( \alpha) (\gamma)= \dot{\forceP}_{\omega_1 \backslash R_{\omega_1 \cdot \alpha + 2 \gamma}}) \text{ if } S_{\alpha} (\gamma) =1. \]

Recall that we let $R$ be a stationary, co-stationary subset of $\omega_1$ which is disjoint from all the $R_{\alpha}$'s which are used. It follows from Lemma \ref{coding with stationary sets} that for every $\alpha \in \omega_1 \cup \{-1\}$, $\forceQ^2 (\alpha)$ is an $R$-proper forcing which additionally is $\omega$-distributive.  Then we let $\mathbb{Q}^2$ be the countably supported iteration, $$\mathbb{Q}^2=\Asterisk_{\alpha \in  \omega_1 \cup \{-1\} } \forceQ^2 (\alpha)$$ which is again $R$-proper (and $\omega$-distributive as we shall see later).
This way we can turn the generically added sequence of trees $\vec{S}$ into a definable sequence of trees.
Indeed, if we work in $L[\vec{S}\ast \vec{b} \ast G]$, where $\vec{S} \ast\vec{b} \ast  G$ is $\forceQ^0 \ast \mathbb{Q}^1 \ast \forceQ^2$-generic over $L$, then, as seen in Lemma \ref{coding with stationary sets} 
\begin{align*}
\forall \alpha, \gamma < \omega_1 (&\gamma \in S_{\alpha} \Leftrightarrow R_{\omega_1 \cdot \alpha + 2 \cdot \gamma} \text{ is not stationary and} \\ &
\gamma \notin S_{\alpha} \Leftrightarrow  R_{\omega_1 \cdot \alpha + 2 \cdot \gamma +1} \text{ is not stationary})
\end{align*}
Note here that the above formula can be written in a $\Sigma_1(\{\omega_1\} )$-way (i.e. it can be written as a $\Sigma_1$ formula with the ordinal $\omega_1$ as the only parameter), as  it reflects down to $\aleph_1$-sized, transitive models of $\ZFP$ which contain a club through exactly one element of every pair $\{(R_{\alpha}, R_{\alpha+1}) \, : \, \alpha < \omega_1\}$.

Our goal is to use $\vec{S}$ for coding. For this it is essential, that the sequence remains independent in the inner  universe $L[\forceQ^0 \ast \forceQ^2]$. Note that this is reasonable as $\forceQ^0 \ast \forceQ^1 \ast \forceQ^2$ can be written as $\forceQ^0 \ast ( \forceQ^1 \times \forceQ^2)$, hence one can form the inner model $L[\forceQ^0 \ast \forceQ^2]$ without problems.

The following  line of reasoning is similar to \cite{Ho}.
Recall that for a forcing $\forceP$ and $M \prec H(\theta)$, a condition $q \in \forceP$ is $(M,\forceP)$-generic iff for every maximal antichain $A \subset \forceP$, $A \in M$, it is true that $ A \cap M$ is predense below $q$.
The key fact is the following (see \cite{Miyamoto2} for the case where $\forceP$ is proper, see e.g. \cite{Ho2} or \cite{Ho3} for a proof)
\begin{lemma}\label{preservation of Suslin trees}
 Let $T$ be a Suslin tree, $R \subset \omega_1$ stationary and $\forceP$ an $R$-proper
 poset. Let $\theta$ be a sufficiently large cardinal.
 Then the following are equivalent:
 \begin{enumerate}
  \item $\Vdash_{\forceP} T$ is Suslin
 
  \item if $M \prec H_{\theta}$ is countable, $\eta = M \cap \omega_1 \in R$, and $\forceP$ and $T$ are in $M$,
  further if $p \in \forceP \cap M$, then there is a condition $q<p$ such that 
  for every condition $t \in T_{\eta}$, 
  $(q,t)$ is $(M, \forceP \times T)$-generic.
 \end{enumerate}

\end{lemma}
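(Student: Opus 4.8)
The plan is to prove the equivalence by a standard argument relating genericity for the product $\forceP \times T$ to the preservation of Suslinity. The crucial point is that a Suslin tree $T$, viewed as a forcing, is ccc of size $\aleph_1$, and that forcing with $T$ is equivalent to adding a cofinal branch; moreover, the generic branch through $T$ passes through every level, so for a countable $M \prec H_\theta$ with $M \cap \omega_1 = \eta$, the nodes of $T$ at level $\eta$ are precisely the candidates for ``$T$-generic over $M$'' conditions.

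\medskip

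\noindent\textit{From (1) to (2).} Assume $\Vdash_\forceP T$ is Suslin, and fix a countable $M \prec H_\theta$ with $\eta := M \cap \omega_1 \in R$ and $\forceP, T \in M$, and $p \in \forceP \cap M$. Since $\forceP$ is $R$-proper and $\eta \in R$, pick $q < p$ that is $(M, \forceP)$-generic. I claim this same $q$ works: for every $t \in T_\eta$, $(q,t)$ is $(M, \forceP \times T)$-generic. Let $A \in M$ be a maximal antichain of $\forceP \times T$; we must show $A \cap M$ is predense below $(q,t)$. Work in $V^\forceP$ below $q$: because $q$ is $(M,\forceP)$-generic, $M[\dot G_\forceP] \cap V = M$ and $M[\dot G_\forceP] \prec H_\theta^{V[\dot G_\forceP]}$ in the relevant sense; the antichain $A$ projects, in $V[\dot G_\forceP]$, to a maximal antichain $A_{\dot G_\forceP} := \{ t : \exists r \in \dot G_\forceP\, (r,t) \in A \}$ of $T$, and this set lies in $M[\dot G_\forceP]$. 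Since $\Vdash_\forceP T$ is Suslin, $A_{\dot G_\forceP}$ is a countable maximal antichain, hence by elementarity it is contained in $M[\dot G_\forceP]$, and every node of $T$ at level $\eta$ extends some element of it. In particular $t \in T_\eta$ extends some $t' \in A_{\dot G_\forceP} \cap M[\dot G_\forceP]$, witnessed by some $r \in \dot G_\forceP$ with $(r,t') \in A$; by genericity of $q$ and elementarity one arranges $r \in M$ and $(r,t') \in A \cap M$. Running this argument inside the forcing and pulling back shows $A \cap M$ is predense below $(q,t)$, as required. (One must be a little careful that the reflection of $A$ into $M$ happens in $V$, not merely in $V^\forceP$; this is handled by noting $A \in M \subseteq M[\dot G_\forceP]$ and that maximal antichains of $\forceP \times T$ of size $\aleph_1$ reflect.)

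\medskip

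\noindent\textit{From (2) to (1).} Suppose (2) holds but, towards a contradiction, $\forceP$ does not force $T$ to be Suslin. Then there is $p \in \forceP$ and a $\forceP$-name $\dot A$ with $p \Vdash ``\dot A$ is a maximal antichain in $T$ of size $\aleph_1$'' (an uncountable antichain can be thinned to one of size exactly $\aleph_1$, and being a maximal antichain in an $\omega_1$-tree of size $\aleph_1$ is absolute enough). Consider the auxiliary set $B = \{ (r, t) : r \le p,\ r \Vdash \check t \in \dot A \}$; a density argument shows $B$ is predense below $(p, \mathbf{1}_T)$ in $\forceP \times T$, so extend it to a maximal antichain $A$ of $\forceP \times T$ that decides things appropriately. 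Now take a countable $M \prec H_\theta$ with $\eta = M \cap \omega_1 \in R$ containing $\forceP, T, p, \dot A, A$. By (2), get $q < p$ such that $(q,t)$ is $(M,\forceP\times T)$-generic for every $t \in T_\eta$. Fix any $t^* \in T_\eta$. Predensity of $A \cap M$ below $(q, t^*)$, combined with the form of $A$, forces that below $q$ the name $\dot A$ is decided by conditions in $M$ to contain a node at some level $< \eta$ below $t^*$ — that is, $q \Vdash \dot A \cap M \cap V$ is predense in $T$ below $t^*$. Since $t^* \in T_\eta$ was arbitrary and the nodes below level $\eta$ appearing here all lie in $M$, we conclude $q \Vdash \dot A \subseteq M$, so $q \Vdash |\dot A| \le |M \cap \omega_1| = \aleph_0$, contradicting $q < p \Vdash |\dot A| = \aleph_1$.

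\medskip

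\noindent The main obstacle is the bookkeeping in the $(1)\Rightarrow(2)$ direction: one needs to pass from a maximal antichain of the \emph{product} $\forceP \times T$ sitting in $M$ to a statement about a maximal antichain of $T$ in the $\forceP$-extension, use Suslinity there (which bounds its size by $\aleph_1$ and hence, by elementarity, places it inside $M[\dot G_\forceP]$), and then pull this back to predensity of $A \cap M$ in $V$. This requires the observation that $(M,\forceP)$-genericity of $q$ gives $M[\dot G_\forceP] \cap \mathrm{Ord} = M \cap \mathrm{Ord}$ and that $T_\eta \ne \emptyset$ with every node at level $\eta$ extending a node of any countable maximal antichain — the latter being exactly the normality and $\omega_1$-height of the Suslin tree. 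As noted, this lemma is due to Miyamoto in the proper case and appears in \cite{Ho2}, \cite{Ho3} in the $R$-proper generality, so I would cite those for the routine verifications and only spell out the product genericity step.
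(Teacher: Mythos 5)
The paper does not actually prove this lemma --- it explicitly defers to Miyamoto \cite{Miyamoto2} for the proper case and to \cite{Ho2}, \cite{Ho3} for the $R$-proper version --- so there is no in-paper argument to compare against. Your sketch is the standard Miyamoto argument, and its overall structure is sound in both directions; the only role of $R$-properness is to supply an $(M,\forceP)$-generic $q\le p$ for models $M$ with $M\cap\omega_1\in R$, which you use correctly.

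Two points need to be made precise before the $(1)\Rightarrow(2)$ direction is a proof. First, you assert that $A_{\dot G}=\{t:\exists r\in\dot G\,(r,t)\in A\}$ is a maximal antichain of $T$ in $V[\dot G]$. Predensity is routine, but that it is an \emph{antichain} requires the observation that if $(r,s),(r',s')\in A$ with $s,s'$ comparable in $T$, then $r\perp r'$, so at most one of $r,r'$ enters $\dot G$; without this, Suslinity bounds only a refining maximal antichain rather than $A_{\dot G}$ itself (which would still suffice, but should be said). Second, the phrase ``by genericity of $q$ and elementarity one arranges $r\in M$'' is the crux and is only gestured at. The clean way: given $(q',t')\le(q,t)$, pick $G\ni q'$; since $A_G$ is countable and is an element of $M[G]$, and since $(M,\forceP)$-genericity of $q$ gives $M[G]\cap V=M$ and $M[G]\prec H_\theta[G]$, the node $s_0\in A_G$ below $t$ lies in $T\upharpoonright\eta\subseteq M$, and the witness $r\in G$ with $(r,s_0)\in A$ can be found in $M[G]\cap\forceP=M\cap\forceP$; compatibility of $(r,s_0)$ with $(q',t')$ is witnessed by a condition in $G$ and is absolute between $V[G]$ and $V$. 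With these two points filled in the direction is correct. The $(2)\Rightarrow(1)$ direction is fine once ``extend $B$ to a maximal antichain that decides things appropriately'' is replaced by: take, inside $M$, a maximal antichain of $\forceP\times T$ below $(p,\mathbf{1}_T)$ contained in the dense open set of conditions extending a member of $B$; the rest of your level-$\eta$ argument then yields $q\Vdash\dot A\subseteq T\upharpoonright\eta$ as you claim.
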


In a similar way, one can show that Theorem 1.3 of \cite{Miyamoto2} holds true if we replace proper by $R$-proper for $R \subset \omega_1$ a stationary subset.
\begin{theorem}
Let $(\forceP_{\alpha})_{\alpha < \eta}$ be a countable support iteration of length $\eta$, let $R \subset \omega_1$ be stationary and suppose that for every $\alpha < \eta$, for the $\alpha$-th factor of the iteration $\dot{\forceP}(\alpha)$ it holds that $\Vdash_{\alpha}  ``\dot{\forceP}(\alpha)$ is $R$-proper and 
preserves every Suslin tree.$"$ Then $\forceP_{\eta}$ is $R$-proper and preserves every Suslin tree.
\end{theorem}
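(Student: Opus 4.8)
The plan is to carry out, mutatis mutandis, Miyamoto's proof of the corresponding statement for proper forcings (\cite{Miyamoto2}, Theorem 1.3): one replaces ``proper'' by ``$R$-proper'' and, wherever a countable elementary submodel $M\prec H_\theta$ is invoked, one restricts to those $M$ with $M\cap\omega_1\in R$. The engine of the induction is Lemma \ref{preservation of Suslin trees}, which is already stated for $R$-proper posets and converts ``$\Vdash_{\forceP}T$ is Suslin'' into a combinatorial statement about such $M$. Precisely, I would establish by induction on $\eta$ the assertion $(\star_\eta)$: for every Suslin tree $T$, every sufficiently large regular $\theta$, every countable $M\prec H_\theta$ with $\delta:=M\cap\omega_1\in R$ and with $\forceP_\eta$, the iteration $(\forceP_\alpha)_{\alpha\le\eta}$, and $T$ in $M$, and every $p\in\forceP_\eta\cap M$, there is $q\le p$ which is $(M,\forceP_\eta)$-generic and such that $(q,t)$ is $(M,\forceP_\eta\times T)$-generic for every $t\in T_\delta$. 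Granting $(\star_\eta)$ for all Suslin trees $T$, the theorem follows: the existence of $(M,\forceP_\eta)$-generic conditions for all such $M$ is $R$-properness, and the stronger conclusion is clause (2) of Lemma \ref{preservation of Suslin trees}, hence gives $\Vdash_{\forceP_\eta}$ ``$T$ is Suslin''. One also records the routine closure fact that if $N\prec H_{\theta'}$ is countable, $N\in M$, and $N\cap\omega_1=M\cap\omega_1$, then $N\cap\omega_1\in R$, so the admissible models are closed under the operations used below.

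The case $\eta=0$ is trivial. For $\eta=\beta+1$ I would compose two generic conditions. First, $(\star_\beta)$ yields an $(M,\forceP_\beta)$-generic $q'\le p\upharpoonright\beta$ with $(q',t)$ being $(M,\forceP_\beta\times T)$-generic for all $t\in T_\delta$; in particular $\Vdash_{\forceP_\beta}$ ``$T$ is Suslin'' by Lemma \ref{preservation of Suslin trees}. Being $(M,\forceP_\beta)$-generic, $q'$ forces that $M[\dot G_\beta]\prec H_\theta^{V[\dot G_\beta]}$ and $M[\dot G_\beta]\cap\omega_1=\delta\in R$ — the latter because a $\forceP_\beta$-name in $M$ for an ordinal below $\omega_1$ is decided below $q'$ by an element of the relevant maximal antichain lying in $M$, so takes a value in $M$. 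Since $q'$ also forces that $\dot\forceP(\beta)$ is $R$-proper and preserves $T$, the implication (1)$\Rightarrow$(2) of Lemma \ref{preservation of Suslin trees}, applied in $V[\dot G_\beta]$ to $\dot\forceP(\beta)$, $T$, and $M[\dot G_\beta]$, together with the maximal principle, produces a $\forceP_\beta$-name $\dot q(\beta)$ with $q'\Vdash$ ``$\dot q(\beta)\le\dot p(\beta)$, and $(\dot q(\beta),t)$ is $(M[\dot G_\beta],\dot\forceP(\beta)\times T)$-generic for every $t\in T_\delta$''. Then $q:=q'{}^\frown\dot q(\beta)$ works; checking that $q$ is $(M,\forceP_{\beta+1})$-generic and that $(q,t)$ is $(M,\forceP_{\beta+1}\times T)$-generic for $t\in T_\delta$ is the usual two-step composition of generic conditions, performed inside $(\forceP_\beta\times T)\ast\dot\forceP(\beta)\cong\forceP_{\beta+1}\times T$, the isomorphism holding because $\dot\forceP(\beta)$ depends only on the $\forceP_\beta$-coordinate.

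For $\eta$ a limit of uncountable cofinality, the set $R$ plays no role and the argument is as in the proper case: any $p\in\forceP_\eta\cap M$ has countable support, whose supremum lies in $M\cap\eta$, so $p\in\forceP_\gamma$ for some $\gamma\in M\cap\eta$; applying $(\star_\gamma)$ and using that each $\forceP_\gamma$ is a complete suborder of $\forceP_\eta$ while $M\cap\eta$ is bounded below $\eta$, one verifies that the resulting $q$ remains $(M,\forceP_\eta)$-generic and $(q,t)$ remains $(M,\forceP_\eta\times T)$-generic.

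The genuinely delicate case — and the main obstacle — is $\eta$ of cofinality $\omega$, where the construction must be carried out Shelah-style by fusion. Fix an increasing sequence $\langle\eta_n:n<\omega\rangle\in M$ cofinal in $\eta$ with $\eta_0=0$, and enumerate in order type $\omega$, listing each item cofinally often, the maximal antichains of $\forceP_\eta$ and of $\forceP_\eta\times T$ that lie in $M$. I would recursively build descending conditions $\langle p_n:n<\omega\rangle$ in $\forceP_\eta\cap M$ with $p_0=p$, together with coherent conditions $q_n\in\forceP_{\eta_n}$ such that $q_n\le p_n\upharpoonright\eta_n$, $q_n$ is $(M,\forceP_{\eta_n})$-generic, $(q_n,t)$ is $(M,\forceP_{\eta_n}\times T)$-generic for every $t\in T_\delta$, and $q_n$ forces $p_n\upharpoonright[\eta_n,\eta)$ to be a condition in the quotient below $p_{n-1}\upharpoonright[\eta_n,\eta)$. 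At step $n$ one exploits that $M$ believes the $n$-th listed antichain to be maximal in order to pick $p_{n+1}\le p_n$ in $M$ whose tail is, below $q_n$, forced below a member of that antichain, and then invokes $(\star_{\eta_{n+1}})$ in the quotient over $V[\dot G_{\eta_n}]$ to extend $q_n$ to $q_{n+1}$. By countable support, $\bigcup_n\operatorname{supp}(q_n)$ is countable, so $q:=\bigcup_n q_n$ is a genuine condition in $\forceP_\eta$, below every $p_n$; coherence together with the bookkeeping then yields that $q$ is $(M,\forceP_\eta)$-generic and that $(q,t)$ is $(M,\forceP_\eta\times T)$-generic for every $t\in T_\delta$. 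The two points that require care are exactly those of Shelah's countable support iteration theorem — ensuring that the partial generic conditions $q_n$ can always be extended, which is where $R$-properness of the tails is used (legitimately, since $M\cap\omega_1=\delta\in R$ throughout), and checking that interleaving the antichains of $\forceP_\eta\times T$ with those of $\forceP_\eta$ does not spoil the product-genericity — and I expect both to go through with nothing more than the notational substitution of ``$R$-proper'' for ``proper''.
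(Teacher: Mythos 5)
Your proposal is correct and is essentially the paper's own approach: the paper gives no proof of this theorem, remarking only that Miyamoto's Theorem 1.3 goes through ``in a similar way'' with proper replaced by $R$-proper, and your outline is exactly that adaptation, with the induction hypothesis $(\star_\eta)$ being clause (2) of Lemma \ref{preservation of Suslin trees} relativized to models $M$ with $M\cap\omega_1\in R$. The successor step via $(\forceP_\beta\times T)\ast\dot\forceP(\beta)\cong\forceP_{\beta+1}\times T$ and the fusion argument at cofinality $\omega$ are the standard Miyamoto/Shelah machinery and carry over as you describe.
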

So in order to argue that our forcing $\forceQ^2$ preserves Suslin trees when used over the ground model $W[\forceQ^0]$, it is sufficient to show that every factor preserves Suslin trees.
This is indeed the case.
\begin{lemma}
Let $R \subset \omega_1$ be stationary, co-stationary, then the club shooting forcing $\forceP_R$ preserves Suslin trees.
\end{lemma}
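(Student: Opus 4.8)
The plan is to use the criterion of Lemma~\ref{preservation of Suslin trees}. Fix a Suslin tree $T$, the club-shooting forcing $\forceP_R$ for a stationary, co-stationary $R\subset\omega_1$, and a sufficiently large $\theta$. Since $\forceP_R$ is $R$-proper, it suffices to verify clause~(2): given a countable $M\prec H_\theta$ with $\eta:=M\cap\omega_1\in R$ and $\forceP_R, T\in M$, and given $p\in\forceP_R\cap M$, we must produce $q<p$ such that for every $t\in T_\eta$ the pair $(q,t)$ is $(M,\forceP_R\times T)$-generic. The point of using $R$ is that a condition $p\in\forceP_R$ is a countable closed function into $R$; since $\eta\in R$, we are allowed to extend $p$ by putting $\eta$ itself into the club at the top, which is exactly what makes the genericity argument go through at the level $\eta$.

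The key step is the construction of $q$. Working inside $M$, enumerate in order type $\omega$ the maximal antichains of $\forceP_R\times T$ that lie in $M$ as $(A_n:n<\omega)$, and also enumerate $T_\eta=\{t_k:k<\omega\}$ (note $T_\eta$ is countable and, since $T\in M$ is a tree of height $\omega_1$, the nodes of $T$ below level $\eta$ all lie in $M$, though the level $\eta$ itself is not in $M$). Using $\sigma$-distributivity/$\omega$-distributivity of $\forceP_R$ — more precisely, the fact that $\forceP_R$ adds no new reals and is $\omega$-distributive, so that below any condition one can meet countably many dense sets while staying in $M$ — build a decreasing sequence of conditions $p=p_0\geq p_1\geq\cdots$ in $\forceP_R\cap M$ together with a bookkeeping that simultaneously handles all pairs $(A_n,t_k)$: at stage $\langle n,k\rangle$, since $A_n$ is a maximal antichain in the product $\forceP_R\times T$ and $t_k\in T$, one finds $p_{m+1}\leq p_m$ in $M$ and a node $s\in T\cap M$ below $t_k$ (i.e. $s$ of level $<\eta$, $s<_T t_k$) such that $(p_{m+1},s)$ is below some element of $A_n$; this is possible because the relevant dense set is in $M$ and $M\models\forceP_R\times T$ has such a refinement — here one uses that $T$ is Suslin in the ground model so all antichains of the product restricted appropriately are handled by ccc-ness of $T$ and properness of $\forceP_R$. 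Let $\delta=\sup\{\mathrm{dom}(p_m)+1:m<\omega\}=\eta$ (arrange the bookkeeping so the domains are cofinal in $\eta$), and set $q=\bigcup_m p_m \cup\{(\eta,\eta)\}$; this is a legitimate condition precisely because $\eta\in R$, its image is closed, and it is a countable function below $\omega_1$. Then for each $t_k\in T_\eta$ and each $A_n\in M$, the witnesses $(p_{m+1},s)$ constructed for $(A_n,t_k)$ satisfy $s<_T t_k$, so $(q,t_k)\leq(p_{m+1},s)$ lies below an element of $A_n\cap M$; hence $A_n\cap M$ is predense below $(q,t_k)$, giving $(M,\forceP_R\times T)$-genericity of $(q,t_k)$ for every $t_k\in T_\eta$.

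I expect the main obstacle to be the simultaneous bookkeeping in the product $\forceP_R\times T$: one must interleave the countably many antichains $A_n$ with the countably many top-level nodes $t_k$ while keeping every intermediate condition inside $M$ and keeping the domains cofinal in $\eta$, and one must make sure that the refining node $s$ chosen for $(A_n,t_k)$ is genuinely below $t_k$ in $T$ (so that it is compatible with $t_k$ once we pass to level $\eta$) rather than merely compatible. This is exactly the place where Suslinity of $T$ (hence that $T$ is ccc and that $T_\eta$ is countable with all proper initial segments inside $M$) and $\omega$-distributivity of $\forceP_R$ (so that no new reals, and in particular no new branches of $T$, are added, and the fusion stays in $M$) are both used. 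Once $q$ is built the verification of clause~(2) is immediate, and then Lemma~\ref{preservation of Suslin trees} yields $\Vdash_{\forceP_R} T$ is Suslin.
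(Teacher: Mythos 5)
The paper itself does not prove this lemma (it refers to \cite{Ho1}, \cite{Ho2}, \cite{Ho3}), but your strategy --- verifying clause (2) of Lemma \ref{preservation of Suslin trees} by a fusion of conditions in $M$ that interleaves the maximal antichains $A_n$ of $\forceP_R\times T$ with the nodes $t_k$ of $T_\eta$, and then capping the union with $(\eta,\eta)$, which is legal exactly because $\eta\in R$ --- is the standard argument from those references, and the overall architecture is correct.

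The one place where you assert rather than prove is the single step you yourself flag as the obstacle: given $p_m\in\forceP_R\cap M$, the antichain $A_n\in M$, and $t_k\in T_\eta$, why is there $p_{m+1}\le p_m$ in $M$ and $s<_T t_k$ of level $<\eta$ with $(p_{m+1},s)$ below an element of $A_n$? Your justification ``the relevant dense set is in $M$'' does not suffice as stated, because $t_k\notin M$, so no density argument carried out inside $M$ can mention $t_k$. The missing argument is: let $B=\{s\in T:\exists p'\le p_m\ (p',s)\text{ lies below an element of }A_n\}$. Then $B\in M$, and $B$ is dense and open (upward closed) in $T$, so the set of $<_T$-minimal elements of $B$ is a maximal antichain of $T$; since $T$ is Suslin \emph{in the ground model}, this antichain is countable and hence contained in $T\restriction\gamma$ for some $\gamma<\omega_1$, and by elementarity such a $\gamma$ can be chosen in $M$, so $\gamma<\eta$. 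Consequently $t_k$, being of level $\eta>\gamma$, lies above some $s\in B$ of level $<\eta$; this $s$ is in $M$, and elementarity then yields a witness $p'\le p_m$ with $p'\in M$. This is precisely where the ccc of $T$ enters, and it is the only nontrivial point of the whole proof; with it inserted, your construction goes through. Two minor quibbles: the conditions $p_m$ stay in $M$ simply because they are chosen by elementarity inside $M$, not because of $\omega$-distributivity of $\forceP_R$ (which plays no role here); and for $q=\bigcup_m p_m\cup\{(\eta,\eta)\}$ to be a condition you need to arrange that both the domains and the maxima of the ranges of the $p_m$ converge to $\eta$, so that the image of $q$ is closed with top point $\eta\in R$.
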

We shall not prove the lemma and instead refer again to \cite{Ho1}, \cite{Ho2} or \cite{Ho3} for details.

Let us set $W:= L[\forceQ^0\ast \forceQ^1  \ast \forceQ^2 ]$ which will serve as our ground model for a second iteration of length $\omega_1$. To summarize the above:

\begin{theorem}\label{MainPropertiesOfW}
The universe $W= L[\forceQ^0 \ast \forceQ^1 \ast \forceQ^2]$ is an $\omega$-distributive generic extension of $L$, in particular no new reals are added and $\omega_1$ is preserved. Moreover $W$ satisfies the following list:
\begin{enumerate}
\item There  is a $\Sigma_1(\{\omega_1\})$-definable, independent sequence of  trees $\vec{S} \cup S_{-1}$ which are Suslin in the inner model $L[\forceQ^0][\forceQ^2]$, yet no tree, except $S_{-1}$ is Suslin in $W$.
\item The tree $S_{-1}$ is n-fold Suslin off the generic branch for every $n \in \omega$.
\item  For every antichain  $\{ s_i \in S \mid i < \delta \} \subset S_{-1}$, the set 
$\vec{S} \cup \{ (S_{-1} )_{s_i} \mid i < \delta \}$ is independent in $L[\forceQ^0][\forceQ^2].$
\end{enumerate}

\end{theorem}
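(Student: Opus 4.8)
The plan is to check the four assertions in turn, in each case tracing the claim back to a preservation result already assembled in the preliminaries; the only real work is bookkeeping of which intermediate model each property is being asserted in. For ``$W$ adds no reals and preserves $\omega_1$'' I would use the rearrangement $\forceQ^0 \ast \forceQ^1 \ast \forceQ^2 \cong \forceQ^0 \ast \forceQ^2 \ast \forceQ^1$, legitimate since over $L[\forceQ^0]$ the posets $\forceQ^1$ and $\forceQ^2$ commute, and treat the three steps in the order $\forceQ^0$, $\forceQ^2$, $\forceQ^1$. The forcing $\forceQ^0$ is $\sigma$-closed. Over $L[\forceQ^0]$ each factor $\forceQ^2(\alpha)$ of $\forceQ^2$ is a countable support iteration of club-shooting posets, which by Lemma \ref{coding with stationary sets} are $R$-proper and $\omega$-distributive, and by the preservation theorem for not adding reals in countable support iterations of (suitably complete) proper $\omega$-distributive forcings --- the point deferred when $\forceQ^2$ was introduced, handled as in \cite{Ho1}, \cite{Ho2}, \cite{Ho3} --- both the factors and the iteration $\forceQ^2$ are $R$-proper and $\omega$-distributive over $L[\forceQ^0]$. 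Finally, over $L[\forceQ^0][\forceQ^2]$ the poset $\forceQ^1 = \prod_{\beta<\omega_1}S_\beta$ still has the $\sigma$-closed dense subset from the proof of Lemma \ref{1 preservation of Suslin trees}, whose description uses only that the $S_\beta$ are normal trees of height $\omega_1$. Composing, $W$ adds no reals over $L$ and preserves $\omega_1$, so $H(\omega_2)^W$ is available for the definability claim.

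For point (1), the $\Sigma_1(\{\omega_1\})$-definition of $\vec{S}\cup S_{-1}$ over $H(\omega_2)^W$ is the equivalence recorded when $\forceQ^2$ was defined: in $W$, for $\alpha\in\omega_1\cup\{-1\}$ and $\gamma<\omega_1$, $\gamma\in S_\alpha$ iff $R_{\omega_1\cdot\alpha+2\gamma}$ is nonstationary iff there is a transitive $M\models\ZFP$ of size $\aleph_1$ with $\omega_1\in M$ containing a club disjoint from exactly one member of each pair $\{R_\beta,R_{\beta+1}\}$ and satisfying ``$R_{\omega_1\cdot\alpha+2\gamma}$ is nonstationary'' --- such an $M$ computes the relevant nonstationarity correctly, and the displayed condition is $\Sigma_1$ in $\omega_1$. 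For Suslinity, $\forceQ^0$ produces, by Lemma \ref{ManySuslinTrees} and the summarising theorem of the preliminaries (after the re-indexing splitting off $S_{-1}$), an independent sequence $\vec{S}\cup S_{-1}$ of Suslin trees in $L[\forceQ^0]$; every finite product of its members is there a single Suslin tree, so by the lemma that club-shooting preserves Suslin trees together with the theorem that $R$-proper countable support iterations of Suslin-tree-preserving forcings preserve Suslin trees, all these finite products survive $\forceQ^2$, whence $\vec{S}\cup S_{-1}$ is still independent and Suslin in $L[\forceQ^0][\forceQ^2]$. In $W = L[\forceQ^0][\forceQ^2][\forceQ^1]$ the $\sigma$-closed-on-a-dense-set forcing $\forceQ^1$ preserves Suslinity of $S_{-1}$, while its generic adds a cofinal branch through each $S_\beta$ with $\beta<\omega_1$; since a normal $\omega_1$-tree with a cofinal branch has an uncountable antichain, no such $S_\beta$ is Suslin in $W$, whereas $S_{-1}$ is.

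For (2) and (3), note first that in $L[\forceQ^0]$ the tree $S_{-1}$ is $\forceP_J$-generic, so by the Fuchs--Hamkins theorem it is $n$-fold Suslin off the generic branch for every $n$, and by the summarising theorem, for every antichain $\{s_i\mid i<\delta\}\subset S_{-1}$ the family $\vec{S}\cup\{(S_{-1})_{s_i}\mid i<\delta\}$ is independent there. Property (3) is the assertion that this independence survives $\forceQ^2$, which follows exactly as above: each finite product drawn from that family is, by the summarising theorem, a single Suslin tree of $L[\forceQ^0]$, hence preserved by $\forceQ^2$. For (2) one unwinds ``$n$-fold Suslin off the generic branch'' for $S_{-1}$ along the lines of the discussion after the Fuchs--Hamkins theorem, reducing the required Suslinity of the subtrees $(S_{-1})_p$ after forcing with $S_{-1}^n$ to the Suslinity of finite products of subtrees of $S_{-1}$; these hold in $L[\forceQ^0]$ by (3), survive $\forceQ^2$ by Suslin-tree preservation, and survive the $\sigma$-closed forcing $\forceQ^1$, giving (2) in $W$. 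The step I expect to cost the most is exactly this transfer of the two combinatorial properties of $S_{-1}$ through $\forceQ^2$: since $\forceQ^2$ is only $R$-proper and $\omega$-distributive, not $\sigma$-closed, one cannot quote the blanket fact that $\sigma$-closed forcings preserve Suslin trees, and must instead re-run the $R$-proper Suslin-tree preservation theorem not only for the trees in $\vec{S}\cup S_{-1}$ but for all finite products (and, for (2), the associated countable support iterations) of their relevant subtrees --- this, together with the complete-properness bookkeeping behind the $\omega$-distributivity of $\forceQ^2$, is where the machinery of \cite{Ho1}, \cite{Ho2}, \cite{Ho3} does the real work.
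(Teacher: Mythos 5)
Your treatment of items (1)--(3) is fine and in fact more detailed than the paper's (which dismisses them as ``clear from the above discussion''), but there are two problems with the first part. The more clear-cut one concerns your reordering $\forceQ^0\ast\forceQ^2\ast\forceQ^1$: over $L[\forceQ^0][\forceQ^2]$ the poset $\forceQ^1=\prod_{\beta<\omega_1}S_\beta$ does \emph{not} have a $\sigma$-closed dense subset. The $\sigma$-closed dense set exhibited in the proof of Lemma \ref{1 preservation of Suslin trees} is a subset of the two-step iteration $\forceP_J\ast\dot{T}$ (pairs consisting of a tree of successor height together with a top node), not of the tree $T$ forced over $V[g]$; once the trees $S_\beta$ are fixed, each of them is a ccc forcing and certainly has no $\sigma$-closed dense subset. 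So your justification that the last step adds no reals and preserves the Suslinity of $S_{-1}$ collapses. The paper avoids this by keeping $\forceQ^0\ast\forceQ^1$ together as the first block (where the $\sigma$-closed dense subset genuinely exists, with the coordinate producing $S_{-1}$ split off as a separate $\sigma$-closed factor through which no branch is added), putting $\forceQ^2$ last for the distributivity argument, and using the product decomposition $\forceQ^0\ast(\forceQ^1\times\forceQ^2)$ only to \emph{form} the inner model $L[\forceQ^0][\forceQ^2]$ in which the Suslinity and independence statements of (1)--(3) are asserted.

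The second issue is that you black-box the one genuinely new computation in the proof, namely the $\omega$-distributivity of $\forceQ^2$, by appeal to a ``preservation theorem for countable support iterations of (suitably complete) proper $\omega$-distributive forcings.'' No such theorem is quoted in the paper, and without verifying a completeness hypothesis the blanket statement is false: countable support iterations of proper $\omega$-distributive forcings can add reals. The paper instead gives a direct fusion argument which you should reproduce: given $p\Vdash\text{``}\dot{r}\text{ is a countable sequence of ordinals''}$, take a countable $M\prec H(\omega_3)$ containing the relevant objects with $\delta:=M\cap\omega_1\in R$, build inside $M$ a decreasing sequence $(p_n)_n$ deciding the values $\dot{r}(n)$ so that for each coordinate $\alpha$ in the supports the ordinals $\max(p_n(\alpha))$ converge to $\delta$, and then seal $q'=\bigcup_n p_n$ by appending $\delta$ as the top point of every nontrivial coordinate. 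This produces a legitimate condition precisely because $R$ was chosen disjoint from every $R_\beta$ used in $\forceQ^2$, so $\delta\in\omega_1\setminus R_\beta$ for each relevant $\beta$ and each coordinate remains a closed subset of the required target set. This is where the choice of the auxiliary stationary set $R$ earns its keep, and it is the step your proposal defers to the references rather than carrying out.
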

\begin{proof}
To see that $W$ has no new reals uses a standard argument. As $\forceQ^0 \ast \forceQ^1$ does not add any reals it is sufficient to show that $\forceQ^2$ is $\omega$-distributive in $L[\forceQ^0][\forceQ^1]$.
Let $p \in \forceQ^2$ be a condition and assume that $p \Vdash ``\dot{r} $ is a countable sequence of ordinals$"$. We shall find a stronger $q < p$ and a set $r$ in the ground model such that $q \Vdash \check{r}=\dot{r}$. Let $M \prec H(\omega_3)$ be a countable elementary submodel which contains $p, \forceQ^2$ and $\dot{r}$ and such that $M \cap \omega_1 \in R$, where $R$ is our fixed stationary set from above. Inside $M$ we recursively construct a decreasing sequence $p_n$ of conditions in $\forceQ^2$, such that for every $n$ in $\omega,$ $p_n \in M$, $p_n$ decides $\dot{r}(n)$ and for every $\alpha$ in the support of $p_n$, the sequence $\operatorname{sup}_{n \in \omega} \operatorname{max}( p_n(\alpha))$ converges towards $M \cap \omega_1$ which is in $R$. Now, $q':= \bigcup_{n \in \omega} p_n$ and for every $\alpha< \omega_1$ such that $q'(\alpha)\ne 1$ (where 1 is the weakest condition of the forcing),  in other words for every $\alpha$ in the support of $q'$ we define $q(\alpha):= q'(\alpha) \cup \{(\omega,sup (M \cap \omega_1))\}$ and $q(\alpha)=1$ otherwise. Then $q=(q(\alpha))_{\alpha < \omega_1}$ is a condition in $\forceQ^2$, as can be readily verified and $q \Vdash \dot{r} = \check{r}$, as desired.

The first assertion should be clear from the above discussion. 
\end{proof}


\begin{lemma}
Let $S$ be $n$-fold Suslin off the generic branch for every $n \in \omega$. 
Then any iteration of the form
\[ \forceP:= \Asterisk_{i < \delta < \omega_1} (\check{S} \ast \dot{\forceQ}_i) \]
where for every $i < \delta$, $\Vdash_{\forceP_i} ``\dot{\forceQ}_i$ is proper and preserves Suslin trees$"$, using countable support is a proper forcing.
\end{lemma}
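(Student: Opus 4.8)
The plan is to show that $\forceP$ is \emph{locally proper} — that below every condition there is a stronger one below which the forcing is proper — and then to invoke the implication ``locally proper $\Rightarrow$ proper'' recalled above, which runs through preservation of stationary subsets of $[\lambda]^{\omega}$. So fix $\vec p \in \forceP$; the task is to produce $\vec q \le \vec p$ such that $\forceP$ below $\vec q$ is proper.

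The first step thins out $\vec p$. Imitating the treatment of the pure iteration $\Asterisk_{i < \delta} \check S$ sketched above, I build $\vec q \le \vec p$ by a fusion-style recursion along the (countable, as $\delta < \omega_1$) support of $\vec p$: at coordinate $i$ I strengthen the $\check S$-component of $\vec p(i)$ to a $\forceP_i$-name $\dot s_i$ for a node of $S$, using that $S$ is a normal — hence splitting and everywhere extendible — tree together with the genericity of the branches $b_j$ ($j < i$) added by the earlier $\check S$-factors, so that $\vec q$ forces $b_i$ to run through $\dot s_i$ and forces $(\dot s_i \mid i < \delta)$ to be an antichain of $S$; the $\dot\forceQ_i$-components are carried along unchanged. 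Arranging that the $\dot s_i$ become pairwise incompatible while still extending the prescribed conditions is the fiddliest piece of bookkeeping, and is the first place where care is needed.

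Given such a $\vec q$, below it every $i$-th $\check S$-component is densely forced to extend $\dot s_i$, so $\forceP$ below $\vec q$ is isomorphic to a cone of the countable support iteration $\Asterisk_{i < \delta} (S_{\dot s_i} \ast \dot\forceQ_i)$, where $S_{\dot s_i}$ denotes forcing with $S$ below the node $\dot s_i$ and the name $\dot\forceQ_i$ is transported along the isomorphism of initial segments; it therefore suffices to show that this iteration is proper. By the countable support iteration theorem for proper forcings it is enough to check that each iterand $S_{\dot s_i} \ast \dot\forceQ_i$ is forced to be proper by $\Asterisk_{j < i}(S_{\dot s_j} \ast \dot\forceQ_j)$. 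The $\dot\forceQ_i$-factor is proper by the hypothesis of the lemma (read off through the above isomorphism), so the whole matter reduces to showing that $S_{\dot s_i}$ is forced to be Suslin — hence ccc, hence proper — by $\Asterisk_{j < i}(S_{\dot s_j} \ast \dot\forceQ_j)$.

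I would prove this last claim by induction on $i$, in the strengthened form: $\Asterisk_{j < i}(S_{\dot s_j} \ast \dot\forceQ_j)$ is proper and forces $S_{\dot t}$ to be Suslin for every name $\dot t$ forced to be a node of $S$ lying off every branch $b_j$, $j < i$. At a successor step one first applies the inductive hypothesis to $\dot t = \dot s_i$ — legitimate since the $\dot s_j$ form an antichain, so $\dot s_i$ is off all earlier $b_j$ — to obtain $S_{\dot s_i}$ Suslin in $V^{\Asterisk_{j<i}(S_{\dot s_j}\ast\dot\forceQ_j)}$, whence $S_{\dot s_i}\ast\dot\forceQ_i$ and the iteration up to $i+1$ are proper; and for the preservation clause one pushes a node $\dot t$ off all of $b_0, \dots, b_i$ through the two new factors, the branch-forcing $S_{\dot s_i}$ keeping $S_{\dot t}$ Suslin because the combined effect of the branch-forcings $S_{\dot s_0}, \dots, S_{\dot s_i}$ is controlled by the ``$n$-fold Suslin off the generic branch'' property of $S$ in the ground model — and its countably-iterated form recorded after the Fuchs--Hamkins theorem — while $\dot\forceQ_i$, being proper and Suslin-tree preserving, does no further damage. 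The genuine technical core, and the step I expect to be the main obstacle, is making this interleaving of the ccc but Suslin-destroying branch-forcings with the proper, Suslin-preserving $\dot\forceQ_j$'s cohere — so that inserting the $\dot\forceQ_j$'s in between never spoils the off-the-branch configuration that the next branch-forcing relies on — and this is precisely what the Miyamoto-style preservation apparatus assembled above is for (Lemma~\ref{preservation of Suslin trees} and the iteration theorem immediately following it, which apply since every iterand in sight is proper, being ccc or proper). Limit stages are handled by the countable support iteration theorem alone. Once the induction is complete, $\Asterisk_{i < \delta}(S_{\dot s_i} \ast \dot\forceQ_i)$, hence $\forceP$ below $\vec q$, is proper, and therefore $\forceP$ is proper.
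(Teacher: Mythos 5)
Your proposal is correct and is essentially the paper's argument: the paper proves the lemma by the same mechanism (it explicitly refers back to the local-properness argument for $\Asterisk_{i<\delta}\check T$, and in the written proof runs it stage-by-stage, showing $\forceP_i\Vdash\check S$ is \emph{locally ccc} because any node extends to one off the previously added generic branches, below which $S$ is still Suslin, and then invokes the countable support iteration theorem), while you package the same idea globally by fixing the nodes $\dot s_i$ up front and viewing $\forceP$ below $\vec q$ as $\Asterisk_i(S_{\dot s_i}\ast\dot\forceQ_i)$. One small caution: insisting that the $\dot s_i$ form an \emph{antichain} is slightly too strong and not always achievable (a later $S$-component of $\vec p$ may be forced to extend an earlier $\dot s_j$); what is both arrangeable by density and all that the ccc argument uses is that $\dot s_i$ is forced to lie off the earlier generic branches $b_j$, $j<i$ --- an imprecision the paper's own sketch shares.
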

\begin{proof}
The proof is almost the same to an argument we gave in the last section.
We use induction on the length $\delta$. If $\forceP= (\forceP(i) \mid i < \delta)$
is proper and we consider the next step $\check{S} \ast \dot{\forceQ}_i)$
of the iteration, then we first show that
$ \forceP \Vdash \check{S} \text{ is proper } .$
Indeed, if $G \subset \forceP$ is generic, and $s \in S$ is an arbitrary condition, then we can extend $s$ in $S$ to a $t$ such that $S_t$ is a Suslin tree in $V[G]$. So 
\[ \forceP \Vdash \check{S} \text{ is locally ccc} \]
where locally ccc just means that for any condition in the forcing there is an extension of such that forcing below that extension has the ccc. 

Any locally ccc forcing is proper, which can be seen easily if we consider the definition of proper using stationary set preservation (stationarity in $[\lambda]^{\omega}$).
So 
\[\forceP \Vdash \check{S} \text{ is proper }. \]

That $\dot{\forceQ}_i$ is proper follows from the assumption so the two step iteration is proper as claimed.

\end{proof}

We also note that proper forcings of a certain type will leave the sequence $\vec{S} = (S_{\alpha } \mid \alpha < \omega_1)$ independent.
\begin{lemma}\label{PullingOutATree}
Let $\vec{S} \ast \vec{b} \times G$ be an $L$-generic filter for $\forceQ^0 \ast \forceQ^1 \times \forceQ^2$.
Let $A \subset \omega_1$, $A \in L$, and let $\vec{b} \upharpoonright A = (b_{\alpha} \mid \alpha \in A)$.
Work in $W':=L[\vec{S} \ast (\vec{b} \upharpoonright A) \times G]$ and let $\forceP = (\forceP_{\alpha}, \forceP(\alpha) \mid \alpha < \delta < \omega_1) \in W' $ be 
a countable support iteration such that for every factor $\forceP(\alpha)$, either $\forceP(\alpha)$ is a $\forceP_{\alpha}$-name of $S_{-1}$, or $\forceP(\alpha)$ is a $\forceP_{\alpha}$-name of a proper, $\aleph_1$-sized forcing which preserves Suslin trees.

Let $\beta \notin A$. Then $S_{\beta} \in \vec{S}$ remains a Suslin tree in $W' [\forceP]$.
\end{lemma}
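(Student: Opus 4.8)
The plan is to combine the "Suslin off the generic branch" machinery with the Suslin-tree-preservation theorem for $R$-proper countable support iterations. The essential point is that the trees $S_\beta$ for $\beta \notin A$ were never touched by a branch-adding forcing in passing from $L[\vec S \ast (\vec b \upharpoonright A) \times G]$ back down, and that everything done to obtain $W'$ and then $\forceP$ falls into the class of forcings that preserve such trees.

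First I would reduce to showing preservation of $S_\beta$ over $W'$ by the single iteration $\forceP$, treating $W'$ itself as a fixed ground model; the point is that $S_\beta$ is still Suslin in $W'$. This last fact comes from the independence of $\vec S \cup \{S_{-1}\}$ in $L[\forceQ^0][\forceQ^2]$ (Theorem \ref{MainPropertiesOfW}), together with the observation that $W' = L[\forceQ^0][\forceQ^2][\vec b \upharpoonright A]$, and $\forceQ^1 \upharpoonright A = \prod_{\alpha \in A} S_\alpha$ is (densely) $\sigma$-closed, hence preserves the Suslin tree $S_\beta$ of $L[\forceQ^0][\forceQ^2]$, since $\beta \notin A$ means $S_\beta$ does not occur as a factor. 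So $W' \models S_\beta$ is Suslin.

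Next I would verify that each factor $\forceP(\alpha)$ of $\forceP$ preserves Suslin trees (over the relevant intermediate model): if $\forceP(\alpha)$ is a name for a proper, $\aleph_1$-sized Suslin-tree-preserving forcing this is by hypothesis; if $\forceP(\alpha)$ is a name for $S_{-1}$, then because $S_{-1}$ is $n$-fold Suslin off the generic branch for every $n$ and the trees $\vec S \cup \{(S_{-1})_{s_i}\}$ form an independent family in $L[\forceQ^0][\forceQ^2]$ (Theorem \ref{MainPropertiesOfW}(3)), forcing with $S_{-1}$ adds a branch only through $S_{-1}$ and leaves every $S_\beta$ Suslin — more precisely, below a suitable condition $S_{-1}$ factors so that the product $S_\beta \times (S_{-1})_t$ is Suslin, which gives "$S_\beta$ stays Suslin" after the branch is added, by the standard fact that if $S_\beta \times T'$ is Suslin then $T'$ forces $S_\beta$ Suslin. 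Having checked each factor, I would invoke the $R$-proper version of Miyamoto's iteration theorem (the Theorem stated just before Lemma \ref{preservation of Suslin trees}), or simply its proper version since all factors here are proper, to conclude that the countable support iteration $\forceP$ preserves every Suslin tree of $W'$, in particular $S_\beta$. Properness of $\forceP$ itself is exactly the content of the preceding lemma (the $\check S \ast \dot\forceQ_i$ lemma) once we note $S_{-1}$ plays the role of $S$ there.

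The main obstacle is the case $\forceP(\alpha) = S_{-1}$: one must be careful that adding a branch through $S_{-1}$ does not accidentally kill some $S_\beta$, and that this persists through the whole iteration rather than just one step. The clean way to handle it is to pass, below a condition, to the equivalent iteration $\Asterisk_{i<\delta}(S_{-1})_{\dot t_i}$ along an antichain $\{\dot t_i\}$ in $S_{-1}$ (as in the earlier discussion of $\Asterisk_{i<\delta}\check S$), apply Theorem \ref{MainPropertiesOfW}(3) to see that $\vec S$ together with these $(S_{-1})_{\dot t_i}$ remains independent, and then feed this into the preservation theorem; local properness plus the equivalence $S_\beta\times(\text{product of }(S_{-1})_{t_i}\text{'s})$ Suslin $\Rightarrow$ the product forces $S_\beta$ Suslin takes care of the rest. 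The remaining verifications — that the intermediate models are as claimed, that $\aleph_1$ is preserved, that the hypotheses of the cited theorems are met at each stage — are routine given the results already assembled in the excerpt.
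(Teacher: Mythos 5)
Your proposal is correct and takes essentially the same route as the paper: the paper argues by induction on the length of $\forceP$, treating the $S_{-1}$-factors exactly as you do, namely by choosing a node on the new generic branch that lies off all previously added branches, below which $S_{-1}$ is ccc (being $n$-fold Suslin off the generic branch) and preserves $S_{\beta}$ by the independence of $\vec{S}\cup\{(S_{-1})_{s_i}\}$. Two small cautions: the iteration-preservation theorem as stated in the paper assumes each factor preserves \emph{every} Suslin tree, which the $S_{-1}$-factors do not, so you need the single-tree version of Miyamoto's theorem (or, as the paper does, run the induction on length directly); and it is the two-step iteration $\forceQ^0\ast(\forceQ^1\upharpoonright A)$, not $\forceQ^1\upharpoonright A$ alone over $L[\forceQ^0]$, that has a dense $\sigma$-closed subset.
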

\begin{proof}
The forcing $\forceP$  preserves $\aleph_1$ by the last lemma.

We show the assertion via induction on the length of $\forceP$. If the length is 1, the lemma follows from the fact that $\vec{S} \cup S_{-1}$ is independent. 

Assume that the lemma is true for iterations of length $\delta$ and we want to show it for iterations $\forceP$ of lenght $\delta+1$. If $\forceP = \forceP_{\delta} \ast \forceP(\delta)$ and $\forceP(\delta)$ is the name of a Suslin tree preserving forcing then $S_{\beta}$ remains Suslin in $W' [\forceP_{\delta}]$ by the induction hypothesis and hence also in $W'[\forceP_{\delta} ] [\forceP(\delta)] = W' [\forceP]$, as desired.

So we assume that $ \forceP(\delta)^{G_{\delta}} =S_{-1}$. The generic branch $b$ through $S_{-1}$ will be different from all the generic branches we added with $\forceP_{\delta}$. Thus there is a node $s_{\delta+1}$ such that $s_{\delta+1} \in b$ and $s_{\delta+1}$ is not in any of the $\forceP_{\delta}$ added $S_{-1}$-branches. As $S_{-1}$ is n-fold Suslin off the generic branch for every $n$, forcing at stage $\delta+1$ with $S$ below the condition $s_{\delta+1}$ is a ccc forcing over $W[\forceP_{\delta}]$ and also ccc over $W' [\forceP_{\delta}]$ and, by theorem \ref{MainPropertiesOfW}, forcing with $S_{-1}$ below $s_{\delta+1}$  preserves ground model Suslin trees, so in particular it preserves $S_{\beta}$.

\end{proof}

We end with a straightforward lemma which is used later in coding arguments.

\begin{lemma}\label{a.d.coding preserves Suslin trees}
 Let $T$ be a Suslin tree and let $\mathbb{A}_D(X)$ be the almost disjoint coding which codes
 a subset $X$ of $\omega_1$ into a real with the help of an almost disjoint family
 of reals $D$ of size $\aleph_1$. Then $$\mathbb{A}_{D}(X) \Vdash_{} T \text{ is Suslin }$$
 holds.
\end{lemma}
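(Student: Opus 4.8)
# Proof proposal for Lemma (a.d. coding preserves Suslin trees)

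The plan is to argue exactly as in the standard proof that almost disjoint coding forcing preserves Suslin trees, using the characterization of Suslin-ness in terms of elementary submodels together with the countable chain condition of $\mathbb{A}_D(X)$. First I would recall that a tree $T$ of height $\omega_1$ is Suslin in a forcing extension $V[G]$ if and only if $T$ has no uncountable antichain there; equivalently, working with a sufficiently large $\theta$, for every countable $M \prec H(\theta)$ with $T, \mathbb{A}_D(X) \in M$, setting $\eta = M \cap \omega_1$, every condition has an extension $q$ such that $q$ forces ``$\dot A \cap M$ is predense below $q$'' for every $\mathbb{A}_D(X)$-name $\dot A \in M$ for a maximal antichain of $T$ — but since $\mathbb{A}_D(X)$ is ccc, one can instead use the cleaner route below.

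\textbf{Main line of argument.} Suppose toward a contradiction that $\mathbb{A}_D(X)$ does not preserve $T$ Suslin, so there is a condition $p$ and a name $\dot A$ with $p \Vdash ``\dot A$ is a maximal antichain in $T$ of size $\aleph_1$.'' Since $\mathbb{A}_D(X)$ has the ccc, for each $t \in T$ the statement ``$t \in \dot A$'' is decided by a predense set and, more importantly, the name $\dot A$ is (forced to be) contained in a ground-model set: by ccc, for each $t \in T$ there is a countable set $D_t$ of conditions deciding whether $t \in \dot A$, and the set $B := \{ t \in T : p \not\Vdash t \notin \dot A\}$ is a subset of $T$ lying in $V$. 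Now $p$ forces $\dot A \subseteq B$, so $p$ forces $B$ to contain an uncountable antichain of $T$; but an uncountable subset of a Suslin tree $T$ in $V$ cannot have all its elements pairwise incomparable unless... — here one must be careful, because $B$ itself need not be an antichain. The correct move is the usual one: build a continuous $\in$-chain of countable elementary submodels $\langle M_\alpha : \alpha < \omega_1\rangle$ of $H(\theta)$ with $T, \mathbb{A}_D(X), p, \dot A \in M_0$, and use that $T$ is Suslin in $V$ so the levels $T_{\delta}$ for $\delta = M_\delta \cap \omega_1$ in a club are exactly $T \cap M_\delta$ at limit levels. For such $\delta$, since $\mathbb{A}_D(X) \in M_\delta$ is ccc, any condition $q \le p$ forces that $\dot A \cap M_\delta$ is predense in $T$ below every node of $T_\delta$: indeed, given $t \in T_\delta$, the set $\{ s \in T : s \le t\}$ is countable and lies in $M_\delta$, and genericity with maximality of $\dot A$ together with ccc (so that the relevant antichain of conditions deciding the extension lies in $M_\delta$) yields an element of $\dot A$ comparable with $t$ that is below level $\delta$, hence in $M_\delta$. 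Thus $p$ forces $\dot A$ to have all its elements at levels $< \delta$ for club-many $\delta$, forcing $\dot A$ to be countable — contradiction.

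\textbf{Where the ccc of $\mathbb{A}_D(X)$ enters and the main obstacle.} The one genuinely nontrivial point — and the reason the lemma is stated separately rather than folded into ``ccc forcings preserve Suslin trees'' (which is false in general) — is establishing that for a \emph{club} of $\delta < \omega_1$ and every $t$ at level $\delta$, any extension of $p$ still forces some element of $\dot A$ below level $\delta$ to be comparable with $t$. This is where one uses that $\mathbb{A}_D(X)$ is ccc \emph{and} that the name $\dot A$, being a name for an antichain in the \emph{ground-model} tree $T$, is ``thin'': for each $t$ the antichain of conditions that force some specific $s \le_T t$ into $\dot A$ is countable and, by elementarity, belongs to $M_\delta$ when $t \in M_\delta$. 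I expect the bookkeeping around ``the decision about $t$'s $\dot A$-predecessor is made inside $M_\delta$'' to be the only delicate step; everything else is the routine elementary-submodel characterization of Suslin-ness. Since the paper explicitly defers the analogous club-shooting lemma to \cite{Ho1}, \cite{Ho2}, \cite{Ho3}, I would likewise present this as a short proof citing the same sources for the detailed submodel argument, and simply record that ccc-ness of $\mathbb{A}_D(X)$ plus the ground-model-tree structure of $\dot A$ makes the standard argument go through verbatim.
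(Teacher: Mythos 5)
Your argument has a genuine gap, and it is located exactly at the step you flag as ``the only delicate step'' but then defer. In the elementary-submodel part you need that for club-many $\delta$ (equivalently, for the relevant $M_\delta$), the condition $p$ forces $\dot A\cap M_\delta$ to be predense below every $t\in T_\delta$, i.e.\ that the antichain is already ``sealed'' below level $\delta$. The justification you offer --- that for each $s\le_T t$ the set of conditions deciding $s\in\dot A$ is countable and lies in $M_\delta$ by ccc and elementarity --- only tells you that the \emph{decisions} about nodes of $M_\delta$ are made inside $M_\delta$; it does not rule out that the unique element of $\dot A^G$ comparable with $t$ sits \emph{above} $t$, at a level $\ge\delta$. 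Nothing in your sketch excludes this, and indeed nothing in your sketch uses any property of $\mathbb{A}_D(X)$ beyond the ccc. That is fatal: the identical argument, applied to the ccc forcing $T$ itself with $\dot A$ the name for a maximal antichain meeting the generic branch, would ``prove'' that forcing with a Suslin tree preserves its own Suslin-ness, which is false. So the argument proves too much and cannot be completed as stated.

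The missing ingredient is that $\mathbb{A}_D(X)$ is not merely ccc but $\sigma$-centered (conditions are pairs $(r,R)$ with $r$ finite, and any two conditions with the same first coordinate are compatible), hence has the Knaster property, hence is \emph{productively} ccc. This is the paper's entire proof: $\mathbb{A}_D(X)\times T$ is ccc, and an uncountable antichain of $T$ in the extension would yield (by choosing, for each of uncountably many distinct nodes forced into $\dot A$, a condition deciding it) an uncountable antichain in the product, a contradiction. If you want to salvage your submodel approach, the clean fix is the same fact in disguise: once $\mathbb{A}_D(X)\times T$ is ccc, every condition is $(M,\mathbb{A}_D(X)\times T)$-generic, which is exactly the criterion of Lemma~\ref{preservation of Suslin trees}. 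Either way, you must isolate and use the Knaster/productive-ccc property; citing the club-shooting references does not help, since those lemmas concern $R$-proper, non-ccc forcings and use a genuinely different (fusion-type) argument.
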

\begin{proof}
 This is clear as $\mathbb{A}_{D}(X)$ has the Knaster property, thus the product $\mathbb{A}_{D}(X) \times T$ is ccc and $T$ must be Suslin in $V[{\mathbb{A}_{D}(X)}]$. 
\end{proof}

\section{Coding machinery}
We continue with the construction of the appropriate notions of forcing which we want to use in our proof. The goal is to first define a coding forcings $\operatorname{Code} (x)$ for reals $x$, which will force for $x$ that a certain $\Sigma^1_3$-formula $\Phi(x)$ becomes true in the resulting generic extension. The coding method is slightly different than the one in  \cite{Ho2}, \cite{Ho3} or \cite{Ho4} leading to a slicker presentation. Recall that when we talk about the $\vec{S}$-sequence we mean all the members with index $\ge 0$, and leave $S_{-1}$ out.

We will work over the model $W$. We shall define the coding forcing $\operatorname{Code} (x,y,m)$ now, which codes the triple $(x,y,m)$ into $\vec{S}$ and is defined as a two step iteration where the first factor is forcing with $S_{-1}$ and the second factor is the almost disjoint coding forcing of a specific set $Y \subset \omega_1$. Let $(x,y)$ be two reals in $W$ and let $m$ be a natural number. We simply write $(x,y,m)$ for a real $w$ which codes the triple $(x,y,m)$ in a recursive way.

In the first step we add an $\omega_1$-branch $b$ through $S_{-1}$ and work in $W[b]$. In a second step we will form a specific set $Y \subset \omega_1$ and let the second forcing be $\dot{\mathbb{A}} \dot{(Y)}.$
We will define the crucial set $Y \subset \omega_1$ now. We consider the branch $b$ through $S_{-1}$ as  $b \subset \omega_1$. 
 
  Let 
\begin{align*}
A:= &\{ \omega \gamma +2n \mid \gamma \in b, n \notin (x,y,m) \} \cup \\& \{\omega \gamma + 2n+1 \mid \gamma \in b, n \in (x,y,m) \}.
\end{align*}

Let $X \subset \omega_1$ be chosen such that it codes the following objects:
\begin{itemize}
\item The generically added branch $b \subset S_{-1}$
\item The set $A \subset \omega_1$.
\item For every ordinal $\omega \gamma +2n \in A$ or $\omega \gamma +2n +1 \in A$ we pick a generic branch $b_{\omega \gamma +2n}$ or $b_{\omega \gamma +2n+1}$ through the tree $S_{\omega \gamma +2n}$ or $S_{\omega \gamma +2n+1}$.
\item We also collect all $\aleph_1$-many, generically added clubs through $\vec{R}$ which are necessary to define the trees from $\vec{S} \cup S_{-1}$ using our $\Sigma_1 ( \{ \omega_1 \} )$-formula from the last section.
\end{itemize}

Note that, when working in $L[X]$ and if $\gamma \in b$ then
 we can read off $(x,y,m)$, and thus  we say that $(x,y,m)$ is coded into $\vec{S}$ at the $\omega$-block starting at $\gamma$,  via looking at the $\omega$-block of $\vec{S}$-trees starting at $\gamma$ and determine which tree has an $\omega_1$-branch in $L[X]$.
\begin{itemize}
 \item[$(\ast)_{\gamma}$]  $n \in (x,y,m)$ if and only if $S_{\omega \cdot \gamma +2n+1}$ has an $\omega_1$-branch, and $n \notin (x,y,m)$ if and only if $S_{\omega \cdot \gamma +2n}$ has an $\omega_1$-branch.
\end{itemize}
Indeed if $n \notin (x,y,m)$ then we added a cofinal branch through $S_{\omega \cdot \gamma+ 2n}$. If on the other hand $S_{\omega \cdot\gamma +2n}$ does not have an $\omega_1$-branch in $L[X]$ then we must have added an $\omega_1$-branch through $S_{\omega \cdot \gamma +2n+1}$ as we always add an $\omega_1$-branch through either $S_{\omega \cdot \gamma +2n+1}$ or $S_{\omega \cdot \gamma +2n}$ and adding branches through some $S_{\alpha}$'s  will not affect that some $S_{\beta}$ remain Suslin in $L[X]$, as $\vec{S}$ is independent.

We note that we can apply an argument resembling David's in this situation. We rewrite the information of $X \subset \omega_1$ as a subset $Y \subset \omega_1$ using the following line of reasoning.
It is clear that any transitive, $\aleph_1$-sized model $M$ of $\ZFP$ which contains $X$ will be able to first define $\vec{S}$ correctly  and also correctly decode out of $X$ all the information regarding $(x,y,m)$ being coded at each $\omega$-block of $\vec{S}$ starting at every $\gamma \in b$. 
Consequently, if we code the model $(M,\in)$ which contains $X$ as a set $X_M \subset \omega_1$, then for any uncountable $\beta$ such that $L_{\beta}[X_M] \models \ZFP$:
\[L_{\beta}[X_M] \models \text{\ldq The model decoded out of }X_M \text{ satisfies $(\ast)_{\gamma}$ for every $\gamma \in b$\rdq.} \]
In particular there will be an $\aleph_1$-sized ordinal $\beta$ as above and we can fix a club $C \subset \omega_1$ and a sequence $(M_{\alpha} \, : \, \alpha \in C)$ of countable elementary submodels  of $L_{\beta} [X_M]$ such that
\[\forall \alpha \in C (M_{\alpha} \prec L_{\beta}[X_M] \land M_{\alpha} \cap \omega_1 = \alpha)\]
Now let the set $Y\subset \omega_1$ code the pair $(C, X_M)$ such that the odd entries of $Y$ should code $X_M$ and if $E(Y)$ denotes  the set of even entries of $Y$ and $\{c_{\alpha} \, : \, \alpha < \omega_1\}$ is the enumeration of $C$ then
\begin{enumerate}
\item $E(Y) \cap \omega$ codes a well-ordering of type $c_0$.
\item $E(Y) \cap [\omega, c_0) = \emptyset$.
\item For all $\beta$, $E(Y) \cap [c_{\beta}, c_{\beta} + \omega)$ codes a well-ordering of type $c_{\beta+1}$.
\item For all $\beta$, $E(Y) \cap [c_{\beta}+\omega, c_{\beta+1})= \emptyset$.
\end{enumerate}
We obtain
\begin{itemize}
\item[$({\ast}{\ast})$] For any countable transitive model $M$ of  ``$\ZFP$ and $\aleph_1$ exists$"$ such that $\omega_1^M=(\omega_1^L)^M$ and $ Y \cap \omega_1^M \in M$, $M$ can construct its version of the universe $L[Y \cap \omega_1^N]$, and the latter will see that there is an $\aleph_1^M$-sized transitive model $N \in L[Y \cap \omega_1^N]$ which models $(\ast)$ for $(x,m,k)$ and every $\gamma \in b \cap M$.
\end{itemize}
Thus we have a local version of the property $(\ast)$.

We have finally defined the desired set $Y$ and now we use 
\(\mathbb{A}_D (Y) \)
 relative to our previously defined, almost disjoint family of reals $D \in  L $ (see the paragraph after Definition 2.5)  to code the set $Y$ into one real $r$. This forcing only depends on the subset of $\omega_1$ we code, thus $\mathbb{A}_D(Y)$ will be independent of the surrounding universe in which we define it, as long as it has the right $\omega_1$ and contains the set $Y$.

The effect of the coding forcing $\operatorname{Code} (x,y,m)$ is that it generically adds a real $r$ such that
\begin{itemize}
\item[$({\ast}{\ast}{\ast})$] For any countable, transitive model $M$ of ``$\ZFP$ and $\aleph_1$ exists$"$, such that $\omega_1^M=(\omega_1^L)^M$ and $ r  \in M$, $M$ can construct its version of $L[r]$, denoted by $L[r]^M$, which in turn thinks that there is a transitive $\ZFP$-model $N$ of size $\aleph_1^M$  such that $N$ believes $(\ast)$ for $(x,y,m)$ and every $\gamma \in b \cap M$.
\end{itemize}

Indeed, if $r$ and $M$ are as above, then $M$ and $L[r]^M$ will compute the almost disjoint family $D$ up to the real indexed with $\omega_1 \cap M$ correctly, as discussed below the definition 2.3. As a consequence, $L[r]^M$ will contain the set $Y \cap \omega_1^M$, where $Y \subset \omega_1$ is as in $(\ast\ast)$. So in $L[Y \cap \omega_1^M]$, there is an $\aleph_1^M$-sized, transitive $N$ which models $(\ast)_{\gamma}$ for every $\gamma \in h \cap M$, as claimed.

Note that $({\ast} {\ast} {\ast})$ is a $\Pi^1_2$-formula in the parameters $r$ and $(x,y,m)$, as the set $h \cap M \subset \omega_1^M$ is coded into $r$. We say in the above situation that the real $(x,y,m)$ \emph{ is written into $\vec{S}$}, or that $(x,m,k)$ \emph{is coded into} $\vec{S}$. To summarize our discussion, given an arbitrary real of the form $(x,y,m)$, then our forcing $\operatorname{Code} (x,y,m)$, when applied over $W$, will add a real $r$ which will turn the $\Pi^1_2$-formula $({\ast} {\ast}{\ast})$ for $r,(x,y,m)$ into a true statement in $W[\operatorname{Code} (x,y,m))]$.

 The projective and local statement $({\ast} { \ast} {\ast} )$, if true,  will determine how certain inner models of the surrounding universe will look like with respect to branches through $\vec{S}$.
That is to say, if we assume that $({\ast} { \ast} {\ast} )$ holds for a real $(x,y,m)$ and is the truth of it is witnessed by a real $r$. Then $r$ also witnesses the truth of $({\ast} { \ast} {\ast} )$ for any transitive  model $M$ of  the theory ``$\ZFP+$ $\aleph_1$ exists and $\aleph_1= \aleph_1^L"$,  which contains $r$ (i.e. we can drop the assumption on the countability of $M$).
Indeed if we assume 
that there would be an uncountable, transitive $M$, $r \in M$, which witnesses that $({\ast} { \ast} {\ast} )$ is false. Then by L\"owenheim-Skolem, there would be a countable $N\prec M$, $r\in N$ which we can transitively collapse to obtain the transitive $\bar{N}$. But $\bar{N}$ would witness that $({\ast} { \ast} {\ast} )$ is not true for every countable, transitive model, which is a contradiction.

Consequently, the real $r$ carries enough information that
the universe $L[r]$ will see that certain trees from $\vec{S}$ have branches in that
\begin{align*}
n \in (x,m,k) \Rightarrow L[r] \models  ``S_{\omega \gamma + 2n+1} \text{ has an $\omega_1$-branch}".
\end{align*}
and
\begin{align*}
n \notin (x,m,k) \Rightarrow L[r] \models ``S_{\omega \gamma + 2n} \text{ has an $\omega_1$-branch}".
\end{align*}
Indeed, the universe $L[r]$ will see that there is a transitive model $N$ of ``$\ZFP+$ $\aleph_1$ exists and $\aleph_1=\aleph_1^L"$ which believes $(\ast)$ for every $\gamma \in b\subset \omega_1$, the latter being coded into $r$. But by upwards $\Sigma_1$-absoluteness, and the fact that $N$ can compute $\vec{S}$ correctly, if $N$ thinks that some tree in $\vec{S}$ has a branch, then $L[r]$ must think so as well.

\section{Allowable forcings}

Next we define the set of forcings which we will use in our proof.
We aim to iterate the coding forcings we defined in the last section. The coding forcings should take care of two tasks simultaneously: on the one hand it should work towards $\Pi^1_3$-uniformization, on the other hand it should produce a $\Delta^1_3$-definable well-order of the reals. These two tasks are reflected in the way we define allowable, which uses two cases. 

 The second case should take care of producing a $\Delta^1_3$-definable well-order of the reals in the following way. If $\forceP$ is some iteration relative to some bookkeeping $F$, and if $F(\beta)$ is $(\dot{z}_0,\dot{z}_1)$ and 
$\dot{z}_0^{G_{\beta}}=z_0$ and $\dot{z}_1^{G_{\beta}}=z_1$, then we look at all the forcing names (not necessarily $\forceP_{\beta}$-names) $\sigma_0,\sigma_1$ such that 
\[z_0 =\sigma_0^{G_{\beta}} \land z_1 = \sigma_1^{G_{\beta}}.\]
We pick the $<$-least such names $\sigma_0$ and $\sigma_1$ in the canonical, global well-order of $L$.

Then we let $z_0 < z_1$ (we use $<$ here again and hope no confusion arises) if and only if the $<$-least such $\sigma_0$ is less than the $<$-least such $\sigma_1$.
Then allowable is just pre-allowable which codes up pairs of reals $(z_0,z_1)$ according to the just defined well-order $<$.

That is 

\begin{definition}
Let $W$ be our ground model. Let $\alpha < \omega_1$ and let $F\in W$, $F: \alpha \rightarrow W$ be a bookkeeping function.
A countable support iteration $\forceP=(\forceP_{\beta}\,:\, {\beta< \alpha})$ is called allowable (relative to the bookkeeping function $F$)  if the function $F: \alpha \rightarrow W$  determines $\forceP$ inductively as follows:
 \begin{itemize}
 \item We assume that $\beta \ge 0$ and $\forceP_{\beta}$ is defined.
 We let $G_{\beta}$ be a $\forceP_{\beta}$-generic filter over $W$ and assume that $F(\beta)=(\dot{x},\dot{y},\dot{m})$, for a tuple of $\forceP_{\beta}$-names. We assume that $\dot{x}^{G_{\beta}}=:x$ and $\dot{y}^{G_{\beta}}=:y $ are reals and $\dot{m}^{G_{\beta}}=m$ is a natural number.
 
 \begin{itemize}
 \item[]Then we let $\forceP(\beta):= \operatorname{Code} (x,y,m)= S_{-1} \ast \dot{\mathbb{A}} (\dot{Y} )$.
 \end{itemize}
 \item If on the other hand $F(\beta)= (\dot{z_0},\dot{z_1})$ for two $\forceP_{\beta}$-names of reals $\dot{z_0}$ and $\dot{z_1}$, and if we let $z_i = \dot{z}_i^{G_{\beta}}$ then we either force with 
 \begin{itemize}
 \item[] $\forceP(\beta):= \operatorname{Code} (z_0,z_1)= S_{-1} \ast \dot{\mathbb{A}} (\dot{Y} )$, or with 
 \item[] $\forceP(\beta):= \operatorname{Code} (z_1,z_0)= S_{-1} \ast \dot{\mathbb{A}} (\dot{Y} )$.
 \end{itemize}
 according to whether $\sigma_0 < \sigma_1$ or $\sigma_1 < \sigma_0$, where $\sigma_0$ and $\sigma_1$ are as in the discussion above.

 \end{itemize}
 
\end{definition}

As allowable forcings form the base set of an inductively defined shrinking process, they are also denoted by 0-allowable with respect to $F$ to emphasize this fact.
Informally speaking, the bookkeeping $F$ hands us at every step a real of the form $(x,y,m)$ or $(z_0,z_1)$, and uses a cofinal branch through $S_{-1}$ which gives rise to a subset of $b \subset \omega_1$ (these sets we will call \emph{coding areas}) which corresponds to the places where we code up the relevant branches through $\vec{S}$ to compute $(x,y,m)$ or $(z_0,z_1)$ or $(z_1,z_0)$ using the coding mechanism described in the previous section. The definition ensures that the coding areas are an almost disjoint family of $\omega_1$, i.e. each two distinct coding areas have countable intersection. The definition of allowable ensures that each such coding area is used at most once in an allowable forcing, which will imply that we will not accidentally code an unwanted reals into $\vec{S}$. 

Every allowable forcing $\forceP$ can be written over $W$ as $ (\Asterisk_{\beta<\delta} S_{-1} \ast \mathbb{A} (\dot{Y}_{\beta}))$.

As a remark we add that for any allowable forcing $\forceP \in W$ and any $G \subset \forceP$ generic, the forcing $\forceP^G$ can correctly  be defined already over a proper inner model of  $W[G]$. Or put differently, for any allowable $\forceP$ and any condition $p \in \forceP$, forcing with $\forceP$ below $p$ can be defined already over an inner model of $W$.  Indeed, as $\forceP$'s definition depends only on the names of reals listed by $F$ and on the branches we add through $S_{-1}$, which form an almost disjoint family in $\omega_1$, we see that $\forceP^G$ can be defined in$W[G]$ using a countable list of names of reals and additionally the $\aleph_1$-many branches through $L$-Suslin from $\vec{S}$ which are used to define the sets $Y \subset \omega_1$ which then get coded using $\mathbb{A}_D (Y)$.
So  there are always $\aleph_1$-many trees from $\vec{S}$ and $\aleph_1$-many coding areas which are not used when defining $\forceP^G$ over $W[G]$. In particular, trees from $\vec{S}$ which are not used in $\forceP^G$'s definition over $W[G]$ can be forced with after we forced with $\forceP^G$, by the fact that we can re-arrange factors of a product forcing. This will become useful when showing that we do not accidentally code up information when iterating allowable forcings.

The next notion will become useful later.

\begin{definition}
Let $\forceP$ be an allowable forcing over $W$ relative to $F: \delta_1 \rightarrow H(\omega_2)$, and let $\forceQ$ be an allowable forcing over $W$ relative to the bookkeeping $H: \delta_2 \rightarrow H(\omega_2)$. Then we say $\forceQ$ is an allowable extension of $\forceP$, denoted by $\forceQ \triangleright \forceP$, if $\delta_1 < \delta_2$ and $H \upharpoonright \delta_1 = F$.
\end{definition}

If $\forceP \in W$ is a forcing such that there is an $\alpha < \omega_1$ and an $F \in W$, $F: \alpha \rightarrow W$ such that $\forceP$ is allowable with respect to $F$, then we often just drop the $F$ and simply say that $\forceP \in W$ is allowable.

We prove next some properties of allowable forcings.
 \begin{lemma}

\begin{enumerate}
\item If $\forceP=(\forceP(\beta) \, : \, \beta < \delta) \in W$ is allowable then for every $\beta < \delta$, $\forceP_{\beta} \Vdash| \forceP(\beta)|= \aleph_1$.
\item Every allowable forcing over $W$ is an $R$-proper forcing over $L$. Thus it preserves $\aleph_1$ and every countable set in an allowable extension can be covered by a countable set from $L$.
\item Every allowable forcing over $W$ preserves $\CH$. Furthermore, if $\forceP= (\forceP(\alpha) \, : \, \alpha < \omega_1) \in W$ is an $\omega_1$-length iteration such that each initial segment of the iteration is allowable over $W$, then $W[\forceP] \models \CH$.
\item Assume that $\forceP=(\forceP_{\beta} \mid \beta <\delta_0)$ is allowable relative to $F^0$ and $\forceQ=(\forceQ_{\beta} \mid \beta < \delta_1)$ is allowable relative to $F^1$. Then the product of $\forceP$ and $\forceQ$ densely embeds into $\forceP \ast \check{\forceQ}$ and this densely embeds into an allowable forcing relative to a bookkeeping $F$.
\end{enumerate}
\end{lemma}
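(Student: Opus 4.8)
The plan is to prove the four items by a single induction on the length of the allowable iteration, since item (1) needs the $\aleph_1$-preservation supplied by (2) and item (3) needs the cardinality bound of (1), while (4) is essentially independent but reuses the structural description of allowable forcings. Item (1) is immediate: $S_{-1}$ is a tree on $\omega_1$, hence of size $\aleph_1$, and $\dot{\mathbb{A}}_D(\dot Y)$ has its conditions in $\omega^{<\omega}\times D^{<\omega}$, hence is of size $|D|=\aleph_1$, so $\forceP(\beta)=S_{-1}\ast\dot{\mathbb{A}}_D(\dot Y)$ has size $\aleph_1$ in $W[G_\beta]$; together with the $\aleph_1$-preservation from (2) this gives $\forceP_\beta\Vdash|\forceP(\beta)|=\aleph_1$. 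For item (2) the point is that, after the $S_{-1}$-step, each factor is an almost disjoint coding, which has the Knaster property — hence is proper — and preserves Suslin trees by Lemma~\ref{a.d.coding preserves Suslin trees}; since moreover $S_{-1}$ is $n$-fold Suslin off the generic branch for every $n$ by Theorem~\ref{MainPropertiesOfW}, the Section~2 lemma on countable support iterations of the form $\Asterisk_i(\check S\ast\dot\forceQ_i)$ applies and shows every allowable forcing over $W$ is proper over $W$. A proper forcing is $R$-proper for every stationary $R$, so allowable forcings are $R$-proper over $W$; and because $W=L[\forceQ^0\ast\forceQ^1\ast\forceQ^2]$ where $\forceQ^0\ast\forceQ^1\ast\forceQ^2$ is an $R$-proper, $\omega$-distributive extension of $L$, and a two-step iteration of $R$-proper forcings is $R$-proper, the iteration $\forceQ^0\ast\forceQ^1\ast\forceQ^2\ast\dot\forceP$ is $R$-proper over $L$ — which is what ``$\forceP$ is $R$-proper over $L$'' means. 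The remaining assertions then follow in the usual way from $R$-properness: $\aleph_1$ is preserved, and given a name for a countable set of ordinals and a condition, one picks a countable $M\prec H(\theta)^L$ with $M\cap\omega_1\in R$ (possible since $R$ is stationary) containing the data and an $(M,\cdot)$-generic condition below the given one, which forces the set into $M\cap\mathrm{Ord}$, a countable set lying in $L$.

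For item (3), $W\models\CH$ since no reals are added over $L$; an allowable $\forceP$ has size $\le\aleph_1$ by (1) and $\CH$, and is proper over $W$ by (2), and a proper forcing of size $\le\aleph_1$ preserves $\CH$ over a model of $\CH$ (concretely: each factor $S_{-1}\ast\dot{\mathbb{A}}_D(\dot Y)$ is ccc of size $\aleph_1$, hence preserves $\CH$ by counting nice names, and this is propagated along the iteration as in \cite{Ho1},\cite{Ho2},\cite{Ho3}), so $W[\forceP]\models\CH$. For an $\omega_1$-length iteration all of whose initial segments are allowable, the countable support makes $\forceP_{\omega_1}$ the direct limit of its initial segments $\forceP_\gamma$ ($\gamma<\omega_1$), so every real of $W[\forceP]$ already appears in some $W[\forceP_\gamma]$; each such model satisfies $\CH$ by the first part, and $\aleph_1$ is preserved throughout, whence $W[\forceP]\models\CH$.

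For item (4), the first embedding is the standard fact that $\forceP\times\forceQ$ densely embeds into $\forceP\ast\check\forceQ$ via $(p,q)\mapsto(p,\check q)$, the image being the dense set of conditions whose second coordinate is a check-name. For the second, recall that $\forceP$ (relative to $F^0$) and $\forceQ$ (relative to $F^1$) are countable support iterations of coding forcings $\operatorname{Code}(\cdot)=S_{-1}\ast\dot{\mathbb{A}}_D(\dot Y)$, each factor of which is determined by the real it codes and the cofinal $S_{-1}$-branch produced at that step, which fixes a ``coding area'' in $\omega_1$; by the remark following the definition of allowable, these coding areas are pairwise almost disjoint along a single allowable forcing, and $\aleph_1$-many trees of $\vec{S}$ and $\aleph_1$-many coding areas remain unused. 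Because $S_{-1}$ is $n$-fold Suslin off the generic branch, in $W[\forceP]$ the $S_{-1}$-branches produced along $\check\forceQ$ are distinct from those produced along $\forceP$, so the coding areas of $\forceP$ and of $\check\forceQ$ jointly still form an almost disjoint family, and $\forceP\ast\check\forceQ$ is again of the shape $\Asterisk_{\beta<\delta_0+\delta_1}(S_{-1}\ast\dot{\mathbb{A}}_D(\dot Y_\beta))$ with pairwise almost disjoint coding areas. Defining $F\colon\delta_0+\delta_1\to H(\omega_2)$ by $F\upharpoonright\delta_0=F^0$ and letting $F$ on $[\delta_0,\delta_0+\delta_1)$ re-list, through the first case of the definition of allowable, the reals coded by $\forceQ$ (read as names in $\forceP\ast\check\forceQ$), we exhibit an allowable forcing with respect to $F$ containing $\forceP\ast\check\forceQ$ as a dense suborder; using the first case avoids the dependence of the well-order case on which names are $<$-least, and the passage to a dense suborder rather than equality merely reflects that allowable forcings are assembled from the names listed by the bookkeeping.

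The step I expect to be the main obstacle is item (4): one must verify that amalgamating the two bookkeepings never reuses a coding area — equivalently, that no real is accidentally written into $\vec{S}$ — which is exactly where the independence of $\vec{S}$ and the ``$n$-fold Suslin off the generic branch'' property of $S_{-1}$ are indispensable. A secondary point requiring care is the cardinality bookkeeping in (3), namely checking that $\CH$ genuinely survives a countable support iteration of length $\le\omega_1$ with iterands of size $\aleph_1$.
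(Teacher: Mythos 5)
Your items (1)--(3) are fine and follow essentially the route the paper itself takes (the paper disposes of them in one line, reducing (1) and (2) to the definition of $\operatorname{Code}(\cdot)$ plus the properness lemma for countable support iterations of the form $\Asterisk_i(\check S\ast\dot{\forceQ}_i)$, and citing Abraham, Theorems 2.10 and 2.12, for the $\CH$-preservation in (3)); your expansion of these reductions is correct. The first half of (4) (the dense embedding of the product into the two-step iteration, and the rewriting of $\forceP\ast\check{\forceQ}$ as a single countable support iteration over $W$, which uses the covering property from (2)) is also as in the paper.

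The genuine gap is in the second half of item (4), in your treatment of the stages of $\forceQ$ that arose from the well-order case, i.e.\ where $F^1(\beta)=(\dot z_0,\dot z_1)$ and $\forceQ(\beta)=\operatorname{Code}(z_0,z_1)$ or $\operatorname{Code}(z_1,z_0)$. You propose to re-list the reals coded at those stages ``through the first case'' of the definition of allowable, precisely in order to avoid the dependence of the well-order case on which names are $<$-least. But the first case only ever produces factors $\operatorname{Code}(x,y,m)$ for a \emph{triple} consisting of two reals and a natural number, i.e.\ it writes into $\vec S$ a real recursively coding a triple, whereas the well-order case writes a real coding a \emph{pair}; these codes must be distinguishable (the final $\Delta^1_3$ well-order is defined exactly by ``the pair $(z_0,z_1)$ is coded into $\vec S$'', while the uniformization reads off the triple-codes), so no choice of $F(\beta)_0=(\dot x,\dot y,\dot m)$ reproduces the factor $\operatorname{Code}(z_0,z_1)$. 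Consequently the forcing generated by your $F$ codes different reals at those stages and does not contain $\forceP\ast\check{\forceQ}$ as a dense suborder. The worry you were trying to sidestep is legitimate but is resolved differently in the paper: in the well-order case the comparison $\sigma_0<\sigma_1$ ranges over \emph{arbitrary} forcing names (not merely $\forceP_\beta$-names for the iteration at hand) and uses the global well-order of $L$, so it is absolute between the construction of $\forceQ$ over $W$ and the construction of the tail of $\forceP\ast\check{\forceQ}$ over $W[\forceP]$. One therefore simply takes $F$ to be the concatenation of $F^0$ and $F^1$ (reading $\forceQ_\beta$-names as check-names for $\forceP\ast\check{\forceQ}_\beta$); both cases then produce the identical factor at every stage.
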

\begin{proof}
The first, the second and the third assertion follow immediately from the definition modulo some well-known results, for the third item see Theorem 2.10 and theorem 2.12 in \cite{Abraham}.

The first assertion of the fourth item is well-known, and the second assertion follows from the fact that  every countable set of ordinals added by an $R$-proper forcing can be covered by a countable set of ordinals from the ground model.
 So in particular the two step iteration $\forceP \ast \check{\forceQ}$, can be densely embedded into a countable support iteration from the ground model.
 What is left is to find a bookkeeping $F$ which witnesses allowability.
 The bookkeeping is just the concatenation of $F^0$ and $F^1$, that is $F(\beta) = F^0 (\beta)$ for $\beta < \delta_0$ and $F(\beta+\delta_0)= F^1(\beta)$, and we identify  the check-$\forceP$-names for conditions in $\forceQ$ with conditions of $\forceQ$ in the usual way. The way we defined the well-order of the reals $z_0 < z_1$ or $z_1 <z_0$ using arbitrary names $\sigma_0$, $\sigma_1$ and the global well-order of $L$, ensures that if  we consider $z_0, z_1$ in $W[\forceP]$ and on the other hand in $W[\forceP \times \forceQ]$, both universes will agree on whether $z_0 < z_1$ or $z_1 < z_0$, thus $\forceP \times \forceQ$ is indeed allowable with respect to $F$.

\end{proof}

For the rest of this work, we keep to identify $\forceP \ast \check{\forceQ}$  with
$\forceP \times \forceQ$ and  $\forceP \ast \check{\forceQ}$-names with $\forceP \times \forceQ$ names without further mentioning it. 

Let $\forceP= (\forceP(\beta) \, : \, \beta < \delta)$ be an allowable forcing with respect to some $F \in W$.
The set of  (names of) reals which are enumerated by $F$ is dubbed the set of reals which are coded by $\forceP$. That is, for every $\beta$, if we let $\dot{x}_{\beta}$ be the (name) of a real  listed by $F(\beta)$ and if we let $G \subset \forceP$ be a generic filter over $W$ and finally if we let
$ \dot{x}_{\beta}^G =:x_{\beta}$,  then we say that
$\{ x_{\beta} \, : \, \beta < \alpha \}$ is the set of reals coded by $\forceP$ and $G$ (though we will suppress the $G$).

Next we show, that iterations of 0-allowable forcings will not add accidentally new elements to the set of reals defined by the $\Sigma^1_3$-formula $\Phi(x,y,m)$ which claims the existence of a real $r$ for which $({\ast} {\ast} {\ast})$ with parameters $r$ and the real $(x,y,m)$ is true:
\begin{align*}
 \Phi(x,y,m) \equiv \exists r  \forall M (&M \text{ is countable and transitive and } M \models \ZFP \\&\text{ and } \omega_1^M=(\omega_1^L)^M \text{ and }  r, (x,y,m) \in M  \rightarrow M \models \varphi (r,(x,y,m)) )
\end{align*}
where $\varphi(r,(x,y,m))$ asserts that in $M$'s version of $L[r]$, there is a transitive, $\aleph_1^M$-sized $\ZFP$-model which witnesses that $(x,y,m)$ is coded into $\vec{S}$.

\begin{lemma}\label{nounwantedcodes}
If $\forceP \in W$ is allowable, $\forceP=(\forceP_{\beta} \, : \, \beta < \delta)$, $G \subset \forceP$ is generic over $W$ and $\{ x_{\beta} \, : \, \beta < \delta\}$ is the set of reals which are coded by $\forceP$. Let $\Phi(v_0)$ be the distinguished formula from above. Then
in $W[G]$, the set of reals which satisfy $\Phi(v_0)$ is exactly 
$\{ x_{\beta} \, : \, \beta < \delta\}$.
\end{lemma}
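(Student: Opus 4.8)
The plan is to prove both inclusions. One direction is essentially by construction: if $x_\beta$ is coded by $\forceP$ at stage $\beta$, then at that stage we force with $\operatorname{Code}(x_\beta, \ldots)$, which by the analysis of the coding machinery in Section 3 adds a real $r$ turning $(\ast\ast\ast)$ into a true statement. Since $(\ast\ast\ast)$ is $\Pi^1_2$ and, as noted, upward absolute to the final model (a $\Sigma^1_3$ witness $r$ keeps witnessing $\Phi$), we get $W[G] \models \Phi(x_\beta)$. So $\{x_\beta : \beta < \delta\} \subseteq \{x : W[G] \models \Phi(x)\}$.

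For the reverse inclusion, suppose $W[G] \models \Phi(z)$ for some real $z$, witnessed by a real $r$; I want to show $z = x_\beta$ for some $\beta$. The witness $r$ says: in $L[r]$ there is a transitive $\aleph_1$-sized $\ZFP$-model $N$ such that $N$ believes $(\ast)_\gamma$ for $z$ for every $\gamma$ in some branch $b \subset \omega_1$ coded into $r$. By the upward $\Sigma_1$-absoluteness argument given at the end of Section 3, this forces $L[r]$, and hence $W[G]$, to genuinely see $\omega_1$-branches through the trees $S_{\omega\gamma + 2n + (1\text{ or }0)}$ dictated by the bits of $z$, for each $\gamma \in b$. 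Now I must argue that the only way such a configuration of branches can arise in $W[G]$ is if some allowable stage $\beta$ deliberately produced it, i.e. $F(\beta)$ listed (a name for) $z$. This is where independence of $\vec{S}$ does the work: in $W$, no $S_\alpha$ (for $\alpha \geq 0$) is Suslin, but $\vec{S}$ is independent in the inner model $L[\forceQ^0][\forceQ^2]$, and by Lemma \ref{PullingOutATree}, allowable forcings only add branches through those $S_\alpha$ that they are explicitly told to add branches through, leaving all other $S_\beta$ branchless (Suslin in the relevant inner model, hence without a new cofinal branch in $W[G]$). Since $b$ is a cofinal branch through $S_{-1}$ and the coding areas attached to distinct allowable stages form an almost disjoint family of subsets of $\omega_1$, the branch $b$ coded into $r$ must, on a tail, agree with exactly one coding area used at some stage $\beta$; on that coding area the pattern of $\vec{S}$-branches present in $W[G]$ is exactly the one coding $x_\beta$, forcing $z = x_\beta$.

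More carefully, the argument should go: fix $\gamma \in b$ large enough that $\gamma$ lies in the coding area of some stage $\beta$ (such $\gamma$ exist since $b$ is cofinal and the coding areas partition a club-like portion of the branch structure — or, if $b$ is a genuinely new branch through $S_{-1}$ not equal to any coding-stage branch, derive a contradiction from the fact that $S_{-1}$ is $n$-fold Suslin off the generic branch, so no allowable forcing followed by the tail adds a branch through $S_{-1}$ other than those added at the designated stages). For this $\gamma$, the bits of $z$ read off via $(\ast)_\gamma$ in $W[G]$ must match the bits of $x_\beta$, because the stage-$\beta$ forcing added branches through precisely the $x_\beta$-pattern of trees in that $\omega$-block, and by independence (Lemma \ref{PullingOutATree} applied to the tail of the iteration after stage $\beta$) no further branches were added through the complementary trees in that block. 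Hence $z$ and $x_\beta$ have the same bits, so $z = x_\beta$.

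\textbf{Main obstacle.} The crux is controlling which branches through $\vec{S}$ exist in the full model $W[G]$: one must rule out that the iteration, or the generic $r$ itself, conspires to create an $\omega_1$-branch through some $S_\alpha$ that no stage intended. This rests on (i) the independence of $\vec{S}$ being preserved by allowable forcings, which is Lemma \ref{PullingOutATree} together with the Suslin-preservation results of Section 2 (almost disjoint coding preserves Suslin trees, Lemma \ref{a.d.coding preserves Suslin trees}, and club-shooting through the $R_\alpha$ preserves Suslin trees), and (ii) the fact that $S_{-1}$ being $n$-fold Suslin off the generic branch prevents spurious branches through $S_{-1}$, so the branch $b$ decoded from any $\Phi$-witness $r$ must be (a tail of) one of the genuine coding-area branches. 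Assembling these preservation facts into a clean statement that "the $\vec{S}$-branch pattern in $W[G]$ is exactly the union of the patterns inserted at the allowable stages" is the real content; once that is in hand, reading off $z = x_\beta$ is immediate.
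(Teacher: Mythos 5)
Your first inclusion is fine, and your overall skeleton for the reverse inclusion (find a tree whose index was not designated by any stage, then use independence of $\vec{S}$ and Lemma \ref{PullingOutATree}) matches the paper's strategy. But there is a genuine conceptual error in how you execute it. You repeatedly assert that the non-designated trees are ``branchless'' in $W[G]$, that allowable forcings leave the other $S_\beta$ ``without a new cofinal branch in $W[G]$'', and that one can read off the bits of $z$ via $(\ast)_\gamma$ ``in $W[G]$''. This is false: the forcing $\forceQ^1=\prod_{\beta<\omega_1}S_\beta$ used in building the ground model $W$ already adds a cofinal branch through \emph{every} tree $S_\alpha$, $\alpha\ge 0$. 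In $W[G]$ every tree in every $\omega$-block has an $\omega_1$-branch, so the pattern of branches existing in $W[G]$ codes nothing. The entire content of the coding is the distinction between ``$S_i$ has a branch in $W[G]$'' (always true) and ``$S_i$ has a branch in $L[r]$ for some \emph{real} $r\in W[G]$'' (true exactly for designated $i$), and your argument conflates the two.

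Consequently the key step is missing. Having located an index $i=\omega\alpha+2n+1\notin A$ such that $L[r]\models S_i$ has a branch, the paper does not argue that no branch through $S_i$ exists in $W[G]$; it argues that no real of $W[G]$ can see one. This requires rearranging the product: since $i\notin A$, the whole iteration $\forceP^G$ can be defined over the inner model $L[\vec{S}][\{b_j : j\ne i\}][H]$, in which $S_i$ is still Suslin by Lemma \ref{PullingOutATree}; one then writes $W[G]$ as that inner model extended by forcing with the Suslin tree $S_i$ itself (adding the $\forceQ^1$-branch $b_i$), which is $\omega$-distributive and hence adds no reals. So every real $r$ of $W[G]$ already lies in the inner model where $S_i$ is Suslin, whence $L[r]$ cannot contain an $\omega_1$-branch through $S_i$. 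Without this ``pull the branch $b_i$ out of the generic and put it back last'' step, your claim that the undesignated trees in a block contribute nothing to what $L[r]$ sees has no justification. (Your side remark about $S_{-1}$ being $n$-fold Suslin off the generic branch blocking spurious $b$'s is also not how the paper proceeds and is not needed: one only needs a single offending index $\omega\alpha+2n+1$, obtained from the cofinality of $b$, the almost disjointness of the coding areas, and $z\ne x_\beta$ for all $\beta$.)
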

\begin{proof}
Let $G$ be $\forceP$ generic over $W$, we work in $W[G]$. Let $g= (b_{\beta} \, : \, {\beta} < \delta)$ be the set of the $\delta$-many $\omega_1$-branches through $S_{-1}$ we use to determine the coding areas in  the factors of $\forceP$.  Note that the family $\{b_{\beta} \,: \, \beta < \delta \}$ forms an almost disjoint family of subsets of $\omega_1$, that is two distinct members of  $\{b_{\beta} \,: \, \beta < \delta \}$ always have countable intersection. Thus there is an $\eta < \omega_1$ such that for arbitrary distinct $\beta_1$, $\beta_2 < \delta$,  $\eta> b_{\beta_1}\cap b_{\beta_2}$.

We assume for a contradiction, that there is a real $(x,y,m)$ which satisfies $\Phi ((x,y,m))$ but $(x,y,m) \notin \{x_{\beta} \mid \beta < \delta \}$.
So there is a real $r$ such that $( {\ast} {\ast} {\ast})$ is true for $r$ and $(x,y,m)$. By the discussion above, $r$ carries enough information so that it codes a coding area $b$ and for every $\gamma \in b$ the following holds true.
\begin{align*}
n \in (x,y,m) \Rightarrow L[r] \models  ``S_{\omega \gamma + 2n+1} \text{ has an $\omega_1$-branch}".
\end{align*}
and
\begin{align*}
n \notin (x,y,m) \Rightarrow L[r] \models ``S_{\omega \gamma + 2n} \text{ has an $\omega_1$-branch}".
\end{align*}

As $(x,y,m)$ is distinct from every $x_{\beta}$, there must be $\aleph_1$-many $\alpha > \eta$ and an $n \in \omega$ such that, without loss of generality,
\[L[r] \models ``S_{\omega \alpha + 2n+1} \text{ has an $\omega_1$-branch}",\]
yet for every $r_{\beta}$ such that $({\ast} {\ast} {\ast} )$ holds true for $r_{\beta} $ and $x_{\beta}$,
\[L[r_{\beta}] \models  ``S_{\omega \alpha + 2n+1} \text{ does not have an $\omega_1$-branch}".\]

We fix an $\alpha$ which is as above. We claim  that there is no real in $W[G]$ such that $W[G] \models L[r] \models ``S_{\omega \alpha + 2n+1}$ has an $\omega_1$-branch$"$, which will be the desired contradiction.

We show this by pulling the generic branch for the forcing $S_{\omega \alpha + 2n+1}$ out of the generic for the  forcing $\forceQ^0 \ast \forceQ^1 \times \forceQ^2 \ast \forceP$ over $L$ which produces $W[G]$. 
Recall that the $\forceP$-generic $G$ produces $\delta$-many cofinal branches $\{b_{\beta} \mid \beta < \delta\}$ through $S_{-1}$. We let $\vec{S} \cup \{S_{-1}\}
 \ast \vec{b} \ast H$ denote a $(\forceQ^0 \ast \forceQ^1 \ast \forceQ^2)$-generic filter over $L$. Working in $W[G]$ we consider 
 \begin{align*}
A:= &\{ \omega \gamma +2n \mid \exists \beta < \delta (\gamma \in b_{\beta} \land n \notin x_{\beta}) \} \cup \\& \{\omega \gamma + 2n+1 \mid \exists \beta< \delta (\gamma \in b_{\beta} \land n \in x_{\beta} )\}.
\end{align*}
Recall that any almost disjoint coding forcing $\mathbb{A} (Y)$ can be defined in an absolute way over any universe containing $Y\subset \omega_1$.
As $\omega\alpha +2n+1 \notin A$  we note that in
\[ W[G]= L[\vec{S} \cup \{ S_{-1} \} ] [\{ b_i \mid i < \omega_1] [H] [G]\]
the forcing $\forceP^G$, whose factors are just branches through $S_{-1}$ and some almost disjoint coding forcings determined by the sets of the $x_{\beta}$'s and $b_{\beta}$'s, can already be defined in the inner model
\[ L[\vec{S}] [ \{ b_i \mid i < \omega_1, i \ne \omega\alpha +2n+1 \} [H] [G] \]
as can be seen by induction on the length of the iteration $\forceP$, using the fact that
$S_{\omega \alpha+2n+1}$ is still Suslin in $L[\vec{S}]  [\{ b_i \mid i < \omega_1, i \ne \omega\alpha +2n+1 \} [H] [G] $.

As a consequence, we can exploit the properties of product forcings to rearrange and obtain the equality
\begin{align*}
L[\vec{S} ] [ \{ b_i \mid i < \omega_1 \} ]& [H] [G]= \\& L[\vec{S}]  [ \{ b_i \mid i < \omega_1, i \ne \omega\alpha +2n+1 \} ] [H] [G] [b_{\omega\alpha +2n+1}] 
\end{align*}

To ease notation we let $B= \{ i <\omega_1 \mid i \ne \omega \alpha +2n +1 \}$
We claim now that $S_{\omega\alpha +2n+1}$ is a Suslin tree in $L[\vec{S} ] [\{b_{\beta} \mid \beta \in B \} ] [H] [G]$. This is clear by lemma \ref{PullingOutATree}.

Note now that, as forcing with $S_{\alpha+2n+1}$ is $\omega$-distributive, the reals will not change when passing from $L[\vec{S} ] [b_{\beta} \mid \beta \in B] [H] [G]$ to
$L[\vec{S} ] [b_{\beta} \mid \beta \in B] [H] [G] [b_{\alpha}]=W[G]$. 
 But this implies that 
\[L[\vec{S} ] [b_{\beta} \mid \beta \in B] [H] [G] \models \lnot \exists r L[r] \models `` S_{\alpha+2n+1} \text{ has an $\omega_1$-branch}" \]
as the existence of an $\omega_1$-branch through $S_{\alpha+2n+1}$ in the inner model $L[r]$ would imply the existence of such a branch in $L[\vec{S} ] [b_{\beta} \mid \beta \in B] [H] [G]$. Further, as no new reals appear when passing to $W[G]$ we also get 
\[W[G] \models \lnot \exists r L[r] \models `` S_{\alpha+2n+1} \text{ has an $\omega_1$-branch}". \]
This is the desired contradiction and as $G$ was an arbitrary generic filter the lemma is proven.

\end{proof}

\section{$\alpha$-allowability}

We define the derivative acting on the set of allowable forcings over \( W \). Inductively, for an ordinal \( \alpha \) and any bookkeeping function \( F \in W \), we assume that the notion of \( \zeta \)-allowable with respect to \( F \) has already been defined for every \( \zeta < \alpha \). Specifically, this means that for each \( \zeta < \alpha \), we have already defined a set of rules that, in conjunction with a bookkeeping function \( F \in W \), produces the following over \( W \):

\begin{itemize}
\item An allowable forcing \( \forceP = \forceP_{\delta} = (\forceP_{\beta} \, : \, \beta < \delta) \in W \), which is the actual forcing used in the iteration. Let \( G_{\delta} \) denote a \( \forceP_{\delta} \)-generic filter over \( W \).
  
\item A set \( I = \dot{I}_{\delta}^{G_{\delta}} = \) \( \{ (\dot{x}^{G_{\delta}}, \dot{y}^{G_{\delta}}, \dot{m}^{G_{\delta}}, \dot{\gamma}^{G_{\delta}}) : \dot{m} , \) \( \dot{x}, \dot{y}, \dot{\gamma} \text{ are } \forceP \text{-names for elements of } \omega, 2^{\omega}, \omega_1 \} \). The set \( I \in W[G_{\delta}] \) contains potential values for the uniformizing function \( f \) that we want to define. Note that for a given \( x \) and \( m \), there may be several values \( (x, y_1, m, \xi_1), \dots, (x, y_n, m, \xi_n) \). We say that \( (x, y, m) \) has rank \( \xi \) if \( (x, y, m, \xi) \in I \). There can be multiple ranks for a given \( (x, y, m) \). The goal is to use the \( (x, y, m) \)'s with the minimal rank, and among those, choose the one with the least name according to the fixed well-order of $L$ in the background, ensuring the well-definedness of our choice.
\end{itemize}

Following our established terminology, if applying the rules for \( \eta \)-allowable forcings over \( W \) and \( F \in W \) results in the pair \( (\forceP, I) \in W \), we say that \( \forceP \) is \( \eta \)-allowable with respect to \( F \) (over \( W \)), or simply that \( \forceP \) is \( \eta \)-allowable if there exists an \( F \) and an \( I \) such that \( \forceP \) is \( \eta \)-allowable with respect to \( F \).

We now define the derivation of the \( <\alpha \)-allowable forcings over \( W \), which we call \( \alpha \)-allowable (again over \( W \)). The definition is a uniform extension of 0-allowability. 

A \( \delta < \omega_1 \)-length iteration \( \forceP = (\forceP_{\beta} : \beta < \delta) \in W \) is called \( \alpha \)-allowable over \( W \) (or relative to \( W \)) if it is recursively constructed using two ingredients. First, a bookkeeping function \( F \in W \), \( F : \delta \to W^3 \), where for each \( \beta < \delta \), we write \( F(\beta) = ((F(\beta)_0, F(\beta)_1, F(\beta)_2)) \) for the corresponding values of the coordinates. Second, a set of rules similar to those for 0-allowability, with two additional rules added at each step of the derivative, which determine, along with \( F \), how the iteration \( \forceP \) and the set of \( f \)-values \( I \) are constructed.

The infinite set of rules is defined as follows. Fix a bookkeeping function \( F \in W \), \( F : \delta \to W^3 \), for \( \delta < \omega_1 \). Assume we are at stage \( \beta \) of our construction and that, inductively, we have already created the following list of objects:

\begin{itemize}
\item The forcing \( \forceP_{\beta} \in W \) up to stage \( \beta \), along with a \( \forceP_{\beta} \)-generic filter \( G_{\beta} \) over \( W \).
  
\item The set \( I_{\beta} = \dot{I}_{\beta}^{G_{\beta}} = \{ (\dot{x}^{G_{\beta}}, \dot{y}^{G_{\beta}}, \dot{m}^{G_{\beta}}, \dot{\zeta}^{G_{\beta}}) : \dot{m}, \dot{x}, \dot{y}, \dot{\zeta} \text{ are } \forceP_{\beta} \text{-names for elements of } \omega, 2^{\omega}, \omega_1 \} \), containing potential values for the uniformizing function \( \dot{f}^{G_{\beta}}(m, \cdot) \). Initially, we set \( I_0 = \emptyset \).
\end{itemize}

The set of possible \( f \)-values will change as the iteration progresses. Specifically, values for \( f \) must be added when a new, lower-ranked value of \( \dot{f}^{G_{\beta}}(m, x) \) is encountered. 

Now, working in \( W[G_{\beta}] \), we define the next forcing \( \forceP(\beta) \) and possibly update the set of possible values for the uniformizing function \( f(m, x) \). Assume that \( F(\beta)_0 = (\dot{x}, \dot{y}, \dot{m}) \), and let \( A \subset \beta \), \( A \in W \), be such that \( \dot{x}, \dot{y}, \dot{m} \) are \( \forceP_A = \ast_{\eta \in A} \forceP(\eta) \)-names, where we require that \( \forceP_A \in W \) is a subforcing of \( \forceP_{\beta} \) (e.g., if \( A = \gamma < \beta \) and \( F(\beta)_0 \) lists \( \forceP_{\gamma} \)-names). Let \( G_A := G_{\beta} \upharpoonright A \). We then set \( x = \dot{x}^{G_A}, y = \dot{y}^{G_A} \), and \(m= \dot{m}^{G_A} \), and proceed as follows:

\begin{enumerate}
\item[(a)]  
  \begin{itemize}
  \item[] There exists an ordinal \( \zeta < \alpha+1 \), chosen to be minimal, such that:
  \item[] First, we collect all \( \forceP_A \)-names for reals \( \dot{a} \). For each \( \forceP_A \)-name \( \dot{a} \), we pick the \( < \)-least nice name \( \dot{b} \) such that \( \dot{a}^{G_{\beta}} = \dot{b}^{G_{\beta}} \), and collect these names \( \dot{b} \) into a set \( C \). We assume that there is a \( < \)-least nice \( \forceP_A \)-name \( \dot{y_0} \) in \( C \) such that \( \dot{y_0}^{G_A} = y_0 \), 
  \[
  W[G_{\beta}] \models (x, y_0) \in A_m
  \]
  and there is no \( \zeta \)-allowable forcing \( \forceR \triangleright \forceP_{\beta} \), \( \forceR \in W \), extending \( \forceP_{\beta} \) such that 
  \[
  W[G_{\beta}] \models \forceR / G_{\beta} \Vdash (x, y_0) \notin A_m.
  \]
  If this condition holds, we proceed as follows:
  \end{itemize}

  \begin{itemize}
  \item Assume that \( F(\beta)_2 = (\dot{x}, \dot{z}, \dot{m}) \) is a triple of \( \forceP_A \)-names, with \( \dot{x}^{G_A} = x \), \( \dot{z}^{G_A} = z \neq y_0 \), and \( \dot{m}^{G_A} = m \). We define:
  \[
  \forceP(\beta) := \text{Code}(x, z, m).
  \]
  If the bookkeeping function does not have the desired form, we choose the \( < \)-least names of the desired objects and use them to define the forcing. In this case, we pick the \( < \)-

least \( \forceP_A \)-name for a countable ordinal \( \dot{\eta} \), and let \( \dot{z} \) be the \( < \)-least \( \forceP_A \)-name of a real such that \( \dot{z}^{G_A} \neq y_0 \). Then:
  \[
  \forceP(\beta) := \text{Code}(x, z, m).
  \]
  We also set \( \forceP_{\beta+1} = \forceP_{\beta} \ast \forceP(\beta) \) and let \( G_{\beta+1} = G_{\beta} \ast G(\beta) \) be its generic filter.

  \item We assign a new value to \( f \), i.e., set \( f(m, x) := y_0 \) and assign the rank \( \zeta \) to the value \( (x, y_0, m) \) in \( W[G_{\beta+1}] \). We update \( I_{\beta+1}^{G_{\beta+1}} := I_{\beta}^{G_{\beta}} \cup \{ (x, y_0, m, \zeta) \} \).
  \end{itemize}

\item[(b)] If case (a) does not apply, i.e., for each \( \zeta < \alpha \) and each pair of reals, the pair can be forced out of \( A_m \) by a \( \zeta \)-allowable forcing extending the current one, we let the bookkeeping function \( F \) fully determine what to force. We assume that \( F(\beta)_1 \) is a \( \forceP_A \)-name for a countable ordinal \( \dot{\eta} \), and let \( \dot{\eta}^{G_A} = \eta \). We assume that \( F(\beta)_2 \) is a nice \( \forceP_A \)-name for a pair of reals \( (\dot{x'}, \dot{y_0}) \) such that \( \dot{x'}^{G_A} = x \). We define:
  \[
  \forceP(\beta) := \text{Code}(x, y_0, m).
  \]
  Let \( G(\beta) \) be a \( \forceP(\beta) \)-generic filter over \( W[G_{\beta}] \) and set \( G_{\beta+1} = G_{\beta} \ast G(\beta) \).

  We do not update the set \( I_{\beta} \) of preliminary values for \( f \), i.e., we set \( I_{\beta+1} := I_{\beta} \).

  Otherwise, we choose the \( < \)-least \( \forceP_A \)-names for the desired objects \( g_{\eta} \) and \( (x, z, m) \), and force with:
  \[
  \forceP(\beta) := \text{Code}(x, z, m).
  \]

\item[(c)] If \( F(\beta) = (\dot{z_0}, \dot{z_1}) \) for two \( \forceP_{\beta} \)-names of reals \( \dot{z_0} \) and \( \dot{z_1} \), and if we let \( z_i = \dot{z}_i^{G_{\beta}} \), we either force with:

  \begin{itemize}
  \item[] \( \forceP(\beta) := \text{Code}(z_0, z_1) = S_{-1} \ast \dot{\mathbb{A}}(\dot{Y}) \),
  \item[] or \( \forceP(\beta) := \text{Code}(z_1, z_0) = S_{-1} \ast \dot{\mathbb{A}}(\dot{Y}) \),
  \end{itemize}
  depending on whether \( \sigma_0 < \sigma_1 \) or \( \sigma_1 < \sigma_0 \), where \( \sigma_0 \) and \( \sigma_1 \) are as defined in the discussion of 0-allowability above.
\end{enumerate}

At limit stages \( \eta \) of \( \alpha + 1 \)-allowable forcings, we use countable support:
\[
\forceP_{\eta} := \text{inv} \, \lim (\forceP_{\nu} : \nu < \eta).
\]
Finally, we set:
\[
I_{\eta}^{G_{\eta}} := \{ (m, x, y, \zeta) : \exists \xi < \eta \, ((m, x, y, \zeta) \in I_{\xi}^{G_{\xi}}) \}.
\]
This concludes the definition of the rules for \( \alpha + 1 \)-allowability over the ground model \( W \). To summarize:

\begin{definition}
Assume that $F \in W$, $F: \eta \rightarrow W^3$ is a bookkeeping function and that $\forceP=(\forceP_{\beta} \, : \, \beta < \eta)$ and $I=(I_{\beta} \, : \, \beta < \eta)$ is the result of applying the above defined rules together with $F$ over $W$. Then we say that $(\forceP,I)$ is $\alpha+1$-allowable with respect to $F$ (over $W$). Often, $I$ is clear from context, and we will just say $\forceP$ is $\alpha+1$-allowable with respect to $F$. We also say that $\forceP$ is $\alpha+1$-allowable over $W$ if there is an $F$ such that $\forceP$ is $\alpha+1$-allowable with respect to $F$.
\end{definition}
We add a couple of remarks concerning the definition of $\alpha+1$-allowable:

\begin{itemize}

\item   Once a value \( f_m(x) \) of rank \( \zeta < \alpha \) is identified during an \( \alpha \)-allowable iteration, the pair \( (x, f_m(x)) \) will remain part of \( A_m \) in all subsequent outer models generated by a \( \zeta \)-allowable extension. This ensures that once a value is locked in for \( f_m(x) \), it is impossible for a further \( \zeta \)-allowable forcing to destroy the membership \( (x,f_m (x) ) \in A_m \).

\item In case (a) of the definition, we focus on finding a pair \( (x, y) \) that satisfies \( \varphi_m \) in the inner model \( W[G_A] \), and cannot be removed from \( A_m \) by any further \( < \alpha \)-allowable forcing. Such a pair is a candidate for \( f_m(x) \). However, there may be better candidates for \( f_m(x) \), which could appear later in the forcing iteration or in another inner model, where ``better$"$ means having a lower rank. The selection process first minimizes the rank \( \zeta \) of the potential value \( f(m,x) \), which makes it easier to show that the notion of \( \alpha \)-allowability strengthens as \( \alpha \) increases. After minimizing rank, the next step is to minimize the set of triples \( (\dot{x}, \dot{y}, \dot{m}) \) based on the \( < \)-least ordering.

\item  The definition of \( \alpha+1 \)-allowable imposes an additional restriction compared to \( \alpha \)-allowable. In case (a), it considers not only forcings that are \( \beta \)-allowable for \( \beta < \alpha \), but also those that are \( \alpha \)-allowable. This creates a natural progression in the definitional strength, and as \( \alpha \) increases, the set of \( \alpha \)-allowable forcings becomes smaller. This results in more pairs of reals \( (x, y) \in A_m \) that can no longer be eliminated by further \( \alpha \)-allowable forcings. Consequently, case (a) will apply more frequently, leading to more restrictions in the definition of \( \alpha \)-allowable as \( \alpha \) increases.

\end{itemize}

\par \medskip

\begin{lemma}\label{shrinkinglemma}
Work in $W$. If $\forceP$ is $\beta$-allowable over $W$ and $\alpha < \beta$, then $\forceP$ is $\alpha$-allowable over $W$. Thus the sequence of $\alpha$-allowable forcings (over $W$) is decreasing with respect to the $\subset$-relation.
\end{lemma}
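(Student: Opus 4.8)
The plan is to prove, by induction on $\beta$, the statement $\Psi(\beta)$: every $\beta$-allowable forcing over $W$ is $\alpha$-allowable over $W$ for every $\alpha < \beta$; the ``decreasing'' conclusion is then just a reformulation. So fix $\beta$ and assume $\Psi(\beta')$ for all $\beta' < \beta$. The first use of the hypothesis is to record a monotonicity of the auxiliary notions occurring inside the clauses of $\beta$-allowability: for $\zeta' < \zeta < \beta$ every $\zeta$-allowable forcing is $\zeta'$-allowable, which is exactly $\Psi(\zeta)$ and is available since $\zeta < \beta$. Consequently, for a fixed stage $\gamma$, fixed $x, m$ and a fixed $\forceP_\gamma$-name $\dot{y}_0$, the predicate
\[
N_{\dot{y}_0}(\zeta):\quad\text{no }\zeta\text{-allowable }\forceR \triangleright \forceP_\gamma\text{ has }\forceR / G_\gamma \Vdash (x,y_0) \notin A_m
\]
is monotone non-decreasing in $\zeta$ on $\zeta < \beta$ (fewer $\zeta$-allowable extensions survive as $\zeta$ grows), and hence so is the predicate $P(\zeta)$ asserting the existence of such a $\dot{y}_0$ with $(x, y_0) \in A_m$ and $N_{\dot{y}_0}(\zeta)$, which is precisely the trigger of case (a). In particular, over $\zeta < \beta$, the condition ``$\exists \zeta < \alpha\, P(\zeta)$'' is equivalent to ``$\min\{\zeta < \beta : P(\zeta)\} < \alpha$'', and whenever it holds the minimal witnessing rank and the $<$-least witness $\dot{y}_0$ computed by the $\alpha$-rules coincide with those computed by the $\beta$-rules.

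Next, given a $\beta$-allowable $\forceP = (\forceP_\gamma : \gamma < \delta)$ with respect to $F \in W$, I would construct a bookkeeping $F' \in W$ by recursion on $\gamma < \delta$, maintaining the invariant that running the $\alpha$-allowable rules against $F'$ reconstructs exactly $\forceP_\gamma$ at stage $\gamma$ (the accompanying sets of $f$-values need not match, which is irrelevant here). If $F(\gamma)$ is a pair of real names, case (c) was used; since case (c) is verbatim identical at every allowability level and its $\operatorname{Code}(z_0, z_1)$-versus-$\operatorname{Code}(z_1, z_0)$ decision depends only on the global well-order of $L$, put $F'(\gamma) := F(\gamma)$. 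Otherwise cases (a)/(b) were in play. By the first paragraph, case (a) of the $\alpha$-run fires on exactly the Boolean condition ``$\exists \zeta < \alpha\, P(\zeta)$'', which is the same Boolean condition as ``the $\beta$-run uses case (a) with witnessing rank $< \alpha$''; on it both runs compute the same rank and the same $\dot{y}_0$, so letting the relevant coordinates of $F'(\gamma)$ copy those of $F(\gamma)$ makes the $\alpha$-run emit the identical $\operatorname{Code}$ forcing. On the complementary condition the $\alpha$-run uses case (b), while the $\beta$-run is in case (a) with a rank in $[\alpha, \beta)$ or in case (b); in each subcase it emits some $\operatorname{Code}(x, w, m)$ with $w$ a $\forceP_\gamma$-name computable in $W$ from $F$ and $\forceP_\gamma$, and I would arrange $F'(\gamma)$ so that on this condition it presents exactly the pair required by case (b) to emit that very same $\operatorname{Code}(x, w, m)$. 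At limit stages both constructions take the countable-support inverse limit of what has been built, so the invariant persists; since $F, \forceP$ and all names involved lie in $W$, so does $F'$, and $\forceP$ is $\alpha$-allowable with respect to $F'$.

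The main obstacle is the construction of the single object $F'(\gamma)$ in this last case: the $\alpha$-rules want $F'(\gamma)$ to look like a triple on the part of the generic space where case (a) fires (so that case (a) reproduces the $\beta$-run's case-(a) output) and like a pair on the complement (so that case (b) reproduces the $\beta$-run's output there), and all of this must be realized by one branching $\forceP_\gamma$-name in $W$, while also respecting the ``if the bookkeeping is not of the desired form, substitute the $<$-least name'' fallbacks built into both clauses --- one has to verify that the $\alpha$-run and the $\beta$-run invoke the very same fallback whenever either of them does. The remaining point, essentially internal to the induction, is that every appeal to the monotonicity of $\zeta$-allowable classes is confined to $\zeta < \beta$, so $\Psi(\beta)$ rests only on instances $\Psi(\zeta)$ with $\zeta < \beta$ and the recursion is not circular.
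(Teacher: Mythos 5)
Your proposal is correct and follows essentially the same route as the paper: both arguments build a new bookkeeping $F'$ stage by stage so that the $\alpha$-allowable rules reproduce exactly the forcings emitted by the $\beta$-allowable run, observing that the only possible divergence is a stage where the $\beta$-run fires case (a) with a rank above $\alpha$ while the $\alpha$-run falls into case (b), and patching $F'$ there so that case (b) emits the very same $\operatorname{Code}$ forcing. Your outer induction on $\beta$ and the monotonicity of the case-(a) trigger are harmless but not actually needed (the trigger condition ``$\exists\zeta\le\alpha\,P(\zeta)$'' compares correctly with ``$\exists\zeta\le\beta\,P(\zeta)$'' without them), and the ``single branching name'' obstacle you flag is present in the paper's proof as well, where it is dispatched with the remark that the use of $G_\gamma$ is uniform and can be eliminated in the usual way.
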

\begin{proof}
Let \( \alpha < \beta \), and suppose \( \forceP \) is a \( \beta \)-allowable forcing with a bookkeeping function \( F \in W \) that determines \( \forceP \) according to the rules outlined previously. We will demonstrate that there exists a bookkeeping function \( F' \in W \) such that \( \forceP \) can also be viewed as an \( \alpha \)-allowable forcing determined by \( F' \).

The key idea is to define \( F' \) so that it simulates the reasoning for a \( \beta \)-allowable forcing at every stage, even though it will ultimately define an \( \alpha \)-allowable forcing.

We start by setting \( F'(\eta) = F(\eta) \) for all stages \( \eta \) until we reach the first stage, denoted \( \gamma \), where the case applied under the \( \alpha \)-allowable rules differs from the case applied under the \( \beta \)-allowable rules. 

By the minimality of \( \gamma \), the iteration \( \forceP_{\gamma} \) must agree when considered as both \( \alpha \)-allowable and \( \beta \)-allowable at this stage. At \( \gamma \), applying the \( \beta \)-allowable rules results in case (a), while applying the \( \alpha \)-allowable rules results in case (b).

At this stage, we have \( F(\gamma)_0 = (\dot{x}, \dot{y}, m) \), and \( G_{\gamma} \) is the generic filter for \( \forceP \), so that \( x = \dot{x}^{G_{\gamma}} \) and \( y = \dot{y}^{G_{\gamma}} \). There is a potential \( f(m,x) \)-value with rank \( \le \beta \) in \( W[G_A] \), where \( A \subset \gamma \) is such that \( \dot{x} \) and \( \dot{y} \) are \( \forceP_A \)-names, and \( (x, y) \in W[G_A] \). However, there is no potential \( f(m,x) \)-value with rank \( \le \alpha \).

In this case, we have the following:
\begin{itemize}
\item[-] A quadruple \( (x, y_0, m, \xi) \in I_{\gamma+1} \), where \( \xi \in (\alpha, \beta] \), and \( (x, y_0) \in W[G_A] \), with \( A \subset \gamma \) such that \( \dot{x} \) and \( \dot{y}_0 \) are \( \forceP_A \)-names.
\item[-]  A forcing \( \forceP(\gamma) = \text{Code}(\dot{x}, z, m) \) for a real \( z \neq y_0 \).
\end{itemize}

We now define \( F'(\gamma) = (F'(\gamma)_0, F'(\gamma)_1, F'(\gamma)_2) \) such that \( F'(\gamma)_0 = F(\gamma)_0 \) and \( \forceP(\gamma) \) is guessed correctly by \( F'(\gamma) \). Specifically, we define:
\begin{itemize}
\item[-]  \( F'(\gamma)_1 \) as \( (x, z, m) \),
\item[-] \( F'(\gamma)_2^{G_{\gamma}} := \eta \).
\end{itemize}

Note that the definition of \( F' \) is entirely within \( W \), and the use of \( G_{\gamma} \) is uniform, so it can be eliminated in the usual way.

The key observation is that when applying the rules for \( \alpha \)-allowable forcings at stage \( \gamma \) using \( F' \), the results will match those from the \( \beta \)-allowable iteration using \( F \). 

Thus, for the least stage \( \gamma \) where different cases apply when using \( F \) for \( \alpha \)-allowable and \( \beta \)-allowable forcings, we can define an \( F' \) such that \( \forceP_{\gamma+1} \) is an \( \alpha \)-allowable iteration, while it is also a \( \beta \)-allowable iteration using \( F \). 

This argument can be iterated. After handling \( (m, x, y) \) at stage \( \gamma \), we move to the next stage \( \gamma' > \gamma \) where the rules for \( \beta \)-allowable forcings (using \( F \)) yield a different case than those for \( \alpha \)-allowable forcings (using \( F' \)). We apply the same reasoning to show that we can continue treating the iteration as \( \alpha \)-allowable as we proceed with \( \forceP \), handling new triples in the same way. Thus, there exists a bookkeeping function \( F' \) such that \( \forceP \) is \( \alpha \)-allowable with respect to \( F' \).

\end{proof}

\begin{lemma}\label{alphaallowableclosedunderproducts}
Let $F_1 : \delta_1 \rightarrow W^3$ be a bookkeeping function which determines an $\alpha$-allowable forcing $\forceP^1=(\forceP^1_{\beta} \, : \,  \beta < \delta_1)$. Likewise let $F_2 : \delta_2 \rightarrow W^3$ be a bookkeeping function which determines an $\alpha$-allowable forcing $\forceP^2=(\forceP^2_{\beta} \, : \,  \beta < \delta_2)$.
Then the product $\forceP^1 \times \forceP^2$ is $\alpha$-allowable relative to a bookkeeping function $F \in W$ which is definable from $F_1$ and $F_2$.
\end{lemma}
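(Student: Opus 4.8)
The plan is to prove the statement by induction on $\alpha$, so that simultaneously for every $\alpha$ the product of two $\alpha$-allowable forcings is $\alpha$-allowable. The base case $\alpha=0$ is exactly the fourth item of the basic-properties lemma for allowable forcings: the bookkeeping witnessing $0$-allowability of $\forceP^1\times\forceP^2\cong\forceP^1\ast\check{\forceP}^2$ is the concatenation $F:=F_1{}^\frown F_2$ (with the $\check{\forceP}^1$-names for conditions of $\forceP^2$ identified with conditions of $\forceP^2$), and the discussion there on the agreement of the well-order $<$ of the reals --- the object governing case (c) --- between $W[\forceP^1]$ and $W[\forceP^1\times\forceP^2]$ disposes of the second-case coding. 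For the inductive step I take $F:=F_1{}^\frown F_2$ again (shifting $\operatorname{dom}(F_2)$ by $\delta_1$ and reinterpreting each $\forceP^2_A$-name occurring in $F_2$ as a name over the subforcing $\forceP^1\ast\check{\forceP}^2_A$ of $\forceP^1\ast\check{\forceP}^2$), and show that running the $\alpha$-allowable rules with $F$ over $W$ produces exactly the iteration $\forceP^1\ast\check{\forceP}^2$ together with a value-set sequence $I$ which is the evident refinement of $I^1$ and $I^2$.

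The first $\delta_1$ stages --- call this the $F_1$-phase --- go through immediately: at stage $\beta<\delta_1$ the current forcing is $\forceP^1_\beta$ and the current filter is $G^1_\beta$, and the case-(a) clause inspects the collection of all $\zeta$-allowable $\forceR\triangleright\forceP^1_\beta$ in $W$, which depends only on $\forceP^1_\beta$ and is insensitive to the planned $F_2$-phase; hence the cases applied, the $<$-least names chosen, the ranks assigned, and the updates of $I$ coincide with those in the original construction of $\forceP^1$. In the $F_2$-phase, at stage $\delta_1+\eta$ the current forcing is $\forceP^1\ast\check{\forceP}^2_\eta$ and the filter is $G^1\ast G^2_\eta$. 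The reals read off the bookkeeping are unchanged: since the names $\dot x,\dot y,\dot m$ and the set $A\subseteq\eta$ below them are $\forceP^2_A$-names, $\dot x^{G^1\ast(G^2\upharpoonright A)}=\dot x^{G^2\upharpoonright A}$, and likewise for $\dot y,\dot m$. The whole content of the proof is then the claim that the case-(a)/(b) dichotomy and the rank computation at this stage reproduce those in the original $\forceP^2$-construction; once this is granted, the choice of the $<$-least names $y_0$ and $z$, the rank $\zeta$, the forcing $\text{Code}(x,z,m)$, and the update of $I_{\beta+1}$ are literally the same computation, case (c) is as in the base case, and limit stages are immediate since both iterations take countable-support inverse limits.

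To establish this claim I would use two ingredients. First, the inductive hypothesis (products of $\zeta$-allowable forcings are $\zeta$-allowable for all $\zeta<\alpha$), together with the remark that allowable forcings use an almost disjoint family of coding areas --- automatic here, since the product forces with $S_{-1}$ coordinatewise and the resulting branches through $S_{-1}$ are pairwise almost disjoint --- and Lemma~\ref{nounwantedcodes}: these give that the $\zeta$-allowable forcings $\forceR\triangleright\forceP^1\ast\check{\forceP}^2_\eta$ are, up to dense embedding, exactly the forcings $\forceP^1\ast\check{\forceR}'$ for $\zeta$-allowable $\forceR'\triangleright\forceP^2_\eta$, in both directions. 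Second, an absoluteness statement: for the relevant pair $(x,y_0)$ and each rank candidate $\zeta<\alpha$, the predicate ``$(x,y_0)\in A_m$, and no $\zeta$-allowable $\forceR'\triangleright\forceP^2_\eta$ forces $(x,y_0)\notin A_m$'' is unaffected by passing from $W[G^2_\eta]$ to its $\forceP^1$-extension $W[G^1\ast G^2_\eta]$. This rests on $\Sigma^1_3$-upward absoluteness of $A_m$ (so that membership witnessed in $W[G^2_\eta]$ persists) and on the fact that prepending $\forceP^1$ writes no code of $(x,y_0,m)$ onto any coding area available to $\forceP^2$, so that no $\zeta$-allowable $\forceR'$ over the larger model can force $(x,y_0)\notin A_m$ unless one already could over the smaller --- again by Lemma~\ref{nounwantedcodes} applied to the product and the almost-disjointness of the two blocks of coding areas.

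I expect the main obstacle to be exactly this last point: showing that the rank-minimization performed in case (a) during the $F_2$-phase is genuinely insensitive to whether the $\forceP^1$-part of the product has already been forced, i.e.\ that the class of $\zeta$-allowable extensions of the ``prepended'' forcing $\forceP^1\ast\check{\forceP}^2_\eta$ and the relevant instances of $A_m$-membership behave absolutely across the product. Everything else (the concatenated bookkeeping, the consistency of the well-order $<$ of case (c), the handling of limits, and the bookkeeping of $I$) is routine once this absoluteness is in place, and it is precisely the almost-disjointness of the coding areas of $\forceP^1$ and $\forceP^2$, guaranteed by the product structure, that prevents cross-contamination and makes the argument go through.
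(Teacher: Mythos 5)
Your proposal is correct and follows essentially the same route as the paper: induction on $\alpha$ with the concatenated bookkeeping $F_1{}^{\frown}F_2$, the first $\delta_1$ stages passing through unchanged, and the crux being that the case-(a)/(b) dichotomy in the $F_2$-phase is preserved, which both you and the paper settle by combining the induction hypothesis (products of $\zeta$-allowable forcings are $\zeta$-allowable) with upward $\Sigma^1_3$-absoluteness of $(x,y)\notin A_m$. The only divergence is cosmetic: for the converse direction the paper simply observes that a $\zeta$-allowable extension of $\forceP^1\times\forceP^2_{\eta}$ already witnesses case (b) over $\forceP^2_{\eta}$ itself, rather than your stronger (and unnecessary) claim that every such extension is, up to dense embedding, of the form $\forceP^1\ast\check{\forceR}'$.
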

\begin{proof}
We will prove it by induction on $\alpha$. For $\alpha=0$ the lemma is true.

Now assume that the lemma is true for $\alpha$. We shall argue that it is also true for $\alpha+1$.

We shall define a bookkeeping function $F'$ such that $\forceP^1 \times \forceP^2$ is $\alpha+1$-allowable relative to $F'$.
For ordinals $\beta < \delta_1$ we let $F'(\beta)=F_1(\beta)$. Then the $(\alpha+1)$-allowable forcing which will be produced on the first $\delta_1$-many stages is $\forceP^1$.

For $\beta> \delta_1$, we let $F'(\beta)= F_2( \beta -\delta_1)$. Then we claim that $F' \upharpoonright [\delta_1,\delta_1+\delta_2)$ using the rules of $(\alpha+1)$-allowability will produce $\forceP^2$.

First we prove by induction on $\beta \in [\delta_1, \delta_1 +\delta_2)$ that if $\beta$ is a stage such that case (b) applies when building the forcing using $F'$ over $W$, then case (b) also must apply at $\beta - \delta_1$ when building $\forceP^2$ over $W$ using $F_2$ and vice versa.

Assume first that at stage ${\beta-\delta_1}$, $\forceP^2_{\beta-\delta_1}$ is defined and case (b) applies there. That is,
if $F( {\beta-\delta_1})_0= (\dot{x}, \dot{y}, \dot{m})$ and  $G^2_{\beta-\delta_1}$ is our $\forceP^2_{\beta-\delta_1}$-generic filter over $W$, for every $\zeta < \alpha+1$ there is a $\zeta$-allowable $\forceQ  \triangleright \forceP^2_{\beta-\delta_1}$ such that \[ \forceQ \slash G^2_{\beta-\delta_1} \Vdash (\dot{x}^{ G^2_{\beta-\delta_1}},\dot{y}^{ G^2_{\beta-\delta_1}} ) \notin A_{\dot{m}^{G^2_{\beta-\delta_1}}}. \]
But then also $\forceP^1 \times \forceQ \triangleright \forceP^1 \times \forceP^2_{\beta}$ is $\zeta$-allowable, by the induction hypothesis and the previous lemma and 
\[ \forceP^1 \times (\forceQ \slash G^2_{\beta}) \Vdash (\dot{x}^{G^2_{\beta}} ,\dot{y}^{G^2_{\beta}}) \notin A_{\dot{m}^{G^2_{\beta}}} \] by upwards absoluteness of $\Sigma^1_3$-formulas. Thus we must be in  case (b) at stage $\beta$ as well, when defining $\forceP^1 \times \forceP^2$ using $F'$.

On the other hand if we are at stage $\beta \in [\delta_1, \delta_1+\delta_2)$, $F({\beta})_0= (\dot{x},\dot{y}, \dot{m})$, $G^1 \times G^2_{\beta-\delta_1}$ is an arbitrary $\forceP^1 \times \forceP^2_{\beta-\delta_1}$-generic filter over $W$ and we are in case (b) when defining the next forcing using $F'$, then, for every $\zeta < \alpha+1$ there is a $\zeta$-allowable $\forceR^{\zeta}  \triangleright \forceP^1 \times \forceP^2_{{\beta-\delta_1}}$ such that
\[ \forceR^{\zeta}\slash (G^1 \times G^2_{\beta-\delta_1} )\Vdash ({x},{y}) \notin A_m.\]  But then $\forceR^{\zeta} \triangleright \forceP^2_{{\beta-\delta_1}}$ is true, and the set  of$\forceR^{\zeta}$'s witnesses that we must be in case (b) at stage $\beta-\delta_1$ as well, when defining $\forceP^2$ using $F^2$ working over $W[G^2_{\beta-\delta_1}]$.

If we are at case (c) when defining our  $\alpha$-allowable forcing using $F'$ at stage $\beta$, then we must also be in case (c) when building $\forceP^2$ using $F^2$, as the definition just refers to a definable well-order of names, defined in $L$ and thus does not depend on the actual  model where we consider case (c). This reasoning also applies for the other direction, when defining an $\alpha$-allowable forcing over $W$ using $F^2$ and being in case (c), then we force with the very same forcing when
defining the $\alpha$-allowable forcing at stage $\delta_1 +\beta$ using $F'$.

As a consequence we must be in the same cases when defining $\forceP^1 \times \forceP^2$ at stage $\beta$ over $W$ and when defining $\forceP^2$ at stage $\beta -\delta_1$ using $F_2$ over $W$. But then we let $F'(\beta)$ be such that it does exactly what $F_2(\beta-\delta_1)$ does. This implies that $\forceP^1 \times \forceP^2_{\beta+1}$ is $(\alpha+1)$-allowable with respect to $F' \upharpoonright \beta+1$ and the induction step is proven.

For $\beta$ being limit there is noting to show as the $\beta$-th forcing is uniquely determined by $\forceP_{\beta'} , \beta' < \beta$. 
Thus $F'$ witnesses that $\forceP^1 \times \forceP^2$ is $\alpha+1$-allowable.

The case where $\alpha$ is limit is identical to the induction step $\alpha \rightarrow \alpha+1$, modulo the obvious notational changes.

\end{proof}

It follows that once we encounter a potential $f_m(x)$-value $y$ of rank $<\alpha$, in an $\alpha$-allowable iteration, that $(x,y)$ will remain in $A_m$ for the rest of the $\alpha$-allowable iteration and for all further $\alpha$-allowable extensions: 
\begin{lemma}\label{fvaluesremain2}
Let $(\forceP,I)$ be $\alpha$-allowable over $W$ with respect to $F$ of length $\eta < \omega_1$, let $G$ be $\forceP$-generic and let $(x,y,m,\xi) \in I$ for some $\xi < \alpha$. Then in $W[G]$, $(x,y) \in A_m$ and for every $\forceQ \triangleright \forceP$ such that $\forceQ$ is $\xi$ allowable it holds that $\forceQ \slash G \Vdash (x,y) \in A_m$.
\end{lemma}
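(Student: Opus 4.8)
The plan is to unwind the hypothesis ``$(x,y,m,\xi)\in I$ with $\xi<\alpha$'': the quadruple must have entered $I$ at some stage $\beta<\eta$ of the $\alpha$-allowable iteration $\forceP$ through case~(a) of the definition, with $\xi$ the minimal admissible rank at that stage and $y$ the chosen $<$-least witness $y_0$. Writing $\forceP_\beta$ for the iteration up to $\beta$ and $G_\beta:=G\upharpoonright\forceP_\beta$, the construction then literally records that $W[G_\beta]\models(x,y)\in A_m$ and---this is the clause I will exploit---that there is no $\xi$-allowable $\forceR\triangleright\forceP_\beta$ in $W$ with $W[G_\beta]\models\forceR/G_\beta\Vdash(x,y)\notin A_m$.

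Next I would invoke the shrinking lemma (Lemma~\ref{shrinkinglemma}): since $\xi<\alpha$, $\forceP$ is itself $\xi$-allowable, and, tracking the bookkeeping $F'$ manufactured in the shrinking argument, $\forceP_\beta$ is $\xi$-allowable and $\forceP\triangleright\forceP_\beta$; likewise any $\xi$-allowable $\forceQ\triangleright\forceP$ satisfies $\forceQ\triangleright\forceP_\beta$. I would then reduce the lemma to the claim that for every $\xi$-allowable $\forceR\triangleright\forceP_\beta$ in $W$ and every condition $p$ in the tail $\forceR/G_\beta$, the restriction of $\forceR/G_\beta$ below $p$ is, up to isomorphism below its weakest condition, the tail $\forceR'/G_\beta$ of a $\xi$-allowable $\forceR'\triangleright\forceP_\beta$ in $W$. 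Granting this: if some $p\in\forceR/G_\beta$ forced $(x,y)\notin A_m$ over $W[G_\beta]$, then---strengthening $p$ and using that ``$(x,y)\in A_m$'' is $\Pi^1_3$, say $\forall z\,\psi(x,y,z)$ with $\psi$ an absolute $\Sigma^1_2$ matrix, so that $p$ forces $\neg\psi(x,y,\dot{z})$ for a name $\dot{z}$---the forcing $\forceR'/G_\beta$ would force, via its weakest condition, $(x,y)\notin A_m$ over $W[G_\beta]$, contradicting the displayed clause. Hence no condition of $\forceR/G_\beta$ forces $(x,y)\notin A_m$, so its weakest condition forces $(x,y)\in A_m$. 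Taking $\forceR:=\forceP$ with the given generic $G$ yields $W[G]\models(x,y)\in A_m$; taking an arbitrary $\xi$-allowable $\forceQ\triangleright\forceP$ and noting that $\forceQ/G$ is a further quotient of the $\xi$-allowable tail $\forceQ/G_\beta$ (so a condition of $\forceQ/G$ forcing $(x,y)\notin A_m$ lifts to a condition of $\forceQ/G_\beta$ whose restriction is again such a tail) yields $\forceQ/G\Vdash(x,y)\in A_m$.

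The main obstacle is precisely this reduction step: that restricting an allowable forcing below a condition again behaves, for the purpose of forcing $\Sigma^1_3$ statements, like an allowable forcing. I would argue it from the structural features of Section~4 and the argument in Lemma~\ref{nounwantedcodes}. The tail $\forceR/G_\beta$ is a countable support iteration of $\operatorname{Code}(\cdot)=S_{-1}\ast\dot{\mathbb{A}}(\dot{Y})$ factors; a condition $p$ has countable support, and on a coordinate $\gamma$ in its support $p(\gamma)$ names a pair $(\dot{s}_\gamma,\dot{q}_\gamma)$ with $\dot{s}_\gamma$ a node of $S_{-1}$ and $\dot{q}_\gamma$ a condition of $\dot{\mathbb{A}}(\dot{Y}_\gamma)$. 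Restricting $\dot{\mathbb{A}}(\dot{Y}_\gamma)$ below $\dot{q}_\gamma$ is forcing-equivalent to $\dot{\mathbb{A}}(\dot{Y}_\gamma)$ and still codes $\dot{Y}_\gamma$ into a real; restricting $S_{-1}$ below $\dot{s}_\gamma$ gives $(S_{-1})_{\dot{s}_\gamma}$, and---after strengthening $p$ so that, as in Lemma~\ref{nounwantedcodes}, the $\dot{s}_\gamma$ lie on an antichain of $S_{-1}$ off all previously added branches---a cofinal branch through $(S_{-1})_{\dot{s}_\gamma}$ is a cofinal branch through $S_{-1}$, and the resulting coding areas still form the required almost disjoint family in $\omega_1$. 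Thus the restriction of $\forceR/G_\beta$ below $p$ has exactly the coding effects on $\vec{S}$ of a genuine $\xi$-allowable tail over $\forceP_\beta$, and since the allowability rules depend only on the list of (names of) coded reals together with the global well-order of $L$---not on the ambient model---one can prescribe a bookkeeping $F'$ (extending the one witnessing $\forceP_\beta$'s $\xi$-allowability) that reproduces exactly those codings, yielding $\forceR'$. I expect the delicate points to be the bookkeeping carpentry in this last move and checking that cases (a), (b), (c) fire identically for $\forceR'$.
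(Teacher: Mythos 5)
Your proposal follows the same route as the paper's own proof: identify the stage $\beta$ at which $(x,y,m,\xi)$ entered $I$ via case (a), invoke the case (a) clause asserting that no $\xi$-allowable extension of $\forceP_{\beta}$ can force $(x,y)\notin A_m$, and use Lemma~\ref{shrinkinglemma} to see that the tail $\forceP/G_{\beta}$ and any further $\xi$-allowable $\forceQ\triangleright\forceP$ are exactly such extensions. The long excursion about cones below conditions being (isomorphic to) allowable tails is extra care the paper does not take --- its proof applies the case (a) clause to the tail directly --- and for that step the literal isomorphism you claim is both unavailable and unnecessary: it suffices that any generic for the cone below $p$ is a generic for the full tail containing $p$, combined with the upward absoluteness of the $\Sigma^1_3$ statement $(x,y)\notin A_m$, so your argument is sound once that overstated claim is weakened.
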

\begin{proof}
Let $\beta$ be the least stage in $\forceP$ such that $(x,y,m,\xi)$ is added to $I_{\beta}$. Then, as $\xi < \alpha$, we must be in case (a) at stage $\beta$.
This means that we found a pair $(x,y)$ which can not be kicked out of $A_m$ with a further $\xi$-allowable forcing, for a $\xi < \alpha$ as in the lemma. The tail however is an $\alpha$-allowable forcing over $W[G_{\beta}]$, hence also $\xi$-allowable and thus $(x,y) \in A_m$ throughout the tail of the iteration.

Adding an additional $\forceQ$ which is $\xi$-allowable to the tail of $\forceP$ does not alter the argument, which proves the second assertion of the lemma.
\end{proof}

\begin{lemma}
Work in $W$. For any $\alpha$, the set of $\alpha$-allowable forcings is nonempty.
\end{lemma}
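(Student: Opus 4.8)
The plan is to induct on $\alpha$, establishing the slightly stronger statement that for every ordinal $\alpha$ and every bookkeeping function $F \in W$ with $F : \delta \to W^3$, $\delta < \omega_1$, running the $\alpha$-allowability rules with $F$ never breaks down and returns a well-defined pair $(\forceP, I)$; since every such $\forceP$ is by definition $\alpha$-allowable with respect to $F$, the lemma follows. One can already cheaply observe that the empty bookkeeping function $F : 0 \to W^3$ returns the trivial forcing and $I = \emptyset$, which is therefore $\alpha$-allowable for every $\alpha$; so the genuine content of the induction is that the recursive definition is \emph{coherent} — i.e.\ that ``$\zeta$-allowable'' really is a well-defined notion for every $\zeta$ — rather than mere nonemptiness.

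For $\alpha = 0$ I would simply note that the $0$-allowability rules are visibly deterministic and total: at each stage $\beta < \delta$, after replacing $F(\beta)$ by the $<$-least name of the required form if necessary, either $F(\beta)$ is a triple $(\dot x, \dot y, \dot m)$ and $\forceP(\beta) := \operatorname{Code}(x,y,m)$, or $F(\beta)$ is a pair $(\dot z_0, \dot z_1)$ and the global well-order of $L$ applied to the $<$-least names $\sigma_0, \sigma_1$ decides whether $\forceP(\beta) := \operatorname{Code}(z_0,z_1)$ or $\operatorname{Code}(z_1,z_0)$. Each factor $\operatorname{Code}(x,y,m) = S_{-1} \ast \dot{\mathbb{A}}_D(\dot Y)$ is an $R$-proper forcing over $W$ which preserves $\omega_1$ and $\CH$, so the countable support iteration $(\forceP_\beta : \beta < \delta)$ of length $\delta < \omega_1$ is again $R$-proper and preserves $\CH$ by the Fact on countable support iterations of $R$-proper forcings and the properties of allowable forcings established above; in particular it is a bona fide forcing in $W$. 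This produces a $0$-allowable forcing for any $F$.

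For the successor step I would assume the claim for all $\zeta \le \alpha$. The key consequence is that, for each $\zeta \le \alpha$ and each $\forceP_\beta \in W$, the $\zeta$-allowable extensions $\forceR \triangleright \forceP_\beta$ form a \emph{set} in $W$ — each is determined by a bookkeeping function in $W^{<\omega_1}$ — and the notion ``$\forceR$ is $\zeta$-allowable'' is well-defined. Now fix $F : \delta \to W^3$ and run the $(\alpha+1)$-allowability rules. At stage $\beta$, with $\forceP_\beta$ and $I_\beta$ already built: if $F(\beta)$ is a pair of names of reals, case (c) applies and is resolved exactly as in $0$-allowability; otherwise I must decide case (a) versus case (b). Writing $F(\beta)_0 = (\dot x, \dot y, \dot m)$ and $A \subseteq \beta$ for the relevant index set, case (a) asks whether there is a $<$-least nice $\forceP_A$-name $\dot y_0$ with $W[G_\beta] \models (x,y_0) \in A_m$ such that no $\zeta$-allowable $\forceR \triangleright \forceP_\beta$ (for some $\zeta \le \alpha$) forces $(x,y_0) \notin A_m$ over $G_\beta$. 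Since the $\forceR$ range over a set in $W$ and membership in $A_m$ is given by the fixed $\Sigma^1_3$-formula $\Phi$, the statement ``case (a) applies at $\beta$'', together with the least witnessing rank $\zeta$ and the $<$-least witness $\dot y_0$, is — after being read off through $\Vdash_{\forceP_\beta}$ — a definable property in $W$; hence the rules remain deterministic. In case (a) I set $\forceP(\beta) := \operatorname{Code}(x,z,m)$ with $z \neq y_0$ supplied by $F(\beta)_2$ (or by the $<$-least name of the right shape) and put $I_{\beta+1} := I_\beta \cup \{(x,y_0,m,\zeta)\}$; in case (b) I set $\forceP(\beta) := \operatorname{Code}(x,y_0,m)$ as dictated by $F(\beta)_1, F(\beta)_2$ (or their $<$-least substitutes) and keep $I_{\beta+1} := I_\beta$. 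At limit stages I take the countable support inverse limit and set $I_\eta := \bigcup_{\xi < \eta} I_\xi$. Every factor is a $\operatorname{Code}$ forcing, so the iteration is again $R$-proper and preserves $\CH$, and since $\delta < \omega_1$ the recursion terminates after $\delta$ steps with a well-defined $(\forceP, I)$; thus $\forceP$ is $(\alpha+1)$-allowable with respect to $F$. The limit-ordinal case of $\alpha$ is verbatim the same, with case (a) quantifying over $\zeta < \alpha$.

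The one step that I expect to require real care, rather than routine bookkeeping, is the verification that ``case (a) applies at stage $\beta$'' is a genuinely well-defined, $W$-definable statement, so that $\forceP(\beta)$ can honestly be taken to be a $\forceP_\beta$-name: this is exactly where the induction hypothesis is used, since it guarantees that the $\zeta$-allowable extensions of $\forceP_\beta$ (for $\zeta < \alpha+1$) form a set, allowing the quantifier over them to be absorbed into a single name, and it is also where one uses that membership in $A_m$ is captured by the fixed projective formula $\Phi$. Granted this, the $\alpha$-allowability rules are total and deterministic for every input $F$, so they always return an $\alpha$-allowable forcing, and the set of $\alpha$-allowable forcings is nonempty.
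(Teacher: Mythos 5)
Your proposal is correct and follows essentially the same route as the paper: an induction on $\alpha$ whose content is that the $(\alpha+1)$-allowability rules, being total and deterministic given that $\zeta$-allowability is already well-defined for $\zeta\le\alpha$, turn any bookkeeping function $F\in W$ into a well-defined $(\alpha+1)$-allowable forcing. The paper states this in three lines; your elaboration of why the case distinction (a)/(b)/(c) is decidable in $W$ is a reasonable filling-in of the same argument rather than a different one.
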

\begin{proof}
By induction on $\alpha$. If there are $\alpha$-allowable forcings over $W$, then every bookkeeping function $F \in W$, $F: \delta \rightarrow W^3$  together with the rules (a) and (b) will create a nontrivial $\alpha+1$-allowable forcing just by the way we chose to define $\alpha+1$-allowability. For limit ordinals, the same reasoning applies.
\end{proof}

As a direct consequence we obtain that there must be an ordinal $\alpha$ such that for every $\beta> \alpha$, the set of $\alpha$-allowable forcings over $W$  must equal the set of $\beta$-allowable forcings over $W$. Indeed every allowable forcing is an $\aleph_1$-sized partial order in $W$, thus there are only set-many of them (modulo isomorphism), and the decreasing sequence of $\alpha$-allowable forcings must eventually stabilize at a set which also must be non-empty.

\begin{definition}
Let $\alpha_0$ be the least ordinal such that for every $\beta> \alpha_0$, the set of $\alpha_0$-allowable forcings over $W$ is equal to the set of $\beta$-allowable forcings over $W$. 
\end{definition}
The set of $\infty$-allowable forcings can also be described in the following way. A $\delta < \omega_1$-length iteration $\forceP= (\forceP_{\alpha} \, : \, \alpha< \delta)$ is $\infty$-allowable if it is recursively constructed following a bookkeeping function $F$ and a modified version of the two rules from above: we ask in (a) whether there exists an ordinal $\zeta$ at all for which the antecedens of (a) is true. If there is such an ordinal $\zeta$ we proceed as described in (a) if not we use (b). 
Note that for  $m\in \omega$ and a real $x$ we will have several potential $y$'s such that $(x,y,m,\xi) \in I$ as we go along in an $\infty$-allowable iteration. The ranks of the potential values form a decreasing sequence of ordinals, thus, once we set a value $f(m,x)$, we can be sure that eventually there will be a value for $f(m,x)$ which will not change any more in rank. 

\section{Definition of the universe in which the ${\Pi^1_3}$ uniformization property holds}
The notion of $\infty$-allowable will be used now to define the universe in which the ${\Pi^1_3}$-uniformization property is true and whose reals admit a $\Delta^1_3$-definable well-order. We let $W$ be our ground model and start an $\omega_1$-length iteration whose initial segments are all $\infty$-allowable with respect to $W$. We are using the following rules in combination with some bookkeeping $F \in W$. 
The actual properties of $F$ are not really relevant, $F$ should however satisfy that \begin{itemize}
\item $F: \omega_1 \rightarrow H(\omega_1)$ is surjective.
\item For every $x \in H(\omega_1)$, the set $F^{-1} (x)$ should be unbounded in $\omega_1$.
\end{itemize}

Inductively we assume that we are at stage $\beta < \omega_1$ of our iteration and the allowable forcings $\forceP_{\beta}$, $\forceR_{\beta}$ have been defined already. We assume additionally that the value $F(\beta)=(F(\beta)_0, F(\beta)_1)$ is in fact a pair of elements in $H(\omega_1)$ and $F(\beta)_0=(\eta_1, \eta_2, m)$ where $\eta_1 \le \beta$ and $\eta_2$ are ordinals and $m \in \omega$.
We let $(\dot{x}, \dot{y})$ be the $\eta_2$-th nice $\forceP_{\eta_1}$ name of a pair of reals relative to our wellorder $<$.
If we set  $\dot{x}^{G_{\beta}}=x$, $\dot{y}^{G_{\beta}}=y$ then we further assume that $W[G_{\beta}] \models (x,y) \in A_m$. Recall that $\alpha_0$ is the least ordinal such that the notion of $\alpha$-allowable stabilizes. We split into cases, following the definition of $\alpha_0$-allowable.

\begin{enumerate}
\item[(1)] We work in $W[G_{\eta_1}]$ and assume the following.
\begin{itemize}
\item There is an ordinal $\zeta \le \alpha_0$, which is chosen to be minimal for which
\item there is a $<$-least pair of nice $\forceP_{\eta_1}$-names $(\dot{x}',\dot{y}')$ such that $\dot{x}'^{G_{\eta_1}}=x$ and $\dot{y}'^{G_{\eta_1}}=y_0$ and $W[G_{\beta}] \models (x,y_0) \in A_m$ and
for every $\zeta$-allowable $\forceR \in W[G_{\beta}]$, 
\[ \forceR \Vdash (x,y_0) \in A_m. \]

\end{itemize}
If this is the case, then we set the following:
\begin{itemize}
\item We pick the $<$-least triple of $\forceP_{\eta_1}$-names $(\dot{x}, \dot{z}, m)$ such that if $\dot{x}^{G_{\eta_1}} =x$, $\dot{z}^{G_{\eta_1}} =z \ne y_0$ and such that $(x,z,m)$ has not been coded yet into $\vec{S}$ by $\forceP_{\beta}$. We let $\forceP(\beta):=\operatorname{Code} {(x,z,m)}$.  We also let $\forceP_{\beta+1} = \forceP_{\beta} \ast \forceP(\beta)$ and let $G_{\beta+1}$ be its generic filter.

\item  We set a new $f$ value, i.e. we set $f(m,x):=y'$ and assign the rank $\xi$ to the value. We update $I_{\beta+1}^{G'_{\beta+1}}:= I_{\beta}^{G'_{\beta}} \cup \{ (x,y',m,\xi)\}$.

\end{itemize}

\item [(2)]
We assume that case (a) is not true. 
So for every $\xi \le \alpha_0$, in particular for $\xi=\alpha_0$, every pair $(x,z) \in W[G_{\eta_1}]$ (so in particular for the pair $(x,y)$) 
there is a further $\alpha_0$-allowable $(\forceP^{(x,y)} \triangleright \forceP_{\beta}$, such that 
\begin{align*}
\forceP^{x,y} \slash G_{\beta} \Vdash (x,y) \notin A_m
\end{align*}

We pick the $<$-least such $\alpha_0$-allowable forcing $\forceP^{(x,y)}$ and use the tail $\forceP^{x,y} \slash G_{\beta}$ at stage $\beta$, and a generic filter $G^{x,y} \triangleright G_{\beta}$ for $\forceP^{x,y} \slash G_{\beta}$ over $W$ and set $G_{\beta+1}:= G^{x,y}$.

Then we update $I_{\beta+1}$ to be $I_{\beta} \cup I^{x,y}$. 

\item[(3)] If on the other hand $F(\beta)= (\dot{\eta_1},\dot{\eta_2})$ for two $\forceP_{\beta}$-names of countable ordinals $\dot{\eta_1}$ and $\dot{\eta_2}$ $\eta_i=\dot{\eta}_i^{G_{\beta}}$, and if we assume that $(z_0,z_1)$ are the 
$\eta_2$-th of a pair of  reals in $W[G_{\eta_1}]$, where $G_{\eta_1}$ is $G_{\beta}$ restricted to $\forceP_{\eta_1}$, then we let $z_i = \dot{z}_i^{G_{\beta}}$ and we either force with 
 \begin{itemize}
 \item[] $\forceP(\beta):= \operatorname{Code} (z_0,z_1)= S_{-1} \ast \dot{\mathbb{A}} (\dot{Y} )$, or with 
 \item[] $\forceP(\beta):= \operatorname{Code} (z_1,z_0)= S_{-1} \ast \dot{\mathbb{A}} (\dot{Y} )$.
 \end{itemize}
 according to whether $\sigma_0 < \sigma_1$ or $\sigma_1 < \sigma_0$, where $\sigma_0$ and $\sigma_1$ are as in the discussion above.

\end{enumerate}
As always we use countable support. This ends the definition of our iteration $((\forceP_{\beta}, I_{\beta}) \, : \, \beta < \omega_1)$. We set $\forceP_{\omega_1}$ to be the direct limit of $(\forceP_{\beta} \, : \, \beta < \omega_1)$, and $I_{\omega_1}=\bigcup_{\beta < \omega_1} I_{\beta}$.
\par \medskip
We next derive its basic properties. First we note that the iteration is such that there is an $F' \in W$, $F': \omega_1 \rightarrow W$, and such that for every $\delta < \omega_1$,  $((\forceP_{\beta}, I_{\beta}) \, : \, \beta < \delta)$ is $\alpha_0$-allowable over $W$ with respect to $F' \upharpoonright \delta$. Indeed the bookkeeping $F$ can be used to readily derive such an $F' \in W.$

\begin{fact}
The just defined iteration $(\forceP_{\omega_1},I_{\omega_1}) \in W$ is such that every initial segment is $\alpha_0$-allowable over $W$ relative to a fixed $F' \in W$.
\end{fact}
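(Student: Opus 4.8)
The plan is to produce $F'$ by a recursion on $\beta<\omega_1$ that runs alongside the iteration $(\forceP_\beta,I_\beta)$ of the present section, building simultaneously a strictly increasing continuous sequence $(\delta_\beta\mid\beta\le\omega_1)$ of countable ordinals with $\delta_0=0$, and maintaining the invariant: $F'\upharpoonright\delta_\beta$ is a legitimate bookkeeping for the $\alpha_0$-allowability rules over $W$ and running those rules with it produces exactly $(\forceP_\beta,I_\beta)$. The presentation is genuinely recursive, since $F'\upharpoonright\delta_\beta$ is precisely the bookkeeping with respect to which the phrase ``$\forceP^{x,y}\triangleright\forceP_\beta$'' occurring in clause (2) is to be read. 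The first observation is that, because $\alpha_0$ is the stabilization ordinal, ``$\zeta$-allowable'' for $\zeta\le\alpha_0$ and ``$\alpha_0$-allowable'' denote the same class of forcings, so clause (1) with its minimal $\zeta\le\alpha_0$ is, up to the harmless reparametrization of which names the bookkeeping points to, literally an instance of case (a) of $\alpha_0$-allowability, and clause (3) is literally an instance of case (c). The only genuine discrepancy is clause (2): instead of coding a single real, as case (b) does, the construction inserts at a single ``stage $\beta$'' the whole tail $\forceP^{x,y}/G_\beta$ of an $\alpha_0$-allowable forcing $\forceP^{x,y}\triangleright\forceP_\beta$.

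To absorb clause (2) one unfolds. Since $\forceP^{x,y}$ is $\alpha_0$-allowable over $W$ it carries a bookkeeping $H\in W$ with $H\upharpoonright\operatorname{len}(\forceP_\beta)=F'\upharpoonright\delta_\beta$ (that is exactly the meaning of $\forceP^{x,y}\triangleright\forceP_\beta$ together with the invariant), and by the recursive nature of the definition of $\alpha_0$-allowability the portion of $\forceP^{x,y}$ past $\forceP_\beta$ is generated, coordinate by coordinate, purely by the rules (a), (b), (c) applied with $H$. Hence one sets $\delta_{\beta+1}:=\delta_\beta+\big(\operatorname{len}(\forceP^{x,y})-\operatorname{len}(\forceP_\beta)\big)$, which is again countable because every allowable forcing has length $<\omega_1$, and lets $F'$ on $[\delta_\beta,\delta_{\beta+1})$ be the corresponding translate of $H$. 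In clauses (1) and (3) one simply puts $\delta_{\beta+1}:=\delta_\beta+1$ and lets $F'(\delta_\beta)$ code the same $\operatorname{Code}$-forcing together with (in clause (1)) the ordinal/name data needed for the $f$-value update, exactly as in the proof of Lemma \ref{shrinkinglemma}; the uniform dependence of these prescriptions on the generic $G_\beta$ is eliminated in the usual way by passing to $\forceP_\beta$-names, so that $F'\in W$. At a limit $\beta$ one sets $\delta_\beta:=\sup_{\beta'<\beta}\delta_{\beta'}$, which is $<\omega_1$ as a countable supremum of countable ordinals, and the countable-support limit of the $F'$-iteration below $\delta_\beta$ is the countable-support limit $\forceP_\beta$, while $I^{F'}_{\delta_\beta}=\bigcup_{\beta'<\beta}I_{\beta'}=I_\beta$, so the invariant passes through. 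Since $\delta_\beta\ge\beta$ for all $\beta$ one gets $\delta_{\omega_1}=\omega_1$, so $F'\in W$ is a bookkeeping of length $\omega_1$ with $\forceP_{\omega_1}=\forceP^{F'}_{\omega_1}$, and for each $\gamma<\omega_1$ the forcing $\forceP^{F'}_{\gamma}$ — in particular each $\forceP_\beta=\forceP^{F'}_{\delta_\beta}$ — is $\alpha_0$-allowable over $W$ relative to $F'\upharpoonright\gamma$.

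The one point requiring care is the verification of the invariant across an unfolded clause-(2) block: one must check that while running the $\alpha_0$-allowability rules with the translate of $H$ over the already-matched state $(\forceP^{F'}_{\delta_\beta+\iota},I^{F'}_{\delta_\beta+\iota})$ the rules never diverge from the steps $\forceP^{x,y}$ prescribes for itself. This follows because those rules are deterministic once the bookkeeping and the current pair $(\forceP_\gamma,I_\gamma)$ are fixed — which is precisely the content of the invariant — and because $\forceP^{x,y}$ was chosen so that $\forceP^{x,y}/G_\beta\Vdash(x,y)\notin A_m$, so no sub-step inside the block ever pins $(x,y)$ itself as an $f$-value; the only $f$-value updates inside the block are exactly those dictated by $H$, keeping $I^{F'}$ in lockstep with the $I$-sequence of $\forceP^{x,y}$. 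All remaining ingredients — closure of the $\alpha_0$-allowable forcings under the relevant operations, countability of lengths, behaviour at limits — have been established above. Note finally that the surjectivity and unboundedness demanded of the bookkeeping $F$ play no role here; they are needed only later, when one verifies that $W[\forceP_{\omega_1}]$ actually satisfies $\Pi^1_3$-uniformization and carries a $\Delta^1_3$-definable well-order.
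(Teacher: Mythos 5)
Your proposal is correct and is essentially the (much more detailed) version of what the paper merely asserts in one line, namely that the bookkeeping $F$ can be massaged into an $F'\in W$ witnessing $\alpha_0$-allowability of every initial segment; the key moves — matching clauses (1) and (3) to cases (a) and (c), and unfolding the clause-(2) tails via the bookkeeping witnessing $\forceQ\triangleright\forceP_\beta$ — are exactly what is intended. One small imprecision: it is not true that ``$\zeta$-allowable'' for all $\zeta\le\alpha_0$ coincides with ``$\alpha_0$-allowable'' (the classes are only decreasing in $\zeta$ and stabilize \emph{from} $\alpha_0$ onward), but your identification of clause (1) with case (a) survives, since clause (1) with minimal $\zeta\le\alpha_0$ is verbatim case (a) of $(\alpha_0+1)$-allowability and the $(\alpha_0+1)$- and $\alpha_0$-allowable forcings coincide by the choice of $\alpha_0$.
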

As a consequence, the $f_m$-values of rank $< \alpha_0$ we define as we go along the iteration are such that they will certainly belong to $A_m$ in the final model by Lemma \ref{fvaluesremain2}. We let $G_{\omega_1}$ be $\forceP_{\omega_1}$-generic over $W$. What is left, is to show that in $W[G_{\omega_1}]$, for every $m \in \omega$ and every real $x$ such that $A_{m,x} \ne \emptyset$, we do have exactly one pair of reals $(x,y) \in A_m $ such that $(x,y,m)$ is not coded into $\vec{S}$.
The next lemma does exactly that, and is the main step in proving that the $\Pi^1_3$-uniformization property holds true in $W[G_{\omega_1}]$.

\begin{lemma}
In $W[G_{\omega_1}]$ the following dichotomy holds true:
\begin{enumerate}
\item Either $(x,m)$ is such that there is a real $y$ and $\xi <  \alpha_0$ such that $(x,y,m,\xi)\in I$. Then there is a unique real $y_0$ such that
\[W[G_{\omega_1}] \models`` (x,y_0) \in A_m \land (x,y_0,m) \text{ is not coded somewhere into } \vec{S}". \]

\item Or $(x,m)$ is such that for every real $y$, if $\xi< \alpha_0$ then $(x,y,m,\xi) \notin I$, in which case 
\[W[G_{\omega_1}] \models `` \text{The $x$-section of $A_m$ is empty}" \]
\end{enumerate}
\end{lemma}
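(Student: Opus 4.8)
The plan is to verify the dichotomy by tracking, for a fixed pair $(x,m)$, what the $\alpha_0$-allowable rules do at the cofinally many stages $\beta < \omega_1$ where $F(\beta)$ points at $(x,y)$ for various $y$, using the fact (established in the preceding Fact) that every initial segment of the iteration is $\alpha_0$-allowable with respect to a fixed $F' \in W$. First I would handle case (2): suppose that for every real $y$ and every $\xi < \alpha_0$ we have $(x,y,m,\xi) \notin I$. I claim the $x$-section of $A_m$ is empty in $W[G_{\omega_1}]$. If not, pick $y$ with $(x,y) \in A_m$ in the final model; since $\Sigma^1_3$ statements are upward absolute, $(x,y) \in A_m$ already holds from some stage $\eta_1$ on, and $(x,y)$ has a nice $\forceP_{\eta_1}$-name, so by surjectivity and unboundedness of $F$ there is a stage $\beta$ past $\eta_1$ where $F(\beta)_0$ points exactly at this name. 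At that stage, since case (1) never fired for $(x,\cdot,m)$ with rank $<\alpha_0$ (that is the standing hypothesis), we are in case (2), which means $(x,y)$ — and indeed every such pair — can be forced out of $A_m$ by a further $\alpha_0$-allowable extension, and the construction actually does so. Because $\alpha_0$ is the stabilization ordinal, "forceable out by an $\alpha_0$-allowable forcing" is a property that persists: any later $\alpha_0$-allowable tail is again $\alpha_0$-allowable, so $(x,y)$ stays outside $A_m$ for the rest of the iteration. Running this over all $y$ (there are only $\aleph_1$-many names, each hit cofinally often) and using that the iteration preserves $\aleph_1$ and adds no reals not named by some initial segment, we conclude the $x$-section is empty in $W[G_{\omega_1}]$.

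For case (1), suppose there is a real $y$ and $\xi < \alpha_0$ with $(x,y,m,\xi) \in I$. Let $\beta_0$ be the least stage at which some quadruple $(x,y',m,\xi')$ with $\xi' < \alpha_0$ enters $I$; we are then in case (1) at $\beta_0$, and the rule sets $f(m,x) := y'$ with rank $\xi'$ and forces $\operatorname{Code}(x,z,m)$ for some $z \neq y'$, so $(x,y',m)$ is deliberately \emph{not} coded into $\vec S$ at this stage. By Lemma \ref{fvaluesremain2}, since the tail of the iteration past $\beta_0$ is $\alpha_0$-allowable hence $\xi'$-allowable, $(x,y') \in A_m$ holds throughout and in $W[G_{\omega_1}]$. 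This gives existence of the $y_0 := y'$ in the statement; I would additionally note that the \emph{rank} can only decrease strictly each time case (1) re-fires for $(x,\cdot,m)$, so after finitely... rather, after at most $\xi'+1$-many such firings the chosen value stabilizes, and the final stabilized value is the unique $y_0$ with $(x,y_0)\in A_m$ never coded into $\vec S$.

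Uniqueness is the heart of the argument and, I expect, the main obstacle. I would argue it in two directions. First, any real $y_1 \neq y_0$ with $(x,y_1) \in A_m$ in $W[G_{\omega_1}]$: by upward absoluteness it is in $A_m$ from some stage on, its name appears, and $F$ points at it cofinally often, so at some stage $\beta > \beta_0$ we reach a case-(1) or case-(2) instance for $(x,y_1,m)$. In case (1) the code-forcing $\operatorname{Code}(x,z,m)$ with $z\ne y_0$ is used — but we must check this does not accidentally write $(x,y_1,m)$ into $\vec S$; here one invokes Lemma \ref{nounwantedcodes} together with the almost-disjointness of coding areas to see that the only reals coded into $\vec S$ are exactly those handed to $\operatorname{Code}$, and the rule never passes $(x,y_1,m)$ with $y_1$ the current $f$-value. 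Actually the cleaner point: at a case-(1) stage the rule explicitly picks $z \neq y_0$ (the current stabilized value) and codes $(x,z,m)$, so for \emph{every} $y_1$ that is ever a candidate $\neq y_0$, a stage arrives where $(x,y_1,m)$ \emph{is} coded into $\vec S$ — either directly as the chosen $z$, or because the bookkeeping is surjective so every pair gets processed and any non-stabilized candidate eventually gets coded. Hence $y_0$ is the unique uncoded witness. Second, one must rule out that \emph{no} uncoded witness survives, i.e. that $(x,y_0,m)$ itself gets coded later by some other stage: but Lemma \ref{nounwantedcodes} applied to $\forceP_{\omega_1}$ says the set of reals satisfying the coding formula $\Phi$ in $W[G_{\omega_1}]$ is exactly the set of reals explicitly coded by $F'$, and the construction never feeds $(x,y_0,m)$ to $\operatorname{Code}$ once $y_0$ is the stabilized value — so $(x,y_0,m)$ is not coded into $\vec S$, completing uniqueness. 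The delicate bookkeeping point to get right is that "$y_0$ is the stabilized value" is decided by a tail of the iteration and the rule at later case-(1) stages always refers to this stabilized value; making this precise — that the sequence of $f(m,x)$-ranks is eventually constant and the last value is honored forever after — is where I would spend the most care.
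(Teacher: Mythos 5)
Your overall strategy coincides with the paper's: case (2) is handled exactly as in the paper (once a case-(2) stage forces the pair out of $A_m$, the $\Sigma^1_3$-statement ``$(x,y)\notin A_m$'' is upward absolute and persists, and the surjectivity/unboundedness of the bookkeeping guarantees every candidate pair is eventually processed), and for case (1) you correctly invoke Lemma \ref{fvaluesremain2} to keep $(x,y_0)$ inside $A_m$. The gap is in how you pin down $y_0$ and prove that $(x,y_0,m)$ is never coded. You define $y_0$ as ``the stabilized value'' and justify stabilization by noting that the ranks form a decreasing sequence of ordinals. But decreasing ranks only guarantee that the \emph{rank} eventually stabilizes, not the \emph{value}; and, more importantly, at every case-(1) stage \emph{before} your stabilization point the rule codes some $(x,z,m)$ with $z$ different from the \emph{then-current} candidate $y'$ --- nothing in your argument prevents that $z$ from being the eventual $y_0$. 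Your appeal to Lemma \ref{nounwantedcodes} does not help here: that lemma says the coded reals are exactly those handed to $\operatorname{Code}$, whereas the issue is precisely whether $(x,y_0,m)$ was handed to $\operatorname{Code}$ at an early stage.

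The paper closes this by choosing $y_0$ differently: it is the value whose name is $<$-minimal among all $(x,y,m,\xi)\in I$ with $\xi<\alpha_0$ (recall the rule minimizes rank first and then the name). If $(x,y_0,m)$ had been coded at some stage $\eta$ prior to the stage $\beta$ at which $(x,y_0,m,\xi_0)$ enters $I$, then $\eta$ must already be a case-(1) stage (the pair $(x,y_0)$ itself witnesses that case (1) applies there), and the value $y'$ selected at $\eta$ would have a $<$-smaller name than $y_0$, contradicting minimality. Replacing your stabilization argument by this minimal-name choice repairs the proof; the remainder of your uniqueness argument (every $y_1\neq y_0$ is eventually fed to $\operatorname{Code}$ because the bookkeeping hits every name cofinally often) then goes through as in the paper.
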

\begin{proof}
We assume first that the assumptions of case 1 are true, i.e. there is a $y$ and $\xi < \alpha_0$ such that $(x,y,m,\xi) \in I$. Then there is a real $y_0 \in W[G_{\omega_1}]$ (and an attached ordinal $\xi_0 < \alpha_0$) whose
$\forceP_{\omega_1}$-name is $<$-minimal among all such names.
We let $\beta$ be the least stage where we add $(x,y_0,m,\xi_0)$ to $I_{\beta}$.
\par \medskip

\begin{claim}
$W[G_{\omega_1}] \models (x,y_0) \in A_m$.
\end{claim}
\begin{proof}[Proof of the first Claim]
This follows immediately from the lemma \ref{fvaluesremain2}.
\end{proof}

\par \medskip

\begin{claim}
\begin{align*}
 W[G_{\omega_1}] \models``y_0 & \text{ is the unique real such that }  \\& (x,y_0,m) \text{ is not coded somewhere in the $\vec{S}$-sequence.$"$ }
\end{align*}
\end{claim}
\begin{proof}[Proof of the second Claim]
We shall prove the second claim. First we show that $(x,y_0,m)$ is not coded somewhere into the $\vec{S}$-sequence. It is clear that from stage $\beta$ on, we will not code $(x,y_0,m)$ into $\vec{S}$. So the only possibility that we coded up $(x,y_0,m)$ is that there is a stage $\eta < \beta$ of our iteration $\forceP_{\omega_1}$ where we coded $(x,y_0,m)$ into $\vec{S}$. 
At stage $\eta$, we can not be in case 2, as $(x,y_0)$ and the fact that we are in case 1 at stage $\beta$, witness that we must be in case 1 at $\eta$.
So we must be in case 1, but we add a different $(x,y',m,\xi_0)$ to $I_{\eta}$, but its $<$-least name must be $<$-less than the $<$-least name for $(x,y_0,m,)$ which is a contradiction to our assumption.

In order to see that it is the unique real of the form $(x,y,m)$ which is not coded, it is sufficient to note that for every other $y \ne y_0$, $(x,y,m)$ will be coded into $\vec{S}$ by the rule (1) of our definition.

Thus Claim 2 is proved, which also finishes the proof of the Lemma under the assumptions of the first case of our Lemma.
\end{proof}

\medskip
We shall prove now that under the assumptions of the second case of our Lemma, its conclusion does hold, i.e. we need to show that if $(x,m)$ is such that for every real $y$, if $\xi < \alpha_0$ then  $(x,y,m,\xi) \notin I$, then
$W[G_{\omega_1}] \models `` \text{The $x$-section of $A_m$ is empty}".$

But under these assumptions, whenever we are at a stage $\beta$ such that there is a $y$ such that $F(\beta)=(x,y,m)$, then case 2 of the definition of $\forceP_{\omega_1}$ must apply. But for every such $y$, at stage $\beta$, we ensure with an $\alpha_0$-allowable forcing that $W[G_{\beta+1}] \models (x,y) \notin A_m$. By upwards absoluteness of $\Sigma^1_3$-formulas  we obtain  in the end \[W
[G_{\omega_1}] \models \lnot \exists y ( (x,y) \in A_m). \]
This finishes the poof of the Lemma.

\end{proof}
\begin{corollary}
In $W[G_{\omega_1}]$ the $\Pi^1_3$-uniformization property is true.
For $A_m$ an arbitrary $\Pi^1_3$-set, we get that
\begin{itemize}
\item[] $y=f(m,x)$ 
\item[]\qquad if and only if
\item[] $(x,y) \in A_m$ and $\lnot \exists r ( \forall M (M$ is countable and transitive and $ M \models \ZFP+`` \text{$\aleph_2$ exists}" $  and  $\omega_1^M=(\omega_1^L)^M $ and $ r, (x,y,m) \in M  \rightarrow M \models \varphi((x,y,m)) ).$
\end{itemize}

\end{corollary}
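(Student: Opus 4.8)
The plan is to verify that the relation displayed in the statement, namely
$R(m,x,y) :\equiv (x,y)\in A_m \wedge \lnot\Phi((x,y,m))$,
where $\Phi$ is the $\Sigma^1_3$-formula fixed before Lemma~\ref{nounwantedcodes} asserting that $(x,y,m)$ has been coded into $\vec S$ (so that the clause $\lnot\exists r\,(\forall M(\dots))$ in the statement is literally $\lnot\Phi((x,y,m))$), is the graph of a total uniformizing function for $A_m$, and that it is $\Pi^1_3$ uniformly in $m$.

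For the first point I would invoke the dichotomy Lemma just proved. It gives, for every $m\in\omega$ and every real $x$ in $W[G_{\omega_1}]$, that either there are $y$ and $\xi<\alpha_0$ with $(x,y,m,\xi)\in I$, in which case there is a \emph{unique} $y_0$ with $(x,y_0)\in A_m$ and $(x,y_0,m)$ not coded into $\vec S$, and moreover (as extracted from the proof of that Lemma) $(x,y,m)$ \emph{is} coded for every $y\neq y_0$; or no such $\xi$ exists, in which case $A_{m,x}=\emptyset$. Contraposing, if $x\in\operatorname{pr}_1(A_m)$ we must be in the first alternative, so $R(m,x,\cdot)=\{y_0\}$; and if $A_{m,x}=\emptyset$ the first conjunct of $R$ fails for every $y$. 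Hence $f(m,\cdot):=\{(x,y):R(m,x,y)\}$ is a function with domain $\operatorname{pr}_1(A_m)$ and graph contained in $A_m$, i.e.\ a uniformization of $A_m$.

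The second point rests on the identification ``$\Phi((x,y,m))$ holds in $W[G_{\omega_1}]$ if and only if $(x,y,m)$ is coded into $\vec S$ by $\forceP_{\omega_1}$''. The direction $\Leftarrow$ is immediate: if $\operatorname{Code}(x,y,m)$ is a factor of $\forceP_{\omega_1}$ it adds a real $r$ satisfying the $\Pi^1_2$-statement $(\ast\ast\ast)$, hence $\Phi((x,y,m))$, and this persists by upward absoluteness of $\Sigma^1_3$-formulas. The direction $\Rightarrow$ is exactly Lemma~\ref{nounwantedcodes} applied to the $\omega_1$-length iteration: every initial segment of $\forceP_{\omega_1}$ is $\alpha_0$-allowable and hence $0$-allowable by Lemma~\ref{shrinkinglemma}, so the coding areas still form an almost disjoint family of subsets of $\omega_1$, and the tree-pulling argument behind Lemma~\ref{PullingOutATree} shows that a real $r$ witnessing $\Phi((x,y,m))$ for an uncoded triple would produce an $\omega_1$-branch in $L[r]$ through some $S_{\omega\alpha+2n+1}\in\vec S$ which the whole iteration keeps Suslin --- a contradiction. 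Making the proof of Lemma~\ref{nounwantedcodes} go through verbatim for the $\omega_1$-length iteration is the step I expect to be the main obstacle.

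Finally I would compute the complexity. For a real $e$ coding a countable structure, ``$e$ codes a well-founded model'' is $\Pi^1_1(e)$, while the remaining requirements on $M$ --- transitive model of $\ZFP+$``$\aleph_2$ exists'', $\omega_1^M=(\omega_1^L)^M$, $r,(x,y,m)\in M$, and $M\models\varphi((x,y,m))$ --- are arithmetic in $e$; hence $\forall M(\dots\rightarrow M\models\varphi((x,y,m)))$ is $\Pi^1_2$ in the parameters $r,(x,y,m)$, and $\Phi((x,y,m))=\exists r(\dots)$ is $\Sigma^1_3$. Therefore $\lnot\Phi((x,y,m))$ is $\Pi^1_3$, and since $(x,y)\in A_m$ is $\Pi^1_3$ uniformly in $m$ (the $A_m$ forming the universal $\Pi^1_3$ family), the relation $R$ is $\Pi^1_3$ uniformly in $m$. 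As $A_m$ ranges over all $\Pi^1_3$ subsets of $\omega^\omega\times\omega^\omega$, this yields the $\Pi^1_3$-uniformization property in $W[G_{\omega_1}]$.
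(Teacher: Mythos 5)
Your proposal is correct and follows essentially the same route as the paper: the paper's own proof consists only of the complexity computation (the right-hand side is of the form $\Pi^1_3\wedge\lnot\Sigma^1_3$), with the function property left implicit as an immediate consequence of the preceding dichotomy lemma and Lemma \ref{nounwantedcodes}, which are exactly the ingredients you make explicit. The one point you flag as a potential obstacle --- extending Lemma \ref{nounwantedcodes} from countable-length allowable iterations to the full $\omega_1$-length iteration --- is indeed left unaddressed by the paper as well, so your filling-in is a faithful (and somewhat more careful) elaboration rather than a different argument.
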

\begin{proof}
It suffices to note that the formula on the right and side is indeed $\Pi^1_3$. This is clear as it is of the form $\Pi^1_3 \land \lnot \Sigma^1_3$.
\end{proof}

Finally, the $\Delta^1_3$ well-order of reals follows form the fact that every pair of reals $(z_0,z_1)$ of reals got listed by $F$ and by case (3), we either coded $(z_0,z_1)$ or $(z_1,z_0)$ into $\vec{S}$. 
Thus we define in $W[G_{\omega_1}]$:
\[ z_0 < z_1 \Leftrightarrow \text{ $(z_0,z_1)$ is coded into $\vec{S}$ } \] 
the latter being a $\Sigma^1_3 (z_0,z_1)$-statement, which is what we want.

\section{Lifting the argument to $M_n$} 
We end this article with the note that our arguments presented here can be lifted to the canonical inner models with Woodin cardinals along the lines of \cite{Ho2}. Modulo the many technicalities dealt with there, one obtains 
\begin{theorem}
Let \( M_n \) be the canonical inner model with \( n \) Woodin cardinals. Then there exists a generic extension of \( M_n \) in which there is a \( \Delta^1_{n+3} \)-definable well-order of the reals, \( \mathsf{CH} \) holds, and the \( \Pi^1_{n+3} \)-uniformization property holds.
\end{theorem}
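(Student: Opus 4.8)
The plan is to rerun the entire construction with the fine-structural model \( M_n \) in place of \( L \) and then to recompute the projective complexity of the coding formula; the shift from index \( 3 \) to index \( n+3 \) comes solely from that recomputation.

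\textbf{First, the ground model over \( M_n \).} Since \( M_n \) is fine-structural it satisfies \( \GCH \) and \( \diamondsuit \), and it carries a definable global well-order together with a \( \Sigma^1_{n+2} \)-good well-order of its reals; these are the only features of \( L \) that the construction of \( W \) uses. One forms, exactly as before, the \( \diamondsuit \)-derived sequence \( \vec R = (R_\alpha : \alpha < \omega_1 \cdot \omega_1) \) of \( M_n \)-stationary, co-stationary sets together with the disjoint \( R \); then \( \forceQ^0 \) (the countably supported product of Jech's forcing), \( \forceQ^1 \) (branches through every member of \( \vec S \) but not through \( S_{-1} \)), and \( \forceQ^2 \) (coding \( \vec S \cup S_{-1} \) into \( \vec R \) by iterated club shooting), and sets \( W := M_n[\forceQ^0 \ast \forceQ^1 \ast \forceQ^2] \). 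Every preliminary lemma and all of Theorem~\ref{MainPropertiesOfW} transfer verbatim, since their proofs invoke only \( \diamondsuit \), the \( R \)-properness and \( \omega \)-distributivity of club shooting, the Fuchs--Hamkins theorem that Jech-generic trees are \( n \)-fold Suslin off the generic branch, and the preservation results Lemma~\ref{preservation of Suslin trees} and Lemma~\ref{a.d.coding preserves Suslin trees}, none of which uses \( V = L \). Hence \( W \) is an \( \omega \)-distributive, hence reals-preserving, extension of \( M_n \) in which \( \vec S \cup S_{-1} \) is a \( \Sigma_1(\{\omega_1\}) \)-definable independent sequence of trees, Suslin in the inner model \( M_n[\forceQ^0][\forceQ^2] \), with properties (1)--(3) of Theorem~\ref{MainPropertiesOfW}, and \( W \models \CH \).

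\textbf{Second, the coding machinery and the complexity count.} One repeats the coding machinery with ``\( L \)'' replaced by ``\( M_n \)'' and ``\( L[r] \)'' by ``\( M_n[r] \)'' everywhere. The device that replaces the bare ``countable transitive model of \( \ZFP \)'' is the correctness theory of \cite{Ho2}: any countable transitive \( M \) satisfying a sufficient fragment of \( \ZFC \) together with ``there are \( n \) Woodin cardinals'', which is \( (\omega_1+1) \)-iterable and has \( \omega_1^M = (\omega_1^{M_n})^M \), computes \( M_n \upharpoonright \omega_1^M \) — and therefore \( \vec S \upharpoonright \omega_1^M \) and the almost disjoint family \( D \upharpoonright \omega_1^M \) — correctly; this is provable in \( M_n \) by comparison of iterable mice, using that \( M_n \models \)``\( M_{n-1}^\#(z) \) exists for every real \( z \)''. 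The forcings \( \operatorname{Code}(x,y,m) = S_{-1} \ast \dot{\mathbb A}(\dot Y) \) are defined as before, with \( Y \subset \omega_1 \) now additionally coding a code for one such iterable \( M \); the resulting real \( r \) then satisfies the analogue of \( (\ast\ast\ast) \): there is a countable transitive \( (\omega_1+1) \)-iterable \( M \) with \( r,(x,y,m) \in M \) and \( \omega_1^M = (\omega_1^{M_n})^M \) whose internal \( M_n[r] \) believes there is an \( \aleph_1^M \)-sized transitive model witnessing that \( (x,y,m) \) is written into \( \vec S \) along the coding area coded by \( r \). Because the correctness facts for countable \( M_n \)-mice are \( \Sigma^1_{n+2} \) — exactly where \( L \)'s facts were \( \Sigma^1_2 \) — the coding formula \( \Phi(x,y,m) \), which prefixes an \( \exists r \) to the above, is \( \Sigma^1_{n+3} \); and by uniqueness of iterations it is equivalent to the corresponding statement with ``\( \exists M \)'' replaced by ``\( \forall M \)'', which shows, just as in the \( n=0 \) corollary, that ``\( (x,y,m) \) is not coded into \( \vec S \)'' is \( \Pi^1_{n+3} \). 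Genericity absoluteness for \( M_n \) shows that the (size \( \le \aleph_1 \), hence small) forcings used never disturb the iterability of \( M_n \) or of its relativizations \( M_n[r] \), so the correctness theory remains available in all the generic extensions that occur.

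\textbf{Third, allowability, the final iteration, and the main obstacle.} With \( W \) and the \( \Sigma^1_{n+3} \)-formula \( \Phi \) at hand, one defines \( 0 \)-allowable, \( \alpha \)-allowable, \( \infty \)-allowable and the stabilisation ordinal \( \alpha_0 \) exactly as before, using a \( \Pi^1_{n+3} \)-universal sequence \( (A_m : m \in \omega) \) in place of the \( \Pi^1_3 \)-universal one and the global well-order of \( M_n \) on nice names in place of that of \( L \). Lemma~\ref{nounwantedcodes}, Lemma~\ref{shrinkinglemma}, Lemma~\ref{alphaallowableclosedunderproducts}, Lemma~\ref{fvaluesremain2}, the non-emptiness of the class of \( \alpha \)-allowable forcings and its stabilisation at \( \alpha_0 \) go through unchanged, since their proofs use only \( R \)-properness, Suslin-tree preservation, independence of \( \vec S \), the pull-out Lemma~\ref{PullingOutATree}, the Knaster property of almost disjoint coding, and upward absoluteness of \( \Sigma^1_{n+3} \)-formulas to outer models. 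The one place where correctness re-enters is, exactly as in the \( n=0 \) proofs of Lemma~\ref{nounwantedcodes} and of the dichotomy lemma, the step ``if \( \Phi(x,y,m) \) holds with witness \( r \) but \( (x,y,m) \) was never coded by the iteration, then \( M_n[r] \) would see an \( \omega_1 \)-branch through some \( S_\alpha \) that cannot exist in the outer model'': here the (relativized, forcing-stable) correctness fact makes \( M_n[r] \subseteq W[G] \) compute \( \vec S \) correctly, and one then pulls the branch out as in Lemma~\ref{PullingOutATree}. Running the \( \omega_1 \)-iteration over \( W \) with rules (1)--(3) and a suitable bookkeeping \( F \in W \) yields \( W[G_{\omega_1}] \): \( \CH \) holds because each initial segment is \( \alpha_0 \)-allowable, hence \( \CH \)-preserving, and \( W \models \CH \); the dichotomy lemma shows \( y = f(m,x) \iff (x,y) \in A_m \wedge \lnot\Phi(x,y,m) \) uniformizes \( A_m \) by a \( \Pi^1_{n+3} \) relation; and \( z_0 < z_1 \iff (z_0,z_1) \text{ is coded into } \vec S \) is a \( \Delta^1_{n+3} \)-definable well-order of the reals. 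The genuinely new work beyond the bookkeeping is the inner-model-theoretic input of the second step — the correctness theorem for countable \( (\omega_1+1) \)-iterable \( M_n \)-premice, the fine count placing \( \Phi \) in \( \Sigma^1_{n+3} \) and its negation in \( \Pi^1_{n+3} \), and the verification that the small forcings preserve this iterability — which is exactly ``the many technicalities dealt with'' in \cite{Ho2} and is imported rather than redone; the remainder of the argument is the forcing construction of the earlier sections read over \( M_n \) in place of \( L \).
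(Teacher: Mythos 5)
Your proposal is correct and follows exactly the route the paper intends: the paper's own ``proof'' of this theorem is only the remark that the construction lifts to $M_n$ along the lines of \cite{Ho2}, and your sketch supplies precisely that lifting — replacing $L$ by $M_n$, invoking the correctness theory of countable iterable $M_n$-mice in place of countable transitive $\ZFP$-models, and recomputing the complexity of the coding formula from $\Sigma^1_3/\Pi^1_3$ to $\Sigma^1_{n+3}/\Pi^1_{n+3}$. In fact your write-up is more detailed than the paper's, which defers all of the inner-model-theoretic technicalities to \cite{Ho2} exactly as you do.
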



\begin{thebibliography}{12}

\bibitem{Abraham}
U. Abraham \textit{ Proper Forcing}, Handbook of Set Theory Vol.1. Springer

\bibitem{Addison}  J. Addison \textit{Some consequences of the axiom of constructibility}, Fundamenta Mathematica, vol. 46 (1959), pp. 337–357.


\bibitem{BHK} J. Baumgartner, L. Harrington and E. Kleinberg \textit{Adding a closed unbounded set.}  Journal of Symbolic Logic, 41(2), pp. 481-482, 1976.


\bibitem{David}
R. David \textit{A very absolute $\Pi^1_2$-real singleton}. Annals of Mathematical Logic 23, pp. 101-120, 1982.

\bibitem{FS}
S. D. Friedman and D. Schrittesser \textit{Projective Measure without Projective Baire}.
Memoirs of the American Mathematical Society vol. 267, 1298. 2020.

\bibitem{FH}
G. Fuchs and J. Hamkins \textit{Degrees of rigidity for Souslin Trees.}
Journal of Symbolic Logic 74(2), pp. 423-454, 2009.

\bibitem{Goldstern} M. Goldstern \textit{A Taste of Proper Forcing.} Di Prisco, Carlos Augusto (ed.) et al., Set theory: techniques and applications. Proceedings of the conferences, Curaçao, Netherlands Antilles, June 26–30, 1995 and Barcelona, Spain, June 10–14, 1996. Dordrecht: Kluwer Academic Publishers. 71-82, 1998.


    
\bibitem{Ho} S. Hoffelner \textit{$\hbox{NS}_{\omega_1}$ $\Delta_1$-definable and saturated.}  Journal of Symbolic Logic 86 (1), pp. 25 - 59, 2021.

\bibitem{Ho1} S. Hoffelner \textit{Forcing the $\Sigma^1_3$-separation property}. Journal of Mathematical Logic 22, No. 2, 2022.



\bibitem{Ho3} S. Hoffelner \textit{Forcing the $\Pi^1_3$-reduction property and a failure of $\Pi^1_3$-uniformization property},
Annals of Pure and Applied Logic,
Volume 174, Issue 8,
2023.


\bibitem{Ho2} S. Hoffelner \textit{Forcing the $\Pi^1_n$-uniformization property}, submitted. Prerprint available on  Arxiv.

\bibitem{Ho4} S. Hoffelner \textit{A Failure of $\Pi^1_{n+3}$-Reduction in the Presence of $\Sigma^1_{n+3}$-Separation}, submitted. Preprint available on Arxiv 

\bibitem{Ho5} S. Hoffelner \textit{The Global $\Sigma^1_{n+2}$-Uniformization Property and $\mathsf{BPFA}$}. Accepted at Advances in Mathematics.



\bibitem{Jech} T. Jech \textit{Set Theory. Third Millenium Edition.} Springer 2006.

\bibitem{JensenSolovay}
R. Jensen and R. Solovay \textit{Some Applications of Almost Disjoint Sets.}
Studies in Logic and the Foundations of Mathematics
Volume 59, pp. 84-104, 1970.



\bibitem{Kechris} 
A. Kechris 
\textit{Classical Descriptive Set Theory}.  
Springer 1995.




 
 \bibitem{MS} D. Martin and J. Steel \textit{A Proof of Projective Determinacy.} Journal of the American Mathematical Society (2), pp.71-125, 1989.
 
 \bibitem{Miyamoto2} T. Miyamoto {\em $\omega_1$-Suslin trees under countable support iterations.} Fundamenta Mathematicae,
vol. 143 (1993), pp. 257–261.



\bibitem{Moschovakis}
Y. Moschovakis \textit{Descriptive Set Theory.} Mathematical Surveys and Monographs 155, AMS.


\end{thebibliography}
\end{document}